\documentclass[11pt]{article}

\usepackage{amsmath, amsthm}
\usepackage{amssymb}
\usepackage{eucal}
\usepackage{hyperref} 
\usepackage{mathrsfs}
\usepackage{scrextend}

\textwidth 6.6in \textheight 8.6in \footskip 0.3in

\oddsidemargin 0in \evensidemargin 0in

\newtheorem{theorem}{Theorem}[section]
\newtheorem{lemma}[theorem]{Lemma}

\newtheorem{question}[theorem]{Question}
\newtheorem{theoremA}{Theorem}

\theoremstyle{definition}
\newtheorem{example}[theorem]{Example}
\newtheorem{notation}[theorem]{Notation}
\newtheorem{remark}[theorem]{Remark}

\begin{document}
	
	\baselineskip 16pt

	\title{Criteria for supersolvability of saturated fusion systems}
	
	\author{Fawaz Aseeri\\
		{\small Department of Mathematical Sciences, College of Applied Sciences, Umm Al-Qura University,}\\ {\small Makkah 21955, Saudi Arabia}\\
		{\small E-mail:
			fiaseeri@uqu.edu.sa}\\ \\
		{Julian Kaspczyk\footnote{Corresponding author} \footnote{At the date of submission, this author was not anymore affiliated with the Technische Universität Dresden. However, the bulk of the work presented here was done when he still was a postdoc at this university.}}\\
		{\small Institut für Algebra, Fakultät Mathematik, Technische Universität Dresden,}\\
		{\small 01069 Dresden, Germany}\\
		{\small E-mail: julian.kaspczyk@gmail.com}}
	
	\date{}
	\maketitle

	\begin{abstract} 
	Let $p$ be a prime number. A saturated fusion system $\mathcal{F}$ on a finite $p$-group $S$ is said to be \textit{supersolvable} if there is a series $1 = S_0 \le S_1 \le \dots \le S_m = S$ of subgroups of $S$ such that $S_i$ is strongly $\mathcal{F}$-closed for all $0 \le i \le m$ and such that $S_{i+1}/S_i$ is cyclic for all $0 \le i < m$. We prove some criteria that ensure that a saturated fusion system $\mathcal{F}$ on a finite $p$-group $S$ is supersolvable provided that certain subgroups of $S$ are abelian and weakly $\mathcal{F}$-closed. Our results can be regarded as generalizations of purely group-theoretic results of Asaad \cite{Asaad}. 
	\end{abstract}
	
	\footnotetext{Keywords: fusion systems, supersolvable, weakly closed, pronormal, weakly pronormal}

	\footnotetext{Mathematics Subject Classification (2020): 20D10, 20D20.}
	\let\thefootnote\thefootnoteorig

	\section{Introduction}

	In finite group theory, the term “fusion” refers to conjugacy relations between $p$-elements and $p$-subgroups. The study of fusion in finite groups has a long history, and many results concerning fusion in finite groups had a significant impact on finite group theory. Some well-known examples are Burnside's fusion theorem \cite[Lemma 5.12]{Isaacs}, Frobenius' $p$-nilpotency criterion \cite[Theorem 5.26]{Isaacs}, Alperin's fusion theorem \cite{Alperin} and Glauberman's $Z^{*}$-theorem \cite{Glauberman}.
	
	A modern approach to study problems concerning fusion in finite groups is the theory of fusion systems. The standard examples of fusion systems are the fusion categories of finite groups over $p$-subgroups. Given a prime number $p$, a finite group $G$ and a $p$-subgroup $S$ of $G$, the \textit{fusion category} of $G$ over $S$ is defined to be the category $\mathcal{F}_S(G)$ given as follows: The objects of $\mathcal{F}_S(G)$ are the subgroups of $S$, the morphisms in $\mathcal{F}_S(G)$ are the group homomorphisms between subgroups of $S$ induced by conjugation in $G$, and the composition of morphisms in $\mathcal{F}_S(G)$ is the usual composition of group homomorphisms. Abstract fusion systems can be regarded as a generalization of this concept. Given a prime number $p$ and a finite $p$-group $S$, a \textit{fusion system} on $S$ is a category $\mathcal{F}$ whose objects are the subgroups of $S$ and whose morphisms behave as if they are induced by conjugation inside a finite group containing $S$ as a $p$-subgroup (see \cite[Part I, Definition 2.1]{AKO}). A fusion system is said to be \textit{saturated} if it satisfies certain additional axioms (see \cite[Part I, Definition 2.2]{AKO}). Any fusion category of a finite group over a Sylow subgroup is saturated (see \cite[Part I, Theorem 2.3]{AKO}), but not every saturated fusion system appears as the fusion category of a finite group over a Sylow subgroup (see \cite[Part III, Section 6]{AKO}). If $S$ is a Sylow $p$-subgroup of a finite group $G$ for some prime $p$, then we refer to $\mathcal{F}_S(G)$ as the \textit{$p$-fusion system} of $G$. 
	
	The book \cite{AKO} provides a detailed introduction to the theory of fusion systems, and the reader is asked to consult that book for any definitions on fusion systems we do not explain here. For unfamiliar definitions on groups, the reader is referred to \cite{Huppert, Isaacs}. 
	
	
	
	This paper is concerned with the concept of a supersolvable saturated fusion system introduced in \cite{Su}. Given a prime $p$ and a finite $p$-group $S$, a saturated fusion system $\mathcal{F}$ on $S$ is said to be \textit{supersolvable} if there is a series $1 = S_0 \le S_1 \le \dots \le S_m = S$ of subgroups of $S$ such that $S_i$ is strongly $\mathcal{F}$-closed for all $0 \le i \le m$ and such that $S_{i+1}/S_i$ is cyclic for all $0 \le i < m$ (see \cite[Definition 1.2]{Su}). By \cite[Proposition 1.3]{Su}, for any prime $p$, the supersolvable saturated fusion systems on finite $p$-groups are precisely the $p$-fusion systems of supersolvable finite groups, and they are also precisely the $p$-fusion systems of $p$-supersolvable finite groups. After their introduction in \cite{Su}, supersolvable saturated fusion systems (and related concepts) were further studied in the papers \cite{BB, ShenZhang}.  
	
	The goal of this paper is to obtain criteria for supersolvability of saturated fusion systems. Our results are inspired from a current line of research in finite group theory. Namely, there is currently very active research on subgroup embedding properties, and a problem of particular interest is to study the structure of a finite group $G$ under the assumption that some given subgroups of $G$ satisfy a given embedding property. In this context, many results of the following form were obtained: Given a fixed prime $p$ and a finite group $G$, it is assumed that all (or at least sufficiently many) $p$-subgroups of $G$ with some fixed order satisfy a certain embedding property. Sometimes, a number of additional conditions are assumed, and the conclusion usually is that $G$ is $p$-supersolvable or $p$-nilpotent. As an example, we mention a result of Berkovich and Isaacs. Given a prime $p$, a natural number $e \ge 3$ and a finite group $G$ with a noncyclic Sylow $p$-subgroup of order exceeding $p^e$, they proved that $G$ is $p$-supersolvable if any noncyclic subgroup of $G$ with order $p^e$ is normal in $G$ (see \cite[Theorem C]{BerkovichIsaacs}). Other results of this kind were obtained, for example, in the papers \cite{Asaad_c_supplemented, Asaad, GuoIsaacs, Kaspczyk, LiuYu, Yu}.
	
	
	Our principal motivation is to prove supersolvability criteria of a similar spirit for saturated fusion systems. Indeed, given a prime $p$ and a saturated fusion system $\mathcal{F}$ on a finite $p$-group $S$, we will prove criteria ensuring that $\mathcal{F}$ is supersolvable provided that all subgroups of $S$ with some fixed order are “suitably embedded” in $S$ with respect to $\mathcal{F}$. More precisely, we will assume that all subgroups of $S$ with some fixed order are weakly $\mathcal{F}$-closed and that sufficiently many of them are abelian.
	
	Our results generalize some purely group-theoretic results of Asaad \cite{Asaad}, and before stating our results, we consider the corresponding results from \cite{Asaad}. 
	
	First, let us recall some definitions. A subgroup $H$ of a group $G$ is said to be \textit{pronormal} in $G$ if $H$ and $H^g$ are conjugate in $\langle H, H^g \rangle$ for each $g \in G$. By \cite[Theorem 4.3]{LiuYu}, if $S$ is a Sylow $p$-subgroup of a finite group $G$ for some prime $p$, then a subgroup $Q$ of $S$ is pronormal in $G$ if and only if $Q$ is weakly $\mathcal{F}_S(G)$-closed. Following Asaad \cite{Asaad}, we say that a subgroup $H$ of a group $G$ is \textit{weakly pronormal} in $G$ if there is a subgroup $K$ of $G$ such that $G = HK$ and such that $H \cap K$ is pronormal in $G$. Note that, if $S$ is a Sylow $p$-subgroup of a finite group $G$ for some prime $p$, a subgroup $Q$ of $S$ is weakly pronormal in $G$ if and only if there is a subgroup $K$ of $G$ such that $G = QK$ and such that $Q \cap K$ is weakly $\mathcal{F}_S(G)$-closed.
	
	The following theorem is one of the main results of \cite{Asaad}. 
	
	\begin{theorem}
		\label{theorem_asaad_one} 
	(\cite[Theorem 1.3]{Asaad}) Let $G$ be a nontrivial finite group, $p$ be the smallest prime dividing $|G|$ and $S$ be a noncyclic Sylow $p$-subgroup of $G$. Suppose that there is a subgroup $D$ of $S$ with $1 < D < S$ such that any subgroup of $S$ with order $|D|$ or $p|D|$ is abelian and weakly pronormal in $G$. Then $G$ is $p$-nilpotent. 
	\end{theorem}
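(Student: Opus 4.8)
My plan is to argue by contradiction through a minimal counterexample, the decisive point being that once a Sylow $p$-subgroup is normal, weak pronormality of a subgroup of $S$ collapses to outright normality. First I would record the inheritance I need: if $H\le M\le G$ and $H$ is weakly pronormal in $G$, then writing $G=HK$ with $H\cap K$ pronormal in $G$, the modular law gives $M=H(M\cap K)$ while $H\cap(M\cap K)=H\cap K$ is pronormal in $M$; hence $H$ is weakly pronormal in $M$. As abelianness is obviously inherited, every subgroup $M$ with $S\le M<G$ again satisfies the hypotheses with the same $S$ and $D$ (here $p$ remains the least prime divisor because $S\le M$), so by minimality every such $M$ is $p$-nilpotent. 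Combining this with the classical fact that a finite group whose least prime divisor is $p$ and whose Sylow $p$-subgroup is cyclic has a normal $p$-complement, and with the standard normal $p$-complement theorems (Frobenius when $p=2$, Glauberman--Thompson when $p$ is odd) applied to normalizers of characteristic subgroups of $S$, I expect to reduce to the case in which $G$ is minimal non-$p$-nilpotent. It\^o's structure theorem for such groups then gives $G=S\rtimes Q$ with $S=O_p(G)\trianglelefteq G$ noncyclic, $Q$ a cyclic $q$-group for a prime $q>p$, $V:=S/\Phi(S)$ an irreducible $Q$-module, and $\exp(S)\le p$ if $p$ is odd while $\exp(S)\le 4$ if $p=2$.

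The heart of the argument is then quick. Since $S\trianglelefteq G$, every $G$-conjugate of a subgroup $L\le S$ again lies in $S$, so $L$ is weakly $\mathcal F_S(G)$-closed if and only if $L\trianglelefteq G$; combined with the equivalence recalled above between pronormality and weak $\mathcal F_S(G)$-closure, this shows that a subgroup of $S$ is pronormal in $G$ exactly when it is normal in $G$. Moreover, because $[S,S]\le\Phi(S)$, any $G$-normal $L\le S$ yields a $Q$-submodule $L\Phi(S)/\Phi(S)$ of the irreducible module $V$, so $L\le\Phi(S)$ or $L=S$. Now suppose $H\le S$ has order $|D|$ or $p|D|$ with $H\ne S$ and $H\not\le\Phi(S)$. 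Weak pronormality gives $K\le G$ with $G=HK$ and $H\cap K\trianglelefteq G$; as $|H\cap K|<|S|$ the dichotomy forces $H\cap K\le\Phi(S)$. Writing $T:=S\cap K$, the modular law gives $S=HT$ with $H\cap T=H\cap K\le\Phi(S)$; since $|G:K|$ is a $p$-power, $K$ contains a Sylow $q$-subgroup $Q^{*}$ of $G$, and $Q^{*}$ normalizes both $T\trianglelefteq K$ and $\Phi(S)$, so $T\Phi(S)/\Phi(S)$ is $Q^{*}$-invariant in $V$. As $V$ is irreducible for every conjugate of $Q$, this gives $T\le\Phi(S)$ or $T=S$. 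If $T\le\Phi(S)$ then $S=HT\le H\Phi(S)$, whence $H=S$ by the Frattini property, a contradiction; if $T=S$ then $K\supseteq\langle S,Q^{*}\rangle=G$, so $H\cap K=H\trianglelefteq G$ with $H\ne S$ and $H\not\le\Phi(S)$, contradicting the dichotomy.

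It remains to produce such an $H$, and this is where both orders $|D|$ and $p|D|$ and the noncyclicity of $S$ enter. Since $D<S$ we have $p|D|\le|S|$. Choosing $g\in S\setminus\Phi(S)$, the exponent bound gives $|g|\le p$ if $p$ is odd and $|g|\le 4$ if $p=2$, so in all cases $|g|\le p|D|$. If $|g|\le|D|$ I extend $\langle g\rangle$ inside $S$ to a subgroup $H$ of order $|D|$; then $H\not\le\Phi(S)$ and $H\ne S$ because $|D|<|S|$. The only remaining case is $p=2$, $|D|=2$, $|g|=4$, where I instead take $H=\langle g\rangle$ of order $p|D|=4$, which is proper precisely because $S$ is noncyclic, so $|S|\ge 8$. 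In every case $H$ is a proper subgroup of order $|D|$ or $p|D|$ lying outside $\Phi(S)$, the previous paragraph applies, and the contradiction is complete. I expect the genuine difficulty to lie entirely in the reduction of the first paragraph rather than in this endgame: verifying that the hypotheses descend far enough to place $G$ in the minimal non-$p$-nilpotent configuration — in particular, disposing of the proper subgroups whose Sylow $p$-subgroup is strictly smaller than $S$, where the smallest-prime hypothesis is indispensable — is the delicate step, whereas once $S$ is normal the collapse of weak pronormality to normality makes the contradiction essentially immediate.
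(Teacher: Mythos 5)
Your endgame is sound: once $G$ is known to be a minimal non-$p$-nilpotent group, so that It\^o's theorem gives $G=S\rtimes Q$ with $S=O_p(G)$, $Q$ a cyclic $q$-group, $S/\Phi(S)$ irreducible and the stated exponent bounds, your argument (pronormality of subgroups of the normal Sylow subgroup collapsing to normality, the ``contained in $\Phi(S)$ or equal to $S$'' dichotomy, the modular-law analysis of a supplement $K$, and the construction of a proper $H\not\le\Phi(S)$ of order $|D|$ or $p|D|$) is correct and complete. The genuine gap is the reduction you defer with ``I expect'': that reduction is the actual substance of the theorem, and it cannot be completed with the tools you name. Concretely, your inheritance lemma plus minimality yields $p$-nilpotency only of those proper subgroups whose Sylow $p$-subgroup (conjugated into $S$) is cyclic (Burnside) or has order at least $p|D|$ (take $D'$ of order $|D|$). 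For a proper subgroup whose Sylow $p$-subgroup is noncyclic of order at most $|D|$, the inductive hypothesis is vacuous: any candidate $D'$ would require control of the subgroups of order $|D'|$ and $p|D'|\le |D|$, whereas your hypothesis controls only the two orders $|D|$ and $p|D|$. Hence ``every proper subgroup of $G$ is $p$-nilpotent'' is not established, and the appeal to It\^o collapses.

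The named $p$-complement theorems do not repair this. Frobenius's criterion (your tool for $p=2$) requires $p$-nilpotency of \emph{every} $p$-local subgroup $N_G(Q)$, including those whose Sylow $p$-subgroups are small and noncyclic --- exactly the subgroups induction cannot reach. Glauberman--Thompson (for odd $p$) applied to $N_G(Z(J(S)))\supseteq S$ does combine with your first paragraph, but in a minimal counterexample it yields only $Z(J(S))\trianglelefteq G$, i.e.\ $O_p(G)\ne 1$; this is far from $S\trianglelefteq G$, and one cannot finish through $G/O_p(G)$, since $p$-nilpotency does not lift along quotients by normal $p$-subgroups (consider $A_4$) and the hypotheses do not visibly descend to such quotients. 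The missing mechanism is precisely what the paper supplies in proving the stronger Theorem \ref{first_main_result}, from which Theorem \ref{theorem_asaad_one} is then deduced via \cite[Theorem 1.9 (a)]{Su}: because every subgroup of $S$ of order $p|D|$ is abelian, every $\mathcal{F}_S(G)$-centric --- in particular every essential --- subgroup $Q$ satisfies $|Q|\ge p|D|$ (an abelian overgroup of order $p|D|$ would lie in $C_S(Q)\le Q$), so Alperin-type control of fusion (Lemma \ref{criterion_FS_one}, Lemma \ref{criterion_FS_two}) confines attention to normalizers of subgroups of order at least $p|D|$, where the induction does apply; the residual case of a $G$-normal essential subgroup is handled by converting weak pronormality into $c$-supplementation inside $O_p(G)$ and invoking the cyclicity results behind Lemma \ref{lemma_skiba} to get $O_p(G)\le Z_{\mathcal{U}}(G)$. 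Without some version of this centricity/Alperin argument, the small noncyclic subgroups cannot be disposed of, and your first paragraph does not go through as stated.
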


Note that, because of Burnside's $p$-nilpotency criterion \cite[Corollary 5.14]{Isaacs}, the condition in Theorem \ref{theorem_asaad_two} that $S$ is noncyclic is not necessary. 

We will prove the following generalization of Theorem \ref{theorem_asaad_one}.

\begin{theoremA}
	\label{first_main_result} 
	Let $G$ be a finite group, $p$ be a prime and $S$ be a Sylow $p$-subgroup of $G$. Suppose that there is a subgroup $D$ of $S$ with $1 < D < S$ such that any subgroup of $S$ with order $|D|$ or $p|D|$ is abelian and weakly pronormal in $G$. Then $\mathcal{F}_S(G)$ is supersolvable.  
\end{theoremA}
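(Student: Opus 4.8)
The plan is to reduce the statement to a purely group-theoretic assertion about $G$ and then invoke \cite[Proposition 1.3]{Su}. Concretely, I would show that $G$ is $p$-supersolvable; since $\mathcal{F}_S(G)$ is then the $p$-fusion system of a $p$-supersolvable group, \cite[Proposition 1.3]{Su} immediately gives that $\mathcal{F}_S(G)$ is supersolvable. Throughout I would translate the hypotheses using \cite[Theorem 4.3]{LiuYu}: a subgroup $Q\le S$ is pronormal in $G$ if and only if it is weakly $\mathcal{F}_S(G)$-closed, so that "weakly pronormal" means $Q$ admits a supplement $K$ (i.e. $G=QK$) with $Q\cap K$ weakly $\mathcal{F}_S(G)$-closed. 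Writing $|D|=p^{d}$, the hypothesis becomes: $1\le d$, $p^{d+1}\le|S|$, and every subgroup of $S$ of order $p^{d}$ or $p^{d+1}$ is abelian and weakly pronormal. The proof would proceed by induction on $|G|$.

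The enabling observation for the induction is a clean order-bookkeeping fact. Passing to $\overline{G}=G/V$ for a normal subgroup $V\le S$ of order $p$ shifts the order window down by one: the subgroups of $\overline{S}=S/V$ of order $p^{d-1}$ and $p^{d}$ are exactly the images of the subgroups of $S$ of order $p^{d}$ and $p^{d+1}$ containing $V$, and these inherit abelianness and weak pronormality in $\overline{G}$. Hence, once a normal subgroup $V\trianglelefteq G$ of order $p$ is produced, $\overline{G}$ satisfies the hypothesis with parameter $d-1$; when $d\ge 2$ the inductive hypothesis makes $\overline{G}$ $p$-supersolvable, and as $V$ is a cyclic chief factor, $G$ is $p$-supersolvable. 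Before this I would reduce to $O_{p'}(G)=1$, using that $\mathcal{F}_S(G)\cong\mathcal{F}_{\overline{S}}(G/O_{p'}(G))$ and that $p$-supersolvability is insensitive to $O_{p'}(G)$; the relevant subgroups survive this quotient with their orders, abelianness and weak pronormality intact. The case $d=1$ is a genuine base case (there $\overline{D}$ would be trivial): here all subgroups of order $p$ and $p^{2}$ are weakly pronormal, and I would dispatch it by a separate, more global argument exploiting how restrictive weak pronormality of all subgroups of order $p$ is.

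Everything thus reduces to the following, under $O_{p'}(G)=1$: produce a normal subgroup of order $p$. The clean mechanism is a Frattini argument for pronormal subgroups. If $W\le S$ is weakly $\mathcal{F}_S(G)$-closed and $W$ lies in an abelian normal subgroup $A\trianglelefteq G$, then $G=A\,N_G(W)$; since $A$ is abelian we have $W^{a}=W$ for all $a\in A$, and therefore $W^{g}=W$ for all $g\in G$, i.e. $W\trianglelefteq G$. Consequently, if $V$ is a minimal normal $p$-subgroup of $G$ and some subgroup $W$ with $1<W<V$ is pronormal, then $W\trianglelefteq G$ contradicts minimality, forcing $|V|=p$. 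To feed weak pronormality into this, I would take a subgroup $W$ in the order window contained in $V$ (when $|V|\ge|D|$), so that its supplement gives $G=WK$ with $W\cap K$ pronormal and $W\cap K\le V$; then $W\cap K\trianglelefteq G$, whence $W\cap K\in\{1,V\}$ by minimality.

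The hard part will be the two points where "weakly pronormal" is strictly weaker than "pronormal," together with ruling out nonabelian composition factors. In the minimal normal $p$-subgroup analysis, the supplement only yields that $W\cap K$ is normal, and this may be trivial (a genuine complement), so I expect to need the abelian hypothesis on \emph{both} orders $p^{d}$ and $p^{d+1}$ and a counting argument inside the irreducible $\mathbb{F}_pG$-module $V$ to force $\dim V=1$; the subcase $|V|<|D|$ needs separate handling via subgroups of the window \emph{containing} $V$. More seriously, I must exclude $O_p(G)=1$, where $F^{*}(G)$ is a product of nonabelian simple groups: here I would use that every subgroup of $S$ of order $p^{d}$ or $p^{d+1}$ is abelian — recalling that minimal nonabelian $p$-groups have order at least $p^{3}$ — to sharply restrict the Sylow $p$-structure of the simple factors, and combine this with weak pronormality to force a normal $p$-complement in the relevant section, contradicting $O_{p'}(G)=1$ and simplicity. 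Establishing $O_p(G)\neq 1$ (effectively, the $p$-solvability needed to start the cyclic series) from the abelian-plus-weakly-pronormal hypotheses is the step I expect to be the most delicate and technical.
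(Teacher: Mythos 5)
Your strategy fails at its foundation: you propose to prove that $G$ is $p$-supersolvable and then invoke \cite[Proposition 1.3]{Su}, but the hypotheses of Theorem \ref{first_main_result} do \emph{not} imply that $G$ is $p$-supersolvable. The paper notes this immediately after the statement of the theorem: take $G = A_5 \times A_5$, $p = 5$, $S \cong C_5 \times C_5$, and $D$ any subgroup of order $5$. Then every subgroup of $S$ of order $|D| = 5$, as well as $S$ itself (the unique subgroup of order $p|D| = 25$), is abelian and complemented in $G$, hence weakly pronormal; yet $G$ is not $5$-supersolvable. The subtlety is that \cite[Proposition 1.3]{Su} only says a supersolvable saturated fusion system is realized by \emph{some} $p$-supersolvable group; it does not say that every group realizing a supersolvable fusion system is $p$-supersolvable. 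So supersolvability of $\mathcal{F}_S(G)$ is a strictly weaker conclusion than $p$-supersolvability of $G$, and it is the only conclusion available here. This same example demolishes precisely the step you flagged as "most delicate": in $A_5\times A_5$ one has $O_{p'}(G)=1$, $O_p(G)=1$, and $F^*(G)$ a product of nonabelian simple groups, with all hypotheses satisfied, so no argument can exclude this configuration or "force a normal $p$-complement" there. Your plan of producing a normal subgroup of order $p$ and inducting on cyclic chief factors can never get started in this case.

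A correct proof must argue at the level of the fusion system rather than the group, and this is what the paper does. In its minimal counterexample argument, either no member of $\mathcal{E}_{\mathcal{F}}^{*}$ is normal in $G$, in which case each normalizer subsystem $N_{\mathcal{F}}(Q)$ is supersolvable by minimality and Lemma \ref{criterion_FS_one} forces $\mathcal{F}$ itself to be supersolvable \emph{without ever producing a normal $p$-subgroup of $G$} (this is exactly the branch your approach cannot reach, and it is where the $A_5 \times A_5$ example lives); or some $\mathcal{F}$-essential subgroup is normal in $G$, forcing $|O_p(G)| \ge p|D|$. In the latter case, weak pronormality of subgroups inside $O_p(G)$ upgrades to $c$-supplementation (since a pronormal subgroup of $O_p(G)$, being weakly $\mathcal{F}$-closed with all $G$-conjugates in $S$, is normal in $G$), Asaad's $c$-supplementation theorem yields $O_p(G) \le Z_{\mathcal{U}}(G)$, and the fusion-theoretic quotient criterion of Lemma \ref{criterion_FS_two} concludes. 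Your Frattini-type observation about pronormal subgroups inside abelian normal subgroups is sound and is in fact close in spirit to how the paper handles the $O_p(G)$ branch, but without the first branch your induction is structurally incapable of proving the theorem.
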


By \cite[Theorem 1.9 (a)]{Su}, if $G$ is a finite group and if $S$ is a Sylow $p$-subgroup of $G$ for some prime $p$ with $(|G|,p-1) = 1$, then $G$ is $p$-nilpotent provided that $\mathcal{F}_S(G)$ is supersolvable. Of course, the condition $(|G|,p-1) = 1$ is satisfied when $p$ is the smallest prime divisor of $|G|$, and hence, Theorem \ref{theorem_asaad_one} is covered by Theorem \ref{first_main_result}.

When the hypotheses of Theorem \ref{first_main_result} are satisfied, it is not necessarily true that $G$ is $p$-supersolvable. For example, let $G := A_5 \times A_5$ and $S$ be a Sylow $5$-subgroup of $G$. Then $S \cong C_5 \times C_5$ is abelian. Also, any subgroup of $S$ with order $5$ and $S$ itself are complemented in $G$ and hence weakly pronormal in $G$. However, $G$ is not $5$-supersolvable.



Our further results are supersolvability criteria for abstract saturated fusion systems, and they are motivated by the following result of Asaad \cite{Asaad}. 

\begin{theorem}
\label{theorem_asaad_two} (\cite[Corollary 3.1]{Asaad}) Let $G$ be a nontrivial finite group, $p$ be the smallest prime dividing $|G|$ and $S$ be a noncyclic Sylow $p$-subgroup of $G$. Suppose that there is a subgroup $D$ of $S$ with $1 < D < S$ such that any subgroup of $S$ with order $|D|$ is abelian and pronormal in $G$. If $S$ is a nonabelian $2$-group, suppose moreover that any subgroup of $S$ with order $2|D|$ is abelian and pronormal in $G$. Then $G$ is $p$-nilpotent. 
\end{theorem}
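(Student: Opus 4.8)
Write $\mathcal{F} := \mathcal{F}_S(G)$. The plan is to derive the statement from Theorem~\ref{theorem_asaad_one} (equivalently, from Theorem~\ref{first_main_result} combined with \cite[Theorem 1.9(a)]{Su}). Since $p$ is the smallest prime dividing $|G|$, every prime $q \mid |G|$ satisfies $q \ge p > p-1$, so $(|G|, p-1) = 1$; hence supersolvability of $\mathcal{F}$ already forces $p$-nilpotency. The first observation is that pronormality is a special case of weak pronormality: taking $K = G$ in the definition shows that any pronormal subgroup is weakly pronormal. Consequently the order-$|D|$ hypothesis here immediately supplies the order-$|D|$ hypothesis of Theorem~\ref{theorem_asaad_one}, and in the case where $S$ is a nonabelian $2$-group the extra assumption supplies the order-$p|D| = 2|D|$ hypothesis as well, so there Theorem~\ref{theorem_asaad_one} applies directly. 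It remains to treat the case in which $p$ is odd, or $p = 2$ and $S$ is abelian.

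In this remaining case the goal is to verify the order-$p|D|$ hypothesis of Theorem~\ref{theorem_asaad_one} \emph{automatically}, i.e.\ to show that every subgroup of $S$ of order $p|D|$ is abelian and weakly pronormal. A useful preliminary is that a weakly $\mathcal{F}$-closed subgroup is normal in $S$: conjugation by an element of $S$ is an $\mathcal{F}$-morphism, so weak closure forces $S$-invariance. By \cite[Theorem 4.3]{LiuYu} the hypothesis thus says that every subgroup of order $|D|$ is abelian and normal in $S$. The weak-closure half of the claim at order $p|D|$ is the easier part: if $R \le S$ has order $p|D|$ and is noncyclic, then $R$ is generated by its maximal subgroups, each of order $|D|$ and weakly closed; since any $\mathcal{F}$-isomorphism $R \to R'$ with $R' \le S$ fixes each such maximal subgroup, it fixes $R$, so $R$ is weakly closed, and the cyclic case is disposed of using that $S$ is noncyclic.

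The crux is the abelian half: one must show that, for $p$ odd, the global condition that every subgroup of order $|D|$ is abelian and normal in $S$ forces every subgroup of order $p|D|$ (when proper in $S$) to be abelian. Here I would argue by contradiction through the Miller--Moreno classification: a nonabelian $R \le S$ of order $p|D|$ all of whose maximal subgroups are abelian is minimal nonabelian, hence $2$-generated with $|R : Z(R)| = p^2$, and this structure is incompatible, for odd $p$, with all subgroups of order $|D|$ being normal in $S$; this is where one invokes the structure theory of $p$-groups in which all subgroups of a fixed order are normal. With both halves in place, Theorem~\ref{theorem_asaad_one} completes the argument.

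I expect the main obstacle to be precisely this odd-$p$ structural step, together with two boundary phenomena it must respect. First, it must explain why $p = 2$ with $S$ nonabelian is genuinely exceptional: for instance $S = D_8$ with $|D| = 4$ has all subgroups of order $4$ abelian and normal, yet the subgroup of order $2|D| = 8$, namely $S$ itself, is nonabelian --- exactly the configuration that forces the additional order-$2|D|$ hypothesis in that case. Second, the sub-case $p|D| = |S|$ (that is, $D$ maximal in $S$) lies outside the reach of Theorem~\ref{theorem_asaad_one}, since there the only subgroup of order $p|D|$ is $S$, which need not be abelian (it is either abelian or minimal nonabelian); for odd $p$ with $S$ minimal nonabelian this case would have to be settled directly by a transfer/fusion argument in the spirit of \cite{Asaad}, using that $p$ is the smallest prime divisor of $|G|$.
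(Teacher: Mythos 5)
You should first be aware that the paper does not prove this statement at all: it is quoted from \cite[Corollary 3.1]{Asaad}, and inside the paper it only reappears as a special case of the new results (Theorem~\ref{second_main_result} for $p=2$, Theorem~\ref{fourth_main_result} for odd $p$, combined with \cite[Theorem 1.9 (a)]{Su}). Your plan is instead to deduce it from Theorem~\ref{theorem_asaad_one} by showing that, for odd $p$, the order-$|D|$ hypothesis automatically upgrades to the order-$p|D|$ hypothesis. The opening steps are fine: pronormal does imply weakly pronormal (take $K = G$), and the nonabelian $2$-group case does follow directly from Theorem~\ref{theorem_asaad_one}. But your ``crux'' step is not merely unproven --- it is false, so the reduction cannot be repaired. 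Counterexample for odd $p$: let $S = \langle a,b,z \rangle$ with $a^p = b^p = z^{p^2} = 1$, $[a,b] = z^p$, $z$ central; this is the central product of the extraspecial group $H = \langle a,b \rangle$ of order $p^3$ and exponent $p$ with $C_{p^2}$, identified along their centers, so $|S| = p^4$. Take $G = S$ and $|D| = p^2$. Since $S$ has class $2$ and $p$ is odd, $(a^ib^jz^l)^p = z^{lp}$, so $\Omega_1(S) = H$; hence every elementary abelian subgroup of order $p^2$ lies in $H$ and must contain $H' = \langle z^p \rangle = S'$ (otherwise $H$ would split as a direct product and be abelian), and every cyclic subgroup of order $p^2$ contains its Frattini subgroup, which is again $\langle z^p \rangle = S'$. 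So every subgroup of order $p^2$ contains $S'$, hence is normal in $S = G$, hence pronormal in $G$; and subgroups of order $p^2$ are automatically abelian. Thus all hypotheses of the present theorem hold for this $D$, yet $H$ itself is a nonabelian subgroup of order $p|D| = p^3$ that is \emph{proper} in $S$ --- so this is not the boundary case $p|D| = |S|$ you set aside. The hypothesis of Theorem~\ref{theorem_asaad_one} genuinely fails for this $D$, and no Miller--Moreno or ``all subgroups of fixed order normal'' argument can establish your claim.

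Beyond this fatal point, the proposal has two further holes on its own terms: the case $p|D| = |S|$ with $S$ minimal nonabelian is explicitly deferred to ``a transfer/fusion argument in the spirit of \cite{Asaad}'' that is never supplied, and in the weak-closure half the cyclic subgroups $R$ of order $p|D|$ are not actually handled (a cyclic $R$ has a unique maximal subgroup, and weak closure of that maximal subgroup does not distinguish $R$ from other subgroups of $S$ containing it). The architecture of the paper is itself a warning that no elementary bridge of the kind you attempt can exist: the entire point of Question~\ref{question_one} and of Theorems~\ref{second_main_result}, \ref{third_main_result} and \ref{fourth_main_result} is that dropping the order-$p|D|$ hypothesis is a genuine weakening, and closing that gap for odd $p$ costs the paper a long argument relying indirectly on the classification of finite simple groups via \cite[Theorem 3.5]{Kappe}. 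If you want a proof of the present statement from the paper's machinery rather than citing \cite{Asaad}: for $p = 2$ apply Theorem~\ref{second_main_result}; for odd $p$ note that the nonabelian-$2$-group clause is vacuous and apply Theorem~\ref{fourth_main_result}; then conclude $p$-nilpotency from \cite[Theorem 1.9 (a)]{Su}, using $(|G|,p-1)=1$ exactly as in your first paragraph.
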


Note that, because of Burnside's $p$-nilpotency criterion \cite[Corollary 5.14]{Isaacs}, the condition in Theorem \ref{theorem_asaad_two} that $S$ is noncyclic is not necessary. 

Let $G$ be a nontrivial finite group, $p$ be the smallest prime divisor of $|G|$ and $S$ be a Sylow $p$-subgroup of $G$. As remarked before Theorem \ref{theorem_asaad_one}, a subgroup $Q$ of $S$ is pronormal in $G$ if and only if $Q$ is weakly $\mathcal{F}_S(G)$-closed. Also, $G$ is $p$-nilpotent if and only if $\mathcal{F}_S(G)$ is supersolvable (indeed, if $\mathcal{F}_S(G)$ is supersolvable, then $G$ is $p$-nilpotent by the remark following Theorem \ref{first_main_result}, and conversely, if $G$ is $p$-nilpotent, then $\mathcal{F}_S(G) = \mathcal{F}_S(S)$ is supersolvable). Consequently, we can reformulate Theorem \ref{theorem_asaad_two} as follows: 

\medskip
\begin{addmargin}[25pt]{0pt}
	\textit{Let $G$ be a nontrivial finite group, $p$ be the smallest prime dividing $|G|$ and $S$ be a noncyclic Sylow $p$-subgroup of $G$. Suppose that there is a subgroup $D$ of $S$ with $1 < D < S$ such that any subgroup of $S$ with order $|D|$ is abelian and weakly $\mathcal{F}_S(G)$-closed. If $S$ is a nonabelian $2$-group, suppose moreover that any subgroup of $S$ with order $2|D|$ is abelian and weakly $\mathcal{F}_S(G)$-closed. Then $\mathcal{F}_S(G)$ is supersolvable.} 
\end{addmargin} 

\medskip
In view of this reformulation of Theorem \ref{theorem_asaad_two}, the following question naturally arises. 

\begin{question}
\label{question_one} 
Let $p$ be a prime number and $\mathcal{F}$ be a saturated fusion system on a finite $p$-group $S$. Suppose that there is a subgroup $D$ of $S$ with $1 < D < S$ such that any subgroup of $S$ with order $|D|$ is abelian and weakly $\mathcal{F}$-closed. If $S$ is a nonabelian $2$-group, suppose moreover that any subgroup of $S$ with order $2|D|$ is abelian and weakly $\mathcal{F}$-closed. Is it true in general that $\mathcal{F}$ is supersolvable? 
\end{question}

Our next result positively answers Question \ref{question_one} for the case $p = 2$. 

\begin{theoremA}
\label{second_main_result} 
Let $\mathcal{F}$ be a saturated fusion system on a finite $2$-group $S$. Suppose that there is a subgroup $D$ of $S$ with $1 < D < S$ such that any subgroup of $S$ with order $|D|$ is abelian and weakly $\mathcal{F}$-closed. If $S$ is nonabelian, suppose moreover that any subgroup of $S$ with order $2|D|$ is abelian and weakly $\mathcal{F}$-closed. Then $\mathcal{F}$ is supersolvable. 
\end{theoremA}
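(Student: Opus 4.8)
The plan is to induct on $|S|$, resting on the following reduction: a saturated fusion system $\mathcal{F}$ on $S$ is supersolvable if and only if there is a nontrivial cyclic strongly $\mathcal{F}$-closed subgroup $T$ such that the quotient fusion system $\mathcal{F}/T$ on $S/T$ is supersolvable. One direction is immediate from the definition; for the converse one lifts a series of strongly closed subgroups of $\mathcal{F}/T$ with cyclic factors through the order-preserving correspondence between strongly closed subgroups of $\mathcal{F}/T$ and the strongly closed subgroups of $\mathcal{F}$ containing $T$ (cf. \cite{Su}), and prepends $1 \le T$, whose factor $T$ is cyclic. Thus it suffices to exhibit such a $T$ for which $\mathcal{F}/T$ again satisfies the hypotheses of the theorem, so that the inductive hypothesis applies. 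The degenerate cases, namely $S$ cyclic (where $\mathcal{F}$ is nilpotent since $\mathrm{Aut}(S)$ is a $2$-group, hence supersolvable) and the boundary value $|D| = 2$, will be treated separately.

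The technical backbone is the observation that an abelian weakly $\mathcal{F}$-closed subgroup $A$ is in fact strongly $\mathcal{F}$-closed. Indeed, weak closure gives $A \trianglelefteq S$ and makes $A$ fully normalized, hence receptive; for $R \le A$ and an isomorphism $\alpha$ from $R$ onto a fully normalized $\mathcal{F}$-conjugate, the abelianness of $A$ forces $A$ into the subgroup $N_\alpha$ of \cite{AKO}, so $\alpha$ extends over $A$, and weak closure forces the image of $A$ to equal $A$. Applying this to a subgroup $A \le S$ of order $|D|$ shows that $A$ and its characteristic subgroup $B := \Omega_1(A)$ are strongly closed, and (by the same extension argument) that the $\mathcal{F}$-fusion among the involutions of $A$ is realized by $\mathrm{Aut}_{\mathcal{F}}(A)$. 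Consequently an involution $t \in A$ is $\mathcal{F}$-isolated, that is, $\langle t \rangle$ is weakly and hence strongly $\mathcal{F}$-closed, precisely when it is fixed by $\mathrm{Aut}_{\mathcal{F}}(A)$; and any such $\langle t \rangle$ is automatically central, since a normal subgroup of order $2$ of the $2$-group $S$ lies in $Z(S)$.

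When $S$ is abelian the argument concludes at once: saturation forces $\mathrm{Aut}_{\mathcal{F}}(S)$ to have odd order, because $\mathrm{Aut}_S(S) = 1$ is a Sylow $2$-subgroup of it, while the hypothesis makes every subgroup of order $|D|$ invariant under $\mathrm{Aut}_{\mathcal{F}}(S)$; a coprime-action argument (the abelian instance of the group-theoretic results of \cite{Asaad}) then forces $\mathrm{Aut}_{\mathcal{F}}(S) = 1$, so $\mathcal{F} = \mathcal{F}_S(S)$ is nilpotent and therefore supersolvable. The substance of the proof is the nonabelian case, where the extra hypothesis on subgroups of order $2|D|$ is available. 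Here the aim is to produce an $\mathcal{F}$-isolated, hence central, involution $t$, by analysing the action of $\mathrm{Aut}_{\mathcal{F}}(A)$ on $B = \Omega_1(A)$ and using the weak closure of all subgroups of orders $|D|$ and $2|D|$ to show that this automizer fixes a nonzero vector of $B$. Setting $T = \langle t \rangle$, one then checks that $\mathcal{F}/T$ inherits the hypotheses with $D$ replaced by a subgroup of order $|D|/2$: the subgroups of $S/T$ of order $|D|/2$ are exactly the images of the subgroups $U$ of $S$ of order $|D|$ with $T \le U$, which are abelian and weakly $\mathcal{F}$-closed, so their images are abelian and weakly $(\mathcal{F}/T)$-closed; and when $S/T$ is nonabelian, whence $S$ is nonabelian, the clause for subgroups of order $|D|$ of $S/T$ descends in the same way from the order-$2|D|$ hypothesis.

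The main obstacle I expect is exactly the construction of the $\mathcal{F}$-isolated involution in the nonabelian case, that is, controlling the odd-order part of $\mathrm{Aut}_{\mathcal{F}}(A)$ acting on $B$. This is where the prime $2$ is genuinely used: normality of an order-$2$ subgroup already yields centrality, $\mathrm{Aut}(C_{2^n})$ is a $2$-group, and a $2$-group of automorphisms of an $\mathbb{F}_2$-space always has nonzero fixed points; these are the features absent for odd primes, which is why the general case remains open as Question \ref{question_one}. A secondary, bookkeeping obstacle is the boundary value $|D| = 2$, where halving $|D|$ leaves no room to re-apply the induction and both clauses of the hypothesis must be used at once; the system $\mathcal{F}_{Q_8}(SL_2(3))$, which satisfies the order-$2$ clause but fails the order-$4$ clause and is not supersolvable, shows that the second clause is essential and must be exploited directly in this case. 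Finally, I note an alternative route in the constrained case: whenever $C_S(O_2(\mathcal{F})) \le O_2(\mathcal{F})$ one may pass to a model $G$ with $\mathcal{F} = \mathcal{F}_S(G)$ and invoke Theorem \ref{theorem_asaad_one} or Theorem \ref{theorem_asaad_two} directly, so that the remaining difficulty is reducing the general, possibly exotic, situation to the constrained one.
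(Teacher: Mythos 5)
Your induction skeleton (produce a nontrivial cyclic strongly $\mathcal{F}$-closed subgroup $T$, pass to $\mathcal{F}/T$, lift the series) is reasonable in outline, and your abelian case is fine; but the engine of the nonabelian case --- your ``technical backbone'' that an abelian weakly $\mathcal{F}$-closed subgroup $A$ is strongly $\mathcal{F}$-closed --- is \emph{false}, and the flaw is visible inside your own argument. Receptivity lets you extend an isomorphism $\alpha \colon R \rightarrow R'$ over $A \le N_{\alpha}$ only when the \emph{target} $R'$ is fully normalized. This shows that the fully normalized $\mathcal{F}$-conjugates of subgroups of $A$ lie in $A$; it says nothing about the remaining conjugates, and strong closure fails exactly there. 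Concretely, take $\mathcal{F} = \mathcal{F}_S(A_6)$ with $S \in \mathrm{Syl}_2(A_6)$, so $S \cong D_8$, and let $A$ be the Klein four subgroup of $S$ whose orbits on $\lbrace 1,\dots,6 \rbrace$ have sizes $4,1,1$. Since conjugation preserves orbit sizes and the other Klein four subgroup of $S$ has orbit sizes $2,2,2$, the subgroup $A$ is weakly $\mathcal{F}$-closed and abelian; yet it is not strongly $\mathcal{F}$-closed, because all involutions of $A_6$ are conjugate, so $\langle (12)(34) \rangle \le A$ is $\mathcal{F}$-conjugate to involution subgroups of $S$ outside $A$. (This example does not contradict the theorem itself: there $2|D| = |S|$ and $S$ is nonabelian, so the hypotheses of the theorem fail; but it does refute your lemma.) With the backbone gone, the whole chain collapses: fusion of elements of $A$ is not confined to $A$, so it is not realized by $\mathrm{Aut}_{\mathcal{F}}(A)$, an involution of $A$ fixed by $\mathrm{Aut}_{\mathcal{F}}(A)$ need not be $\mathcal{F}$-isolated, and the central subgroup $T$ you need for the induction is never produced. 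You also leave the boundary case $|D| = 2$ and the reduction of the exotic case to the constrained case as acknowledged but unresolved difficulties.

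The paper's proof shows what the correct substitute for your backbone is: the extension property you want holds for subgroups that are \emph{normal in $\mathcal{F}$}, not merely weakly closed. In a minimal counterexample, every $Q \in \mathcal{E}_{\mathcal{F}}^{*}$ satisfies $|Q| \ge 2|D|$ (if $|Q| < 2|D|$, an abelian overgroup of order $2|D|$ would centralize the $\mathcal{F}$-centric subgroup $Q$), and the normalizer subsystems of the non-normal members of $\mathcal{E}_{\mathcal{F}}^{*}$ are supersolvable by minimality; hence by Lemma \ref{criterion_FS_one} some essential subgroup is normal in $\mathcal{F}$, forcing $|O_2(\mathcal{F})| \ge 2|D|$. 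Now, inside $N := O_2(\mathcal{F})$, every $\mathcal{F}$-morphism between subgroups of $N$ extends to an element of $\mathrm{Aut}_{\mathcal{F}}(N)$ precisely because $N \trianglelefteq \mathcal{F}$; this is what legitimately upgrades weak closure of the subgroups of $N$ of orders $|D|$ and $2|D|$ to normality in the finite group $N \rtimes \mathrm{Aut}_{\mathcal{F}}(N)$, where Skiba's theorem (Lemma \ref{lemma_skiba}) yields a series of strongly $\mathcal{F}$-closed subgroups of $N$ with cyclic factors, and Lemma \ref{criterion_FS_three} then finishes the proof. Your closing remark about passing to a model in the constrained case and quoting Asaad is exactly the paper's alternative proof sketched in its Remark, but the step you defer --- reducing to the constrained case --- is the same Lemma \ref{criterion_FS_one} argument just described, so it cannot be waved away. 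As it stands, your proposal contains a genuine gap: its central lemma is false, and no mechanism remains for constructing $T$.
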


From \cite[Proposition 1.3]{Su}, we see that a saturated fusion system $\mathcal{F}$ on a finite $2$-group $S$ is supersolvable if and only if $\mathcal{F}$ is nilpotent, i.e. if and only if $\mathcal{F} = \mathcal{F}_S(S)$. Therefore, we could replace the word “supersolvable” by the word “nilpotent” in Theorem \ref{second_main_result}.  

The following result gives a positive answer to Question \ref{question_one} for the case that $p$ is odd and that $D$ is maximal in $S$. In fact, it shows that even more is true. 

\begin{theoremA}
\label{third_main_result}
Let $p$ be an odd prime number and $\mathcal{F}$ be a saturated fusion system on a finite $p$-group $S$. Suppose that any maximal subgroup of $S$ is weakly $\mathcal{F}$-closed. If $S$ is not cyclic, suppose moreover that $S$ has more than one abelian maximal subgroup. Then $\mathcal{F}$ is supersolvable. 
\end{theoremA}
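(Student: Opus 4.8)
The plan is to induct on $|S|$ and reduce the problem to a structural analysis of the $\mathcal{F}$-essential subgroups. If $S$ is cyclic there is nothing to prove: the series $1 \le S$ consists of strongly $\mathcal{F}$-closed subgroups with cyclic quotient $S$, so $\mathcal{F}$ is supersolvable. Hence assume $S$ is noncyclic, so $S$ has more than one abelian maximal subgroup. First I would record the group theory: if $S$ is nonabelian with two distinct abelian maximal subgroups $A,B$, then $A$ and $B$ are self-centralizing, whence $Z(S)=A\cap B$, $|S:Z(S)|=p^2$, $S'\cong C_p$ and $S$ has class $2$ (if $S$ is abelian this is vacuous). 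Next I would note that, since every maximal subgroup of $S$ is weakly $\mathcal{F}$-closed, $\mathrm{Aut}_{\mathcal{F}}(S)$ stabilizes every hyperplane of $S/\Phi(S)$; as $\dim S/\Phi(S)\ge 2$, it acts on $S/\Phi(S)$ by scalars. Because $\mathrm{Out}_{\mathcal{F}}(S)$ is a $p'$-group (saturation) and a nontrivial coprime automorphism acts nontrivially on $S/\Phi(S)$, it follows that $\mathrm{Out}_{\mathcal{F}}(S)$ embeds into $\mathbb{F}_p^{\times}$ and so is cyclic of order dividing $p-1$.

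The heart of the matter is to show that $\mathcal{F}$ has no essential subgroups, so that $\mathcal{F}=N_{\mathcal{F}}(S)$ by Alperin's fusion theorem. Since an $\mathcal{F}$-centric subgroup $Q$ satisfies $Z(S)\le C_S(Q)\le Q$, the only candidate proper essential subgroups are the maximal subgroups $M$ with $Z(S)\le M$; each such $M$ is abelian (it has cyclic quotient $M/Z(S)$ over the central $Z(S)$) and self-centralizing. Fixing such an $M$ and assuming it essential, I would work in $\Omega:=M/\Phi(M)$. Here $\mathrm{Aut}_S(M)\cong C_p$ is generated by a transvection $\tau$ with $C_M(\tau)=Z(S)$ and $[M,\tau]=S'$; radicality, i.e. $O_p(\mathrm{Aut}_{\mathcal{F}}(M))=1$, forces $S'\not\le\Phi(M)$, so $A:=\mathrm{Aut}_{\mathcal{F}}(M)$ embeds into $GL(\Omega)$ with $\bar\tau$ a genuine transvection whose axis is the image $\bar Z$ of $Z(S)$ and whose centre is the image of $S'$. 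If all transvections of $A$ shared the axis $\bar Z$ they would generate a normal $p$-subgroup, contradicting $O_p(A)=1$; so $A$ contains transvections with distinct axes, and for $p$ \emph{odd} the classification of transvection groups (where transpositions and reflections, being noncentral involutions, are not transvections) yields a section on which $A$ induces $SL_2(p)$ on a natural $2$-dimensional module containing the centre of $\bar\tau$.

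I would then combine saturation with the scalar action. As $M$ is weakly closed, hence fully normalized, receptivity identifies $N_A(\langle\tau\rangle)$ with $\mathrm{Aut}_{\mathcal{F}}(S)|_M$; hence every element of $N_A(\langle\tau\rangle)$ acts by some scalar $\mu$ on $M/Z(S)$ and, via the nondegenerate commutator form $S/Z(S)\times S/Z(S)\to S'$, by $\mu^2$ on $S'$. On the other hand the maximal torus of the $SL_2(p)$ produced above normalizes $\langle\tau\rangle$ and acts on the centre $\overline{S'}$ and on $\Omega/\bar Z\cong M/Z(S)$ by inverse scalars $a$ and $a^{-1}$. Matching the two descriptions gives $a=\mu^2$ and $a^{-1}=\mu$, forcing $a^3=1$ for every $a\in\mathbb{F}_p^{\times}$, which is impossible for an odd prime. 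This contradiction shows no such $M$ is essential, so $\mathcal{F}=N_{\mathcal{F}}(S)$; realizing it as the fusion system of the model $G=S\rtimes\mathrm{Out}_{\mathcal{F}}(S)$ (a complement existing by Schur--Zassenhaus), every chief factor of $G$ inside $S$ is an irreducible $\mathbb{F}_p[\mathrm{Out}_{\mathcal{F}}(S)]$-module, hence one-dimensional since $\mathrm{Out}_{\mathcal{F}}(S)$ is abelian of order dividing $p-1$ and $\mathbb{F}_p$ is a splitting field, while the factors above $S$ are cyclic of prime order; thus $G$ is supersolvable and so is $\mathcal{F}=\mathcal{F}_S(G)$ by \cite[Proposition 1.3]{Su}.

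The main obstacle is the middle step, the exclusion of essential subgroups. The delicate points I expect to have to settle carefully are: that for odd $p$ a transvection group whose Sylow $p$-subgroup has order $p$ necessarily involves $SL_2(p)$ acting on a $2$-dimensional natural module (this is exactly where the odd-prime hypothesis enters), and that the relevant torus genuinely lies in $N_A(\langle\tau\rangle)$ — equivalently, that it preserves the axis $\bar Z$ — in the general situation where $\Omega$ has dimension larger than $2$ (as happens already for $S\cong C_p\times p^{1+2}$). Verifying the low-dimensional case is straightforward, and the expectation is that a careful analysis of the module $\Omega$ and of the splitting of $[\Omega,A]$ reduces the general case to it.
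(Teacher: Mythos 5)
Your route is genuinely different from the paper's. The paper takes a counterexample $\mathcal{F}$ minimizing the number of morphisms, shows that some essential subgroup must be normal in $\mathcal{F}$ (hence equals $O_p(\mathcal{F})$ and is an abelian maximal subgroup), rules out nontrivial central subgroups normal in $\mathcal{F}$ via quotient systems, applies Lemma \ref{lemma_oliver} to force $S$ to be extraspecial of order $p^3$ and exponent $p$, and only then gets a contradiction from the Ruiz--Viruel classification together with the explicit computation of Lemma \ref{lemma_extraspecial}. You instead eliminate \emph{all} essential subgroups directly, with no minimal-counterexample induction at all (the induction on $|S|$ you announce is in fact never used). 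Your endgame is correct: since $S/Z(S)\cong C_p\times C_p$ forces $\Phi(S)\le Z(S)$, the scalar $\mu$ by which an extension $\beta\in\mathrm{Aut}_{\mathcal{F}}(S)$ acts on $S/\Phi(S)$ does govern both $M/Z(S)$ and, through the commutator form, $S'$; the identification $N_A(\langle\tau\rangle)=\mathrm{Aut}_{\mathcal{F}}(S)|_M$ via receptivity is right; and matching $a=\mu^2$ with $a^{-1}=\mu$ for every torus element forces $a^3=1$ for all $a\in\mathbb{F}_p^{\times}$, impossible for odd $p$. This replaces the appeal to \cite{RuizViruel} and Lemma \ref{lemma_extraspecial} entirely.

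The gap is exactly where you flagged it, and as written it is real: ``the classification of transvection groups'' does not apply off the shelf, because $H=O^{p'}(A)$ need not act irreducibly on $\Omega$ (McLaughlin-type theorems classify \emph{irreducible} transvection groups), and even granting an $SL_2(p)$-section you still must produce torus elements normalizing $\langle\tau\rangle$ as automorphisms of all of $M$, not merely of the section. Both points are settled simultaneously by the very lemma the paper quotes, namely \cite[Lemma 1.11]{Oliver} (Lemma \ref{lemma_oliver}), whose hypotheses hold in your situation: the Sylow $p$-subgroups of $A=\mathrm{Aut}_{\mathcal{F}}(M)$ have order $p$ and are not normal (radicality of the essential subgroup $M$), and every order-$p$ element of $A$ is $A$-conjugate to some $\tau^k$ with $[M,\tau^k]=S'$ of order $p$. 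The lemma gives an $A$-invariant splitting $M=C_M(H)\times[H,M]$ with $[H,M]\cong C_p\times C_p$; then $H$ acts faithfully on $[H,M]$, is generated by $p$-elements with non-normal Sylow $p$-subgroups, hence induces the full $SL_2(p)$ by Dickson's theorem, and the diagonal torus, extended trivially on $C_M(H)$, conjugates $\tau$ to $\tau^{a^2}$ on both factors and so lies in $N_A(\langle\tau\rangle)$. With this substitution (cite Oliver's lemma instead of transvection-group classification) your proof closes, and it is arguably more self-contained than the paper's, since it needs neither \cite{RuizViruel} nor the computation in Lemma \ref{lemma_extraspecial}.
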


In the statement of Theorem \ref{third_main_result}, it would not be enough to assume that $S$ has \textit{at least} one abelian maximal subgroup when $S$ is not cyclic. This can be seen from the following example. 

\begin{example}
Let $G$ be the group indexed in GAP \cite{GAP} as \verb|SmallGroup(324,160)|, $S$ be a Sylow $3$-subgroup of $G$ and $\mathcal{F} := \mathcal{F}_S(G)$. Then $S$ has precisely four maximal subgroups, precisely one of them is abelian, and all of them are weakly $\mathcal{F}$-closed. However, $\mathcal{F}$ is not supersolvable. Indeed, since $G$ is solvable, the supersolvability of $\mathcal{F}$ would imply that $G$ is $3$-supersolvable (see \cite[Theorem 1.9 (b)]{Su}), but one can check by using GAP \cite{GAP} that $G$ is not $3$-supersolvable.
\end{example} 

The next result gives a complete positive answer to Question \ref{question_one} for the case that $p$ is odd. 

\begin{theoremA}
\label{fourth_main_result} 
Let $p$ be an odd prime number and $\mathcal{F}$ be a saturated fusion system on a finite $p$-group $S$. Suppose that there is a subgroup $D$ of $S$ with $1 < D < S$ such that any subgroup of $S$ with order $|D|$ is abelian and weakly $\mathcal{F}$-closed. Then $\mathcal{F}$ is supersolvable. 
\end{theoremA}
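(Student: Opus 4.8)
The plan is to show that under the hypotheses $\mathcal{F}$ is \emph{controlled}, i.e.\ that $\mathcal{F}$ has no essential subgroups, so that by Alperin's fusion theorem $\mathcal{F} = \langle \mathrm{Aut}_{\mathcal{F}}(S)\rangle$; supersolvability is then read off from the action of $\mathrm{Aut}_{\mathcal{F}}(S)$ on $S$. Write $|S| = p^{n}$ and $|D| = p^{d}$, and record three elementary facts. First, if $S$ is cyclic then every subgroup is characteristic, hence strongly $\mathcal{F}$-closed, and $\mathcal{F}$ is supersolvable; so I may assume $S$ noncyclic, and since $p$ is odd a noncyclic $p$-group has at least two subgroups of each order $p^{i}$ with $1 \le i \le n-1$, so in particular $S$ has at least two subgroups of order $p^{d}$. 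Second, a weakly $\mathcal{F}$-closed subgroup $A$ is normal in $S$: for $s \in S$ the map $c_{s}\colon A \to A^{s}$ is an $\mathcal{F}$-morphism, so $A^{s}$ is $\mathcal{F}$-conjugate to $A$ and thus equals $A$; hence \emph{every subgroup of $S$ of order $p^{d}$ is normal in $S$}. Third, a weakly $\mathcal{F}$-closed subgroup $W$ of order $p$ is automatically strongly $\mathcal{F}$-closed, since its only nontrivial subgroup is itself; this lets me descend along quotients once a strongly closed subgroup of order $p$ is produced.

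I would then rule out essential subgroups $E$ according to their order, always using that $E$ is $\mathcal{F}$-centric, so that $C_{S}(E) = Z(E) \le E$. If $|E| < p^{d}$, pick any $A$ of order $p^{d}$ with $E \le A$; as $A$ is abelian we get $A \le C_{S}(E) \le E$, contradicting $|A| > |E|$. For $|E| > p^{d}$ the subgroup $E$ is necessarily noncyclic (a cyclic $E$ has abelian $\mathrm{Out}_{\mathcal{F}}(E)$, which cannot contain a strongly $p$-embedded subgroup), so $E$ has at least two subgroups of order $p^{d}$, and each such $A \le E$ is weakly closed; hence for every $\beta \in \mathrm{Aut}_{\mathcal{F}}(E)$ the subgroup $\beta(A)$ is an $\mathcal{F}$-conjugate of $A$ and equals $A$. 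Thus $\mathrm{Aut}_{\mathcal{F}}(E)$ normalises \emph{every} subgroup of order $p^{d}$ in $E$. The technical engine I would isolate as a lemma is that, for $p$ odd, a $p'$-subgroup of $\mathrm{Aut}(E)$ normalising every subgroup of order $p^{d}$ acts as scalar multiplication on each of its chief factors in $E$ (a power-automorphism phenomenon that genuinely fails for $p = 2$). Such a $p'$-part is then central modulo the inner/$p$-part, forcing a normal Sylow $p$-subgroup in $\mathrm{Out}_{\mathcal{F}}(E)$ and excluding a strongly $p$-embedded subgroup; so no essential subgroup has order $> p^{d}$ either.

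\textbf{The main obstacle} is the remaining case $|E| = p^{d}$. Here $E$ is abelian, weakly closed and self-centralising, and its own weak closure places no constraint on $\mathrm{Aut}_{\mathcal{F}}(E)$, since $E$ is the unique subgroup of order $p^{d}$ inside itself. One must instead exploit the weak closure of the \emph{other} subgroups of order $p^{d}$: these force $\mathrm{Aut}_{\mathcal{F}}(S)$ to normalise all of them, whence (by the same lemma, using that $p$ is odd) every $p'$-element of $\mathrm{Aut}_{\mathcal{F}}(S)$ acts as a scalar $\lambda$ on $S/\Phi(S)$. Choosing a $p'$-automorphism of $E$ that witnesses essentiality and normalises $\mathrm{Aut}_{S}(E)$, the saturation (extension) axiom promotes it to some $\hat\alpha \in \mathrm{Aut}_{\mathcal{F}}(N_{S}(E))$, and tracking $\hat\alpha$ through the filtration of $E$ induced by $S$ shows that its eigenvalues on $E$ are forced to be prescribed powers of $\lambda$. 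The hard part is to check that this eigenvalue pattern is incompatible with the determinant and Sylow-normaliser structure of a strongly $p$-embedded subgroup of $\mathrm{Out}_{\mathcal{F}}(E)$ — the clash already visible for $S$ extraspecial, where the scalar action forces $\det = \lambda^{3}$ on $E$ while the relevant torus has determinant $1$. This is exactly the step where the oddness of $p$ is decisive, mirroring the need for the extra hypothesis at the prime $2$ in Theorem~\ref{second_main_result}.

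Once $\mathcal{F}$ is controlled, the conclusion is quick. Since $\mathrm{Out}_{\mathcal{F}}(S)$ is a $p'$-group by saturation, $\mathrm{Aut}_{\mathcal{F}}(S) = \mathrm{Inn}(S) \rtimes K$ with $K$ a $p'$-group, and $\mathcal{F} = \mathcal{F}_{S}(L)$ for $L = S \rtimes K$. The group $K$ normalises every subgroup of $S$ of order $p^{d}$, so by the lemma it acts as scalars on every $L$-chief factor inside $S$; refining a chief series of $L$ below $S$ accordingly produces a chain of $K$-invariant, hence strongly $\mathcal{F}$-closed, subgroups with factors of order $p$. Thus $L$ is $p$-supersolvable and, by \cite[Proposition 1.3]{Su}, $\mathcal{F} = \mathcal{F}_{S}(L)$ is supersolvable. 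I note that the final assembly can alternatively be organised as an induction on $|S|$: a strongly $\mathcal{F}$-closed subgroup $W$ of order $p$ (which the controlledness supplies) lets one pass to $\mathcal{F}/W$, where the hypotheses hold with $D$ replaced by a subgroup of order $p^{d-1}$ when $d \ge 2$, while the maximal case $d = n-1$ is precisely Theorem~\ref{third_main_result}; either way the whole statement reduces to the exclusion of essential subgroups above, which remains the crux.
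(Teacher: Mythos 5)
Your reduction strategy is sound in its outer layers, and two of your three cases are essentially correct: for $|E| < |D|$ the centricity argument (an abelian overgroup of order $|D|$ would land in $C_S(E) \le E$) is exactly the paper's own step, and for $|E| > |D|$ your ``power-automorphism'' engine can indeed be made rigorous via Skiba's theorem (the paper's Lemma \ref{lemma_skiba}): all order-$|D|$ subgroups of $E$ are $\mathrm{Aut}_{\mathcal{F}}(E)$-invariant, so $E \rtimes \mathrm{Aut}_{\mathcal{F}}(E)$ has a chief series below $E$ with factors of order $p$, whence $\mathrm{Out}_{\mathcal{F}}(E)$ is $p$-closed and has no strongly $p$-embedded subgroup. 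The final assembly, once $S \trianglelefteq \mathcal{F}$, also works (again by Skiba). But the case you yourself flag as \textbf{the main obstacle}, $|E| = |D|$, is never closed, and this case \emph{is} the theorem. Your proposed mechanism has two unjustified steps. First, the claim that every $p'$-element of $\mathrm{Aut}_{\mathcal{F}}(S)$ acts as a \emph{single} scalar $\lambda$ on $S/\Phi(S)$ does not follow from the invariance of all order-$|D|$ subgroups: Skiba-type arguments give scalar action on each chief factor separately, with no control tying the scalars together, and order-$|D|$ subgroups of $S$ do not correspond cleanly to subgroups of $S/\Phi(S)$ unless $S$ is elementary abelian. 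Second, and more fundamentally, the existence of a strongly $p$-embedded subgroup of $\mathrm{Out}_{\mathcal{F}}(E)$ is a property of the whole group $\mathrm{Out}_{\mathcal{F}}(E)$; since $E$ contains no order-$|D|$ subgroup other than itself, the hypotheses impose no direct restriction on $\mathrm{Aut}_{\mathcal{F}}(E)$, and no amount of eigenvalue or determinant bookkeeping for one extended automorphism $\hat\alpha \in \mathrm{Aut}_{\mathcal{F}}(N_S(E))$ can rule such a subgroup out.

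The configuration you must exclude is genuinely realizable at the local level: in the Ruiz--Viruel fusion systems on extraspecial $S$ of order $p^3$ and exponent $p$ (cf. Lemma \ref{lemma_extraspecial}), the abelian maximal subgroups \emph{are} essential and self-centralizing; what fails there is the weak closure of the \emph{other} order-$|D|$ subgroups, a global condition on $\mathcal{F}$. This is precisely why the paper does not attempt a direct local exclusion. Instead it argues: either no member of $\mathcal{E}_{\mathcal{F}}^{*}$ is normal in $\mathcal{F}$ (then induction plus Lemma \ref{criterion_FS_one} finishes), or $\mathcal{F}$ is constrained, in which case the model theorem \cite[Part III, Theorem 5.10]{AKO} realizes $\mathcal{F}$ as $\mathcal{F}_S(G)$, and the group case (Lemma \ref{lemma_Thm_D}) is settled by a long argument combining Skiba's theorem, the CFSG-dependent classification of minimal non-$p$-closed groups \cite{Kappe} (to force $S/O_p(G)$ cyclic), Berkovich--Janko's theory of $\mathcal{A}_2$-groups \cite{BerkovichJanko}, Oliver's lemma \cite{Oliver}, and Theorem \ref{third_main_result}, which itself rests on the Ruiz--Viruel classification \cite{RuizViruel}. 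Your proposal has no substitute for this machinery: the ``determinant clash'' is only gestured at for the extraspecial case (which is anyway Theorem \ref{third_main_result}, a result you also invoke), and your alternative inductive assembly additionally breaks down when $|D| = p$, since the quotient hypotheses then become vacuous. As it stands, the proposal reduces the theorem to an unproven claim that is essentially equivalent to the theorem itself.
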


We remark that our proof of Theorem \ref{fourth_main_result} indirectly relies on the classification of finite simple groups. More precisely, in our proof of Theorem \ref{fourth_main_result}, we will apply \cite[Theorem 3.5]{Kappe}, and the classification of finite simple groups was used in the proof of that result.

We finish the introduction with the following open question that naturally arises in view of Theorems \ref{third_main_result} and \ref{fourth_main_result}.

\begin{question}
\label{open_question_one} 
Let $p$ be an odd prime number and $\mathcal{F}$ be a saturated fusion system on a finite $p$-group $S$. Suppose that there is a subgroup $D$ of $S$ with $1 < D < S$ such that any subgroup of $S$ with order $|D|$ is weakly $\mathcal{F}$-closed. If $S$ is not cyclic, suppose moreover that $S$ has more than one abelian subgroup with order $|D|$. Is it true in general that $\mathcal{F}$ is supersolvable? 
\end{question}





 
\section{Preliminaries} 
\label{preliminaries} 
In this section, we collect some lemmas needed for the proofs of our main results. 

\begin{lemma}
\label{lemma_strongly_closed_1} 
Let $p$ be a prime number, $\mathcal{F}$ be a fusion system on a finite $p$-group $S$ and $Q \le R \le S$. Suppose that $Q$ is strongly $\mathcal{F}$-closed and that $R$ is weakly $\mathcal{F}$-closed. Then $R/Q$ is weakly $\mathcal{F}/Q$-closed. 
\end{lemma}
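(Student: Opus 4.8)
The plan is to unwind the three relevant definitions and then read off the conclusion directly from them. Recall that $R$ being weakly $\mathcal{F}$-closed means precisely that $\varphi(R) = R$ for every $\varphi \in \mathrm{Hom}_{\mathcal{F}}(R, S)$, i.e. that $R$ admits no $\mathcal{F}$-conjugate other than itself. Since $Q$ is strongly $\mathcal{F}$-closed, the quotient $\mathcal{F}/Q$ is a well-defined fusion system on $S/Q$ (see \cite{AKO}): for any morphism $\varphi \in \mathrm{Hom}_{\mathcal{F}}(P, T)$ between subgroups $P, T$ of $S$ with $Q \le P$ and $Q \le T$, strong closure forces $\varphi(Q) \le Q$, and since $\varphi$ is an injective homomorphism and $Q$ is finite, this gives $\varphi(Q) = Q$; hence $\varphi$ induces a well-defined homomorphism $\bar{\varphi} \colon P/Q \to T/Q$, $xQ \mapsto \varphi(x)Q$, and the morphisms of $\mathcal{F}/Q$ are by definition exactly these induced maps $\bar{\varphi}$.

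With this in hand, I would argue as follows. Let $\psi \in \mathrm{Hom}_{\mathcal{F}/Q}(R/Q, S/Q)$ be arbitrary; the goal is to show $\psi(R/Q) = R/Q$. By the definition of $\mathcal{F}/Q$, the morphism $\psi$ is induced by some $\varphi \in \mathrm{Hom}_{\mathcal{F}}(R, S)$, say $\psi = \bar{\varphi}$. Now $\varphi(R)$ is an $\mathcal{F}$-conjugate of $R$, so weak $\mathcal{F}$-closure of $R$ yields $\varphi(R) = R$. Consequently $\psi(R/Q) = \bar{\varphi}(R/Q) = \varphi(R)/Q = R/Q$. As $\psi$ was arbitrary, $R/Q$ is weakly $\mathcal{F}/Q$-closed, which is the assertion.

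The computation itself is immediate, so the only point requiring genuine care is the interface with the definition of the quotient fusion system. Concretely, I must make sure that an arbitrary morphism of $\mathcal{F}/Q$ with source $R/Q$ really does lift to an $\mathcal{F}$-morphism whose source is exactly $R$ (rather than merely some overgroup of $Q$ that I would then have to restrict), and that it is strong closure of $Q$ which guarantees $\varphi(Q) = Q$ so that $\bar{\varphi}$ is defined at all. This is exactly where the hypothesis that $Q$ be strongly---and not merely weakly---$\mathcal{F}$-closed enters: without it, $\mathcal{F}/Q$ need not even be defined and $\varphi$ need not stabilise $Q$. I would therefore phrase the entire argument in terms of the definition of $\mathcal{F}/Q$, which renders both of these points transparent and keeps the proof short.
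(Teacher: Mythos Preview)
Your proof is correct and follows essentially the same line as the paper's: take an arbitrary $\psi\in\mathrm{Hom}_{\mathcal{F}/Q}(R/Q,S/Q)$, lift it to some $\varphi\in\mathrm{Hom}_{\mathcal{F}}(R,S)$ via the definition of $\mathcal{F}/Q$, apply weak $\mathcal{F}$-closure of $R$ to get $\varphi(R)=R$, and conclude $\psi(R/Q)=R/Q$. The only difference is that you spell out in more detail why the lift exists and why strong closure of $Q$ is needed for $\mathcal{F}/Q$ to make sense, whereas the paper simply invokes the definition of $\mathcal{F}/Q$ and moves on.
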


\begin{proof}
Let $\psi: R/Q \rightarrow S/Q$ be a morphism in $\mathcal{F}/Q$. We have to show that $\psi(R/Q) = R/Q$. As an $\mathcal{F}/Q$-morphism, $\psi$ is induced by an $\mathcal{F}$-morphism, i.e. there is a morphism $\varphi \in \mathrm{Hom}_{\mathcal{F}}(R,S)$ such that $\psi(rQ) = \varphi(r)Q$ for all $r \in R$. Since $R$ is weakly $\mathcal{F}$-closed, we have $\psi(R/Q) = \varphi(R)Q/Q = RQ/Q = R/Q$, as required. 
\end{proof}

\begin{lemma}
\label{lemma_strongly_closed_2}
	Let $p$ be a prime number, $\mathcal{F}$ be a fusion system on a finite $p$-group $S$ and $Q \le R \le S$. Suppose that $Q \trianglelefteq \mathcal{F}$ and that $R/Q$ is strongly $\mathcal{F}/Q$-closed. Then $R$ is strongly $\mathcal{F}$-closed. 
\end{lemma}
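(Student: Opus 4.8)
The plan is to verify the definition of strong closure for $R$ directly, reducing it via the normality of $Q$ to the assumed strong closure of $R/Q$ in the quotient system. Concretely, I would fix a subgroup $P \le R$ together with a morphism $\varphi \in \mathrm{Hom}_{\mathcal{F}}(P,S)$, and aim to show $\varphi(P) \le R$. First note that, since $Q \trianglelefteq \mathcal{F}$, the subgroup $Q$ is in particular strongly $\mathcal{F}$-closed, so the quotient fusion system $\mathcal{F}/Q$ on $S/Q$ is defined; and the hypothesis $Q \le R$ guarantees that $R/Q$ makes sense as a subgroup of $S/Q$.

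The first key step exploits the extension axiom built into the normality of $Q$. By the definition of $Q \trianglelefteq \mathcal{F}$, the morphism $\varphi$ extends to a morphism $\bar\varphi \in \mathrm{Hom}_{\mathcal{F}}(PQ,S)$ satisfying $\bar\varphi(Q) = Q$. Since $P \le R$ and $Q \le R$, we have $PQ \le R$, so $\bar\varphi$ is a morphism whose domain $PQ$ contains $Q$ and lies inside $R$. The second step is to pass to the quotient: because $\bar\varphi(Q) = Q$, the map $\bar\varphi$ induces a well-defined morphism $\tilde\varphi \in \mathrm{Hom}_{\mathcal{F}/Q}(PQ/Q,\, S/Q)$ given by $\tilde\varphi(gQ) = \bar\varphi(g)Q$. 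As $PQ/Q \le R/Q$ and $R/Q$ is strongly $\mathcal{F}/Q$-closed, we obtain $\tilde\varphi(PQ/Q) \le R/Q$. Translating this back, $\bar\varphi(PQ)Q/Q \le R/Q$, hence $\bar\varphi(PQ)Q \le R$; and since $\bar\varphi(PQ) \le \bar\varphi(PQ)Q$, it follows that $\bar\varphi(PQ) \le R$. In particular $\varphi(P) = \bar\varphi(P) \le R$, which establishes that $R$ is strongly $\mathcal{F}$-closed.

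The only genuine subtlety, and hence the step to get right, is the correct invocation of the normality of $Q$: I must use that $Q \trianglelefteq \mathcal{F}$ provides the extension $\bar\varphi$ with $\bar\varphi(Q) = Q$, both in order to enlarge the domain of $\varphi$ so that it contains $Q$ (so that a morphism of $\mathcal{F}/Q$ is induced at all) and in order to ensure that $\tilde\varphi$ is genuinely a morphism in $\mathcal{F}/Q$. Everything else is routine bookkeeping with the containments $Q \le R$ and $PQ \le R$ and the correspondence between subgroups of $S$ containing $Q$ and subgroups of $S/Q$.
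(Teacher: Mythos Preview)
Your proof is correct and follows essentially the same approach as the paper: extend $\varphi$ to $\bar\varphi\in\mathrm{Hom}_{\mathcal F}(PQ,S)$ with $\bar\varphi(Q)=Q$ using $Q\trianglelefteq\mathcal F$, pass to the induced morphism in $\mathcal F/Q$, and apply strong $\mathcal F/Q$-closure of $R/Q$ to conclude $\varphi(P)\le R$. The only difference is notational and in the level of detail at the end, where the paper jumps directly from $\overline\psi(PQ/Q)\le R/Q$ to $\psi(P)\le R$.
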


\begin{proof}
	Let $P$ be a subgroup of $R$ and $\varphi: P \rightarrow S$ be a morphism in $\mathcal{F}$. We have to show that $\varphi(P) \le R$. Since $Q$ is normal in $\mathcal{F}$ by hypothesis, $\varphi$ extends to a morphism $\psi \in \mathrm{Hom}_{\mathcal{F}}(PQ,S)$ with $\psi(Q) = Q$. Let 
	\begin{equation*}
		\overline{\psi}: PQ/Q \rightarrow S/Q, xQ \mapsto \psi(x)Q. 
	\end{equation*}
	Then $\overline{\psi}$ is a morphism in $\mathcal{F}/Q$. Since $R/Q$ is strongly $\mathcal{F}/Q$-closed and $PQ/Q \le R/Q$, it follows that $\overline{\psi}(PQ/Q) \le R/Q$. So, we have $\varphi(P) = \psi(P) \le R$, as required. 
\end{proof}

\begin{lemma}
\label{lemma_strongly_closed_3}
Let $p$ be a prime number, $\mathcal{F}$ be a supersolvable saturated fusion system on a finite $p$-group $S$ and $Q$ be a weakly $\mathcal{F}$-closed subgroup of $S$. Then $Q$ is strongly $\mathcal{F}$-closed. 
\end{lemma}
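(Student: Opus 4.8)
The plan is to bypass the quotient machinery and instead prove the stronger structural fact that a supersolvable saturated fusion system has \emph{no essential subgroups}, from which the lemma follows immediately via Alperin's fusion theorem. First I would fix a series $1 = S_0 \le S_1 \le \dots \le S_m = S$ witnessing the supersolvability of $\mathcal{F}$, so that each $S_i$ is strongly $\mathcal{F}$-closed and each $S_{i+1}/S_i$ is cyclic. Suppose, for contradiction, that $P < S$ is an $\mathcal{F}$-essential subgroup. Since each $S_i$ is strongly $\mathcal{F}$-closed and every $\alpha \in \operatorname{Aut}_{\mathcal{F}}(P)$ is an $\mathcal{F}$-isomorphism, $\alpha$ maps $S_i \cap P$ both into $S_i$ and into $P$; comparing orders gives $\alpha(S_i \cap P) = S_i \cap P$. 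Thus $\operatorname{Aut}_{\mathcal{F}}(P)$ stabilizes the chain $1 = S_0 \cap P \le \dots \le S_m \cap P = P$, whose factors $(S_{i+1}\cap P)/(S_i \cap P)$ embed into the cyclic groups $S_{i+1}/S_i$ and are therefore cyclic.

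Next I would analyse the resulting action. The natural map $\operatorname{Aut}_{\mathcal{F}}(P) \to \prod_i \operatorname{Aut}((S_{i+1}\cap P)/(S_i \cap P))$ has abelian image, because the automorphism group of a cyclic group is abelian, and its kernel $K$ is the stability group of the chain, hence a $p$-group. Therefore the derived subgroup of $\operatorname{Aut}_{\mathcal{F}}(P)$ lies in $K$, so $\operatorname{Aut}_{\mathcal{F}}(P)/O_p(\operatorname{Aut}_{\mathcal{F}}(P))$ is abelian with trivial $O_p$, i.e. a $p'$-group; consequently $O_p(\operatorname{Aut}_{\mathcal{F}}(P))$ is a normal Sylow $p$-subgroup and $\operatorname{Aut}_{\mathcal{F}}(P)$ is $p$-closed. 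Passing to the quotient by $\operatorname{Inn}(P)$, the group $\operatorname{Out}_{\mathcal{F}}(P)$ is $p$-closed as well, and it has a \emph{nontrivial} normal Sylow $p$-subgroup since $P$ is a proper centric subgroup, so $\operatorname{Out}_S(P) = N_S(P)/P \ne 1$. A group with a nontrivial normal Sylow $p$-subgroup cannot contain a strongly $p$-embedded subgroup, contradicting the assumption that $P$ is essential. Hence $\mathcal{F}$ has no essential subgroups.

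Finally I would feed this into Alperin's fusion theorem (see \cite{AKO}). Since the essential subgroups together with $S$ form a conjugation family and there are none of the former, every morphism of $\mathcal{F}$ is a composite of restrictions of automorphisms in $\operatorname{Aut}_{\mathcal{F}}(S)$. Because $Q$ is weakly $\mathcal{F}$-closed, every $\alpha \in \operatorname{Aut}_{\mathcal{F}}(S)$ satisfies $\alpha(Q) = Q$, as $\alpha(Q)$ is an $\mathcal{F}$-conjugate of $Q$. Thus for any $\mathcal{F}$-morphism $\varphi$ and any $x \in Q$ in its domain, writing $\varphi$ as such a composite and applying the $S$-automorphisms one at a time keeps the image inside $Q$, so $\varphi(x) \in Q$. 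This is precisely the assertion that $Q$ is strongly $\mathcal{F}$-closed.

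I expect the main obstacle to be the structural claim that $\mathcal{F}$ has no essential subgroups; the crux there is that strong closure of the $S_i$ forces $\operatorname{Aut}_{\mathcal{F}}(P)$ to stabilize a chain with cyclic factors, which via the stability-group argument makes $\operatorname{Aut}_{\mathcal{F}}(P)$ $p$-closed and rules out a strongly $p$-embedded quotient. A purely inductive route using the quotient systems $\mathcal{F}/N$ together with Lemmas \ref{lemma_strongly_closed_1} and \ref{lemma_strongly_closed_2} seems to stall: for a nontrivial $N \trianglelefteq \mathcal{F}$ one can show that $QN$ is strongly closed, but recovering strong closure of $Q$ itself when $N \not\le Q$ appears to require exactly the control over fusion that the absence of essential subgroups supplies, so I would make that control explicit rather than hide it inside an induction.
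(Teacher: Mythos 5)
Your proof is correct, but it takes a genuinely different and considerably longer route than the paper's. The paper's proof is three lines: by \cite[Proposition 2.3]{Su}, supersolvability forces $S \trianglelefteq \mathcal{F}$, so every $\varphi \in \mathrm{Hom}_{\mathcal{F}}(P,S)$ with $P \le Q$ extends to some $\psi \in \mathrm{Aut}_{\mathcal{F}}(S)$, and weak closure gives $\varphi(P) = \psi(P) \le \psi(Q) = Q$. What you do, in effect, is re-prove that cited result of Su from scratch: ``no $\mathcal{F}$-essential subgroups'' is equivalent to $S \trianglelefteq \mathcal{F}$ for saturated systems (via \cite[Part I, Proposition 4.5]{AKO}, exactly the mechanism in the paper's Lemma \ref{criterion_FS_one}), and your stability-group argument is a sound way to get there: the strongly $\mathcal{F}$-closed $S_i$ cut out an $\mathrm{Aut}_{\mathcal{F}}(P)$-invariant chain in $P$ with cyclic factors, the induced map to $\prod_i \mathrm{Aut}\bigl((S_{i+1}\cap P)/(S_i\cap P)\bigr)$ has abelian image and $p$-group kernel, so $\mathrm{Aut}_{\mathcal{F}}(P)$ and hence $\mathrm{Out}_{\mathcal{F}}(P)$ are $p$-closed, ruling out a strongly $p$-embedded subgroup by \cite[Proposition A.7 (c)]{AKO}. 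Two details you should make explicit: each $S_i \cap P$ is normal in $P$ (because strong closure of $S_i$ implies $S_i \trianglelefteq S$), which is what makes the factors genuine cyclic groups and places your kernel $K$ inside the stability group of a normal chain of the $p$-group $P$, where the standard theorem that such stability groups are $p$-groups applies; and the parenthetical about $\mathrm{Out}_S(P) \ne 1$ is unnecessary, since $p$-closure alone already excludes strongly $p$-embedded subgroups. Your closing step via the Alperin--Goldschmidt fusion theorem, used in place of the extension property of the normal subgroup $S$, is also fine. What your route buys: it is self-contained (no appeal to \cite{Su}) and it yields the stronger structural fact that $\mathrm{Aut}_{\mathcal{F}}(P)$ is $p$-closed for \emph{every} $P \le S$. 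What it costs: length, plus reliance on Alperin's fusion theorem and the stability-group theorem where the paper needs only the definition of $N_{\mathcal{F}}(S)$. Your final remark that an induction through quotient systems would stall is moot, since the paper's proof uses no induction at all.
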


\begin{proof}
Let $P$ be a subgroup of $Q$ and $\varphi: P \rightarrow S$ be a morphism in $\mathcal{F}$. We have to show that $\varphi(P) \le Q$. By \cite[Proposition 2.3]{Su}, $S$ is normal in $\mathcal{F}$. Hence, $\varphi$ extends to an automorphism $\psi \in \mathrm{Aut}_{\mathcal{F}}(S)$. Since $Q$ is weakly $\mathcal{F}$-closed, we have $\varphi(P) = \psi(P) \le \psi(Q) = Q$, as required. 
\end{proof}

\begin{lemma}
\label{lemma_supersolvable_FS_2}
Let $p$ be a prime number, $\mathcal{F}$ be a supersolvable saturated fusion system on a finite $p$-group $S$ and $Q$ be a strongly $\mathcal{F}$-closed subgroup of $S$. Then the following hold:
\begin{enumerate}
\item[\textnormal{(1)}] There is a series 
\begin{equation*}
1 = S_0 \le S_1 \le \dots \le S_m = S
\end{equation*}
of subgroups of $S$ such that $S_i$ is strongly $\mathcal{F}$-closed for all $0 \le i \le m$, such that $S_{i+1}/S_i$ is cyclic for all $0 \le i < m$ and such that $S_j = Q$ for some $0 \le j \le m$. 
\item[\textnormal{(2)}] $\mathcal{F}/Q$ is supersolvable. 
\end{enumerate} 
\end{lemma}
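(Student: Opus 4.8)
The plan is to exploit the fact that supersolvability forces $S$ to be normal in $\mathcal{F}$, which turns strong closure into a purely $\mathrm{Aut}_{\mathcal{F}}(S)$-invariance condition. As recorded in the proof of Lemma \ref{lemma_strongly_closed_3}, since $\mathcal{F}$ is supersolvable we have $S \trianglelefteq \mathcal{F}$, so every morphism in $\mathcal{F}$ extends to an automorphism in $\mathrm{Aut}_{\mathcal{F}}(S)$. I would first check that, under this hypothesis, a subgroup $U \le S$ is strongly $\mathcal{F}$-closed if and only if $\alpha(U) = U$ for every $\alpha \in \mathrm{Aut}_{\mathcal{F}}(S)$: one direction restricts $\alpha$ to $U$, the other extends a morphism defined on a subgroup of $U$ to some $\alpha \in \mathrm{Aut}_{\mathcal{F}}(S)$. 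The crucial consequence is that the strongly $\mathcal{F}$-closed subgroups of $S$ are all normal in $S$ and are closed under forming intersections and (being normal) products, because the family of $\mathrm{Aut}_{\mathcal{F}}(S)$-invariant subgroups obviously has these closure properties.

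For part (1), I would start from an arbitrary supersolvable series $1 = T_0 \le T_1 \le \dots \le T_n = S$ with each $T_i$ strongly $\mathcal{F}$-closed and each $T_{i+1}/T_i$ cyclic, and interpolate $Q$ by the classical refinement trick. Set
\[
1 = Q \cap T_0 \le \dots \le Q \cap T_n = Q = Q T_0 \le Q T_1 \le \dots \le Q T_n = S.
\]
Each term is an intersection or a product of the strongly $\mathcal{F}$-closed subgroups $Q$ and $T_i$, hence is strongly $\mathcal{F}$-closed by the closure properties above. The successive quotients are cyclic by the isomorphism theorems: $(Q \cap T_{i+1})/(Q \cap T_i)$ embeds into $T_{i+1}/T_i$, while $Q T_{i+1}/Q T_i$ is a quotient of $T_{i+1}/T_i$, so both are cyclic as a subgroup, respectively a quotient, of a cyclic group. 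Deleting the repeated term $Q$ yields a series of the required form in which $Q$ occurs, proving (1).

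For part (2), I would use the series from (1), say $1 = S_0 \le \dots \le S_m = S$ with $S_j = Q$, and pass to the quotient. Note first that $\mathcal{F}/Q$ is a saturated fusion system since $\mathcal{F}$ is saturated and $Q$ is strongly $\mathcal{F}$-closed. I then claim that $S_i/Q$ is strongly $\mathcal{F}/Q$-closed for every $i \ge j$: given a morphism $\overline{\varphi}$ in $\mathcal{F}/Q$ out of a subgroup of $S_i/Q$, lift it to $\varphi \in \mathrm{Hom}_{\mathcal{F}}(P, S)$ with $Q \le P \le S_i$, use that $S_i$ is strongly $\mathcal{F}$-closed to get $\varphi(P) \le S_i$, and push back down to conclude $\overline{\varphi}(P/Q) \le S_i/Q$. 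Since moreover $(S_{i+1}/Q)/(S_i/Q) \cong S_{i+1}/S_i$ is cyclic, the series $1 = S_j/Q \le S_{j+1}/Q \le \dots \le S_m/Q = S/Q$ witnesses the supersolvability of $\mathcal{F}/Q$.

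The only real obstacle is the closure of the strongly $\mathcal{F}$-closed subgroups under products, which for general saturated fusion systems needs some care; here it is dissolved cheaply because supersolvability reduces strong closure to $\mathrm{Aut}_{\mathcal{F}}(S)$-invariance. The remaining verifications, namely the cyclicity of the refined quotients and the descent of strong closure to $\mathcal{F}/Q$, are routine applications of the isomorphism theorems and of the definition of the quotient fusion system, exactly as in Lemmas \ref{lemma_strongly_closed_1} and \ref{lemma_strongly_closed_2}.
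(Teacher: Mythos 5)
Your proposal is correct, but it takes a genuinely different route from the paper's proof. The paper argues externally: by \cite[Proposition 1.3]{Su} it realizes $\mathcal{F}$ as $\mathcal{F}_S(G)$ for a $p$-supersolvable finite group $G$ with $S \in \mathrm{Syl}_p(G)$, reduces to the case $S \trianglelefteq G$ (so that strong $\mathcal{F}$-closure of $Q$ becomes normality $Q \trianglelefteq G$), and then takes a chief series of $G$ through $Q$ and $S$; normality in $G$ gives strong closure of each term, $p$-supersolvability makes the factors below $S$ cyclic of order $p$, and part (2) falls out because $\mathcal{F}/Q = \mathcal{F}_{S/Q}(G/Q)$ is visibly the fusion system of the $p$-supersolvable group $G/Q$. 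You instead stay entirely inside the fusion system: from $S \trianglelefteq \mathcal{F}$ (exactly as in the proof of Lemma \ref{lemma_strongly_closed_3}) you identify the strongly $\mathcal{F}$-closed subgroups with the $\mathrm{Aut}_{\mathcal{F}}(S)$-invariant ones, deduce closure under intersections and products, and interpolate $Q$ into an arbitrary supersolvable series by the classical refinement trick; for (2) you push strong closure down to $\mathcal{F}/Q$ by lifting morphisms, which is the strong-closure analogue of Lemma \ref{lemma_strongly_closed_1} (and the converse direction of Lemma \ref{lemma_strongly_closed_2}). Each approach buys something: yours avoids the realization theorem \cite[Proposition 1.3]{Su} entirely, needing only \cite[Proposition 2.3]{Su}, so it is more elementary and self-contained on the fusion-system side; its one extra debt is the standard fact that $\mathcal{F}/Q$ is saturated whenever $Q$ is strongly $\mathcal{F}$-closed (needed for ``supersolvable'' to be meaningful as defined; see \cite{Craven}), which the paper gets for free from the group model. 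Conversely, the paper's chief-series argument yields the slightly stronger conclusion that all cyclic factors can be taken of order $p$, which your refinement does not directly give --- but the lemma only asks for cyclic factors, so this costs you nothing here.
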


\begin{proof}
By \cite[Proposition 1.3]{Su}, there is a $p$-supersolvable finite group $G$ with $S \in \mathrm{Syl}_p(G)$ and $\mathcal{F} = \mathcal{F}_S(G)$. Since $S$ is normal in $\mathcal{F}$ by \cite[Proposition 2.3]{Su}, we have $\mathcal{F} = N_{\mathcal{F}}(S) = \mathcal{F}_S(N_G(S))$. Therefore, upon replacing $G$ by $N_G(S)$, we may (and will) assume that $S \trianglelefteq G$. 

Since $Q$ is strongly $\mathcal{F}$-closed and $S \trianglelefteq G$, we have $Q \trianglelefteq G$. Let $1 = H_0 \le H_1 \le \dots \le H_n = G$ be a chief series of $G$ through $Q$ and $S$. Then $S = H_m$ for some $0 \le m \le n$ and $Q = H_j$ for some $0 \le j \le m$. 

Set $S_i := H_i$ for all $0 \le i \le m$. Then we have $1 = S_0 \le S_1 \le \dots \le S_m = S$. For each $0 \le i \le m$, the subgroup $S_i$ is strongly $\mathcal{F}$-closed since $S_i \trianglelefteq G$. For each $0 \le i < m$, the group $S_{i+1}/S_i$ is cyclic of order $p$ since it is a chief factor of the $p$-supersolvable group $G$. Moreover, $S_j = H_j = Q$. The proof of (1) is now complete. 

It remains to prove (2). We have 
\begin{equation*}
	1 = S_j/Q \le S_{j+1}/Q \le \dots \le S_m/Q = S/Q. 
\end{equation*}
For each $j \le k \le m$, we have $S_k/Q \trianglelefteq G/Q$, and since $\mathcal{F}/Q = \mathcal{F}_{S/Q}(G/Q)$, it follows that $S_k/Q$ is strongly $\mathcal{F}/Q$-closed. Moreover, for each $j \le k < m$, the quotient $(S_{k+1}/Q)/(S_k/Q) \cong S_{k+1}/S_k$ is cyclic of order $p$. It follows that $\mathcal{F}/Q$ is supersolvable, and so the proof of (2) is complete. 
\end{proof} 

To state the next lemma, which will play a key role in the proofs of all our main results, we introduce the following notation.

\begin{notation}
	Let $p$ be a prime number and $\mathcal{F}$ be a saturated fusion system on a finite $p$-group $S$. We set
	\begin{equation*}
		\mathcal{E}_{\mathcal{F}}^{*} := \lbrace Q \le S \mid \textnormal{$Q$ is $\mathcal{F}$-essential, or $Q = S$} \rbrace.
	\end{equation*}
\end{notation} 

\begin{lemma}
	\label{criterion_FS_one}
	Let $p$ be a prime number and $\mathcal{F}$ be a saturated fusion system on a finite $p$-group $S$. Suppose that, for each $Q \in \mathcal{E}_{\mathcal{F}}^{*}$, the fusion system $N_{\mathcal{F}}(Q)$ is supersolvable. Then $\mathcal{F}$ is supersolvable. 
\end{lemma}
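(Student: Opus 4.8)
My plan is to use the Alperin--Goldschmidt fusion theorem (see \cite[Part I, Theorem 3.6]{AKO}) as the engine. It tells us that $\mathcal{F}$ is generated by the automorphism groups $\mathrm{Aut}_{\mathcal{F}}(Q)$ with $Q$ ranging over (fully normalized representatives of) $\mathcal{E}_{\mathcal{F}}^{*}$. The crucial observation is that if $\mathcal{F}$ has no $\mathcal{F}$-essential subgroups at all, then $\mathcal{E}_{\mathcal{F}}^{*} = \lbrace S \rbrace$, so Alperin--Goldschmidt gives $\mathcal{F} = N_{\mathcal{F}}(S)$, and this is supersolvable by the hypothesis applied to $Q = S$. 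Thus the whole lemma reduces to the single claim that, under the stated hypotheses, \emph{$\mathcal{F}$ admits no $\mathcal{F}$-essential subgroup}.

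I would prove this claim by contradiction. Assume $Q$ is $\mathcal{F}$-essential; replacing $Q$ by a conjugate I may take it fully normalized, so that $N_{\mathcal{F}}(Q)$ is a saturated fusion system on $N_S(Q)$, and by hypothesis it is supersolvable. By \cite[Proposition 2.3]{Su} (the result that in a supersolvable saturated fusion system the underlying $p$-group is normal), applied to $N_{\mathcal{F}}(Q)$, I obtain $N_S(Q) \trianglelefteq N_{\mathcal{F}}(Q)$. The plan is then to extract from this normality a nontrivial normal $p$-subgroup of $\mathrm{Out}_{\mathcal{F}}(Q)$: since $N_S(Q)$ is normal in $N_{\mathcal{F}}(Q)$, every automorphism $\alpha \in \mathrm{Aut}_{\mathcal{F}}(Q) = \mathrm{Aut}_{N_{\mathcal{F}}(Q)}(Q)$ extends to an $\mathcal{F}$-automorphism $\widehat{\alpha}$ of $N_S(Q)$, and a direct computation $\alpha c_x \alpha^{-1} = c_{\widehat{\alpha}(x)}$ (for $x \in N_S(Q)$) shows that the subgroup $\mathrm{Aut}_{N_S(Q)}(Q)$ is normalized by all of $\mathrm{Aut}_{\mathcal{F}}(Q)$. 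Because $Q$ is $\mathcal{F}$-centric we have $C_S(Q) = Z(Q)$, so the image of $\mathrm{Aut}_{N_S(Q)}(Q)$ in $\mathrm{Out}_{\mathcal{F}}(Q)$ is isomorphic to $N_S(Q)/Q$, which is nontrivial since $Q$ is proper in $S$. Hence $O_p(\mathrm{Out}_{\mathcal{F}}(Q)) \ne 1$. But $Q$ being $\mathcal{F}$-essential means $\mathrm{Out}_{\mathcal{F}}(Q)$ has a strongly $p$-embedded subgroup, which forces $O_p(\mathrm{Out}_{\mathcal{F}}(Q)) = 1$ (any strongly $p$-embedded subgroup contains $O_p$, which would then lie in a $p'$-intersection and vanish). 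This contradiction establishes the claim.

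I expect the main obstacle to be the second paragraph's translation step, namely turning the fusion-theoretic statement $N_S(Q) \trianglelefteq N_{\mathcal{F}}(Q)$ into the group-theoretic conclusion $O_p(\mathrm{Out}_{\mathcal{F}}(Q)) \ne 1$; this requires carefully invoking that morphisms of $Q$ inside $N_{\mathcal{F}}(Q)$ extend to $N_S(Q)$ and then checking that $\mathrm{Aut}_{N_S(Q)}(Q)$ is genuinely normalized in $\mathrm{Aut}_{\mathcal{F}}(Q)$, together with the use of $\mathcal{F}$-centricity to identify the relevant quotient with $N_S(Q)/Q$. The remaining ingredients (the reduction via Alperin--Goldschmidt and the standard fact that a strongly $p$-embedded subgroup forces trivial $O_p$) are routine, so the proof should close cleanly once this step is in place.
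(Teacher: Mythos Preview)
Your proposal is correct and follows essentially the same strategy as the paper: both arguments show that no $Q \in \mathcal{E}_{\mathcal{F}}^{*}$ can be $\mathcal{F}$-essential (by exhibiting a nontrivial normal $p$-subgroup in $\mathrm{Out}_{\mathcal{F}}(Q)$, which is incompatible with a strongly $p$-embedded subgroup), whence $\mathcal{E}_{\mathcal{F}}^{*} = \{S\}$ and $\mathcal{F} = N_{\mathcal{F}}(S)$ is supersolvable by hypothesis. The only tactical difference is in how the key step is reached: the paper cites \cite[Proposition 1.3]{Su} to realize $N_{\mathcal{F}}(Q)$ by a $p$-supersolvable group and deduces directly that $\mathrm{Aut}_{\mathcal{F}}(Q)$ (hence $\mathrm{Out}_{\mathcal{F}}(Q)$) is $p$-closed, whereas you cite \cite[Proposition 2.3]{Su} to get $N_S(Q) \trianglelefteq N_{\mathcal{F}}(Q)$ and then carry out the extension computation $\alpha c_x \alpha^{-1} = c_{\widehat{\alpha}(x)}$ by hand to see that $\mathrm{Aut}_{N_S(Q)}(Q)$ is normal in $\mathrm{Aut}_{\mathcal{F}}(Q)$; since $\mathrm{Out}_S(Q)$ is already Sylow in $\mathrm{Out}_{\mathcal{F}}(Q)$, these two conclusions are equivalent, and your more hands-on route has the mild advantage of staying entirely within fusion-system language without passing through a group realization.
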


\begin{proof}
	Let $Q \in \mathcal{E}_{\mathcal{F}}^{*}$. Since $N_{\mathcal{F}}(Q)$ is supersolvable, $\mathrm{Aut}_{N_{\mathcal{F}}(Q)}(Q) = \mathrm{Aut}_{\mathcal{F}}(Q)$ is $p$-closed by \cite[Proposition 1.3]{Su}. Hence, $\mathrm{Out}_{\mathcal{F}}(Q)$ is $p$-closed. From \cite[Proposition A.7 (c)]{AKO}, we see that a $p$-closed finite group cannot possess a strongly $p$-embedded subgroup. Consequently, $\mathrm{Out}_{\mathcal{F}}(Q)$ does not possess a strongly $p$-embedded subgroup, and so $Q$ is not $\mathcal{F}$-essential. It follows that $\mathcal{E}_{\mathcal{F}}^{*} = \lbrace S \rbrace$. Applying \cite[Part I, Proposition 4.5]{AKO}, we conclude that $S \trianglelefteq \mathcal{F}$. The proof is now complete since $N_{\mathcal{F}}(S) = \mathcal{F}$ is supersolvable by hypothesis.
\end{proof}

\begin{lemma}
	\label{criterion_FS_zero}
	Let $p$ be a prime number, $\mathcal{F}$ be a saturated fusion system on a finite $p$-group $S$ and $Q \trianglelefteq \mathcal{F}$. Suppose that there is a series $1 = Q_0 \le Q_1 \le \dots \le Q_n = Q$ of subgroups of $Q$ such that $Q_i$ is strongly $\mathcal{F}$-closed for all $0 \le i \le n$ and such that $Q_{i+1}/Q_i$ is cyclic for all $0 \le i < n$. Suppose moreover that $\mathcal{F}/Q$ is supersolvable. Then $\mathcal{F}$ is supersolvable.
\end{lemma}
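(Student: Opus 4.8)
The plan is to glue the given series running from $1$ to $Q$ onto a series running from $Q$ up to $S$, the latter obtained by lifting a supersolvability series of the quotient system $\mathcal{F}/Q$. Since $Q \trianglelefteq \mathcal{F}$, the quotient fusion system $\mathcal{F}/Q$ is defined and, by hypothesis, supersolvable.

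First I would unpack the supersolvability of $\mathcal{F}/Q$: there is a series $1 = \overline{R}_0 \le \overline{R}_1 \le \dots \le \overline{R}_k = S/Q$ of subgroups of $S/Q$ such that each $\overline{R}_i$ is strongly $\mathcal{F}/Q$-closed and each successive quotient $\overline{R}_{i+1}/\overline{R}_i$ is cyclic. By the correspondence theorem, each $\overline{R}_i$ equals $R_i/Q$ for a unique subgroup $R_i$ of $S$ with $Q \le R_i \le S$; in particular $R_0 = Q$ and $R_k = S$. The crucial step is then to promote the $R_i$ to strongly $\mathcal{F}$-closed subgroups: since $Q \trianglelefteq \mathcal{F}$ and $R_i/Q$ is strongly $\mathcal{F}/Q$-closed, Lemma~\ref{lemma_strongly_closed_2} gives that each $R_i$ is strongly $\mathcal{F}$-closed.

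Now I would concatenate the two series into
\begin{equation*}
1 = Q_0 \le Q_1 \le \dots \le Q_n = Q = R_0 \le R_1 \le \dots \le R_k = S.
\end{equation*}
Every term is strongly $\mathcal{F}$-closed: the $Q_i$ by the hypothesis of the lemma, and the $R_i$ by the application of Lemma~\ref{lemma_strongly_closed_2} just described. Moreover every successive quotient is cyclic: the quotients $Q_{i+1}/Q_i$ are cyclic by hypothesis, and for the upper part the third isomorphism theorem gives $R_{i+1}/R_i \cong (R_{i+1}/Q)/(R_i/Q) = \overline{R}_{i+1}/\overline{R}_i$, which is cyclic by construction. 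This series is therefore exactly a witness to the supersolvability of $\mathcal{F}$, completing the argument.

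I do not expect any genuine obstacle in this proof, as it is essentially a bookkeeping argument assembling two ready-made series. The only point requiring care is that Lemma~\ref{lemma_strongly_closed_2} demands $Q \trianglelefteq \mathcal{F}$ rather than merely $Q$ strongly $\mathcal{F}$-closed; this is supplied directly by the hypothesis, and it is also what makes the quotient $\mathcal{F}/Q$ available in the first place. One should also note that the lifted subgroups $R_i$ all contain $Q = Q_n$, so the junction at $Q$ in the concatenated series is legitimate and the two chains meet cleanly.
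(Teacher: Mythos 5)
Your proof is correct and follows essentially the same route as the paper: both take a supersolvability series for $\mathcal{F}/Q$, promote its terms to strongly $\mathcal{F}$-closed subgroups of $S$ via Lemma~\ref{lemma_strongly_closed_2}, and concatenate with the given series from $1$ to $Q$. The only cosmetic difference is that you invoke the correspondence theorem explicitly to lift subgroups of $S/Q$, whereas the paper writes the quotient series directly in the form $V_i/Q$ from the start.
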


\begin{proof}
	Since $\mathcal{F}/Q$ is supersolvable, there is a series $Q = V_0 \le V_1 \le \dots \le V_m = S$ of subgroups of $S$ such that $V_i/Q$ is strongly $\mathcal{F}/Q$-closed for all $0 \le i \le m$ and such that $(V_{i+1}/Q)/(V_i/Q) \cong V_{i+1}/V_i$ is cyclic for all $0 \le i < m$. By Lemma \ref{lemma_strongly_closed_2}, $V_i$ is strongly $\mathcal{F}$-closed for all $0 \le i \le m$. Set $S_i := Q_i$ for all $0 \le i \le n$ and $S_{n+\ell} := V_{\ell}$ for all $1 \le \ell \le m$. Then $1 = S_0 \le S_1 \le \dots \le S_{n+m} = S$. Also, $S_i$ is strongly $\mathcal{F}$-closed for all $0 \le i \le n+m$, and $S_{i+1}/S_i$ is cyclic for all $0 \le i < n+m$. Consequently, $\mathcal{F}$ is supersolvable. 
\end{proof}

For the next lemma, we recall that the \textit{supersolvable hypercentre} of a finite group $G$, denoted by $Z_{\mathcal{U}}(G)$, is the largest normal subgroup of $G$ such that every $G$-chief factor below it is cyclic. 

\begin{lemma}
\label{lemma_skiba} 
Let $p$ be a prime number and $P$ be a nontrivial normal $p$-subgroup of a finite group $G$. Suppose that $P$ has a subgroup $D$ with $1 < D < P$ such that every subgroup of $P$ of order $|D|$ is normal in $G$. If $P$ is a nonabelian $2$-group and $|D| = 2$, suppose moreover that every cyclic subgroup of $P$ of order $4$ is normal in $G$. Then $P \le Z_{\mathcal{U}}(G)$.
\end{lemma}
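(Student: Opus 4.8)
The plan is to argue by induction on $|P|$, using two standard facts about the $\mathcal{U}$-hypercentre. First, $P \le Z_{\mathcal{U}}(G)$ holds if and only if every $G$-chief factor lying inside $P$ is cyclic, i.e. has order $p$. Second, $Z_{\mathcal{U}}$ is transitive along quotients by subgroups of it: if $L \trianglelefteq G$ with $L \le P$ and $|L| = p$, and if $P/L \le Z_{\mathcal{U}}(G/L)$, then $P \le Z_{\mathcal{U}}(G)$ (since $|L| = p$ forces $L \le Z_{\mathcal{U}}(G)$, and the preimage of $Z_{\mathcal{U}}(G/L)$ lies in $Z_{\mathcal{U}}(G)$ once $L \le Z_{\mathcal{U}}(G)$). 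Thus it suffices to (i) produce a $G$-normal subgroup $L \le P$ of order $p$, and (ii) verify that the hypotheses descend to $(G/L, P/L)$ so that induction applies. The base case of the induction will be $|D| = p$.

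For the descent in (ii), first note that $D \trianglelefteq G$, because $|D| = p^d$ and every subgroup of that order is normal by hypothesis; I would choose $L \le D$. When $d \ge 2$, every subgroup of $P/L$ of order $p^{d-1}$ has the form $H/L$ with $L \le H \le P$ and $|H| = p^d$, hence is normal in $G/L$, and $1 < p^{d-1} < |P/L|$ since $p^d < |P|$. So $(G/L, P/L)$ satisfies the same hypothesis with parameter $p^{d-1}$, and induction gives $P/L \le Z_{\mathcal{U}}(G/L)$, completing the inductive step once (i) is available.

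The base case $d = 1$ is the statement that if every subgroup of $P$ of order $p$ is normal in $G$ — and, when $p = 2$ and $P$ is nonabelian, every cyclic subgroup of order $4$ is normal in $G$ — then $P \le Z_{\mathcal{U}}(G)$. This is the hypercentral version of Buckley's theorem on groups all of whose minimal subgroups are normal, and it is exactly here that the extra hypothesis for $p = 2$, $|D| = 2$ is consumed: it neutralizes the generalized quaternion obstruction, in which the unique involution conceals the failure of a $2$-dimensional chief factor to decompose.

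It remains to carry out (i). The subgroup $\Omega_1(Z(P))$ is characteristic in $P$, hence normal in $G$ and nontrivial, so it contains a minimal normal subgroup $N$ of $G$; then $N \le Z(P)$ and $N$ is elementary abelian, say of order $p^e$. It suffices to show $e = 1$, for then $L := N$ is the required subgroup of order $p$. First, $p^e \le |D|$, since otherwise $N$ would contain a proper nontrivial subgroup of order $|D|$ normal in $G$, contradicting minimality. Suppose $e \ge 2$. Because $G$ acts irreducibly on $N$ with $\dim N \ge 2$, it permutes the order-$p$ subgroups of $N$ without fixing any; yet every subgroup of $P$ of order $|D|$ is $G$-invariant. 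The heart of the matter is to turn this clash into a contradiction by exhibiting a subgroup $H \le P$ with $|H| = |D|$ and nontrivial $G$-orbit: using centrality of $N$, one builds $H$ either so that $1 < H \cap N < N$, whence $H \cap N$ is a forbidden $G$-invariant proper subgroup of $N$, or — when every such $H$ meets $N$ trivially — as a suitable ``diagonal'' subgroup of a product $H_0 N$ that $G$ cannot stabilize, since it moves the $N$-component while fixing $H_0$. I expect this last step, ruling out chief factors of dimension $\ge 2$, to be the main obstacle, and within it the prime $2$ to be the delicate case, both because the quaternion structure forces the order-$4$ hypothesis in the base case and because one must check that this order-$4$ condition is correctly propagated through the quotients used in (ii); arranging the induction so that the $p = 2$ bookkeeping is handled uniformly is the part demanding the most care.
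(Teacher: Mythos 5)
Your proposal takes a genuinely different route from the paper, but it does not close. For context: the paper does not prove this lemma at all --- its entire proof is the one line ``This follows from \cite{Skiba}'', the lemma being the special case of Skiba's theorem in which the normal subgroup is itself a $p$-group. Your self-contained induction would therefore be a real contribution if it worked, but it has gaps exactly where the content of the theorem sits. The base case $|D|=p$ is not proved; it is invoked as ``the hypercentral version of Buckley's theorem'', which is itself the hard kernel of the statement (and of \cite{Skiba}), so even a successful induction would only reduce the lemma to a special case of itself plus an unproved citation. More seriously, step (i) --- showing that a minimal normal subgroup $N \le \Omega_1(Z(P))$ of $G$ has order $p$ --- is precisely the step that excludes noncyclic chief factors, i.e.\ it carries the whole theorem, and you leave it as a hoped-for ``diagonal subgroup'' construction, saying yourself that you expect it to be the main obstacle. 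It is not routine: when $N \le \Phi(P)$, or when $|N| = |D|$, producing $H \le P$ with $|H| = |D|$ and $1 < H \cap N < N$, or one that $G$ visibly fails to normalize, needs a real argument, and none is given. (The parts you do verify --- that $|N| \le |D|$, and that subgroups of order $|D|/p$ of $P/L$ lift to $G$-normal subgroups of order $|D|$ of $P$ --- are correct.)

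In addition, the inductive step is actually broken for $p=2$, not merely delicate. Descending from parameter $|D| = 4$ to parameter $2$, the new instance $(G/L, P/L)$ requires, whenever $P/L$ is nonabelian, that every cyclic subgroup of $P/L$ of order $4$ be normal in $G/L$. Such a subgroup is $H/L$ with $|H| = 8$, and $H$ is abelian (since $L \le Z(P)$ and $H/L$ is cyclic), so $H \cong C_4 \times C_2$ or $H \cong C_8$. In the first case $H = \langle a \rangle L$ with $|a|=4$, and normality of $H$ in $G$ follows from your order-$4$ hypothesis; but in the second case $H = \langle x \rangle$ with $L = \langle x^4 \rangle$, and normality of $\langle x^2 \rangle$ in $G$ does not force normality of $\langle x \rangle$: a conjugate $x^g$ is only constrained to be a square root of $x^{\pm 2}$ in $P$, and such square roots need not lie in $\langle x \rangle$. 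Your hypotheses say nothing about subgroups of $P$ of order $8$, so the required condition on $(G/L, P/L)$ is simply not available, and every descent chain for $p=2$ must pass through this unjustified step. You flag this yourself but do not resolve it. As proposed, then, the argument establishes strictly less than the lemma; the missing pieces (the rank-one claim in (i), the $p=2$ propagation, and the base case) are exactly what the citation to \cite{Skiba} supplies in the paper.
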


\begin{proof}
This follows from \cite{Skiba}. 
\end{proof}

\begin{lemma}
	\label{criterion_FS_two}
Let $G$ be a finite group, $p$ be a prime and $S$ be a Sylow $p$-subgroup of $G$. Suppose that, for any proper subgroup $H$ of $G$ with $O_p(G) < S \cap H$ and $S \cap H \in \mathrm{Syl}_p(H)$, the fusion system $\mathcal{F}_{S \cap H}(H)$ is supersolvable. Suppose moreover that $O_p(G) \le Z_{\mathcal{U}}(G)$. Then $\mathcal{F}_S(G)$ is supersolvable. 
\end{lemma}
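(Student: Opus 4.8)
The plan is to combine Lemma~\ref{criterion_FS_one} with a reduction that peels off $O_p(G)$ by means of Lemma~\ref{criterion_FS_zero}. Write $\mathcal{F} := \mathcal{F}_S(G)$. The two hypotheses play complementary roles: the assumption $O_p(G) \le Z_{\mathcal{U}}(G)$ will supply the bottom part of a supersolvable series inside $O_p(G)$, while the assumption on the local fusion systems $\mathcal{F}_{S \cap H}(H)$ will be fed into Lemma~\ref{criterion_FS_one} through the normalizer fusion systems $N_{\mathcal{F}}(Q)$. I would first dispose of the case $O_p(G) = 1$ and then reduce the general case to it.

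First I would reduce to the case $O_p(G) = 1$. Set $P := O_p(G)$. Since $P \trianglelefteq G$ and $P \le S$, every $\mathcal{F}$-morphism, being induced by $G$-conjugation, preserves $P$, so $P \trianglelefteq \mathcal{F}$. Because $P \le Z_{\mathcal{U}}(G)$, a chief series of $G$ refined through $P$ yields a series $1 = Q_0 \le \dots \le Q_n = P$ with each $Q_i \trianglelefteq G$ (hence each $Q_i \le S$ strongly $\mathcal{F}$-closed) and each $Q_{i+1}/Q_i$ cyclic. By Lemma~\ref{criterion_FS_zero} it therefore suffices to show that $\mathcal{F}/P = \mathcal{F}_{S/P}(G/P)$ is supersolvable. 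Writing $\bar{G} := G/P$ and $\bar{S} := S/P$, I claim $\bar{G}$ satisfies the hypotheses of the lemma with $O_p(\bar{G}) = 1$: indeed $O_p(\bar{G}) = 1$ makes the second hypothesis vacuous, and a proper subgroup $\bar{H} = H/P$ of $\bar{G}$ with $O_p(\bar{G}) = 1 < \bar{S} \cap \bar{H}$ and $\bar{S} \cap \bar{H} \in \mathrm{Syl}_p(\bar{H})$ corresponds to a proper $H \ge P$ with $P < S \cap H$ and $S \cap H \in \mathrm{Syl}_p(H)$; the given hypothesis makes $\mathcal{F}_{S \cap H}(H)$ supersolvable, and since $P$ is strongly $\mathcal{F}_{S \cap H}(H)$-closed, Lemma~\ref{lemma_supersolvable_FS_2}(2) shows that $\mathcal{F}_{S \cap H}(H)/P = \mathcal{F}_{\bar{S} \cap \bar{H}}(\bar{H})$ is supersolvable. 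Thus, once the case $O_p(\bar{G}) = 1$ is settled for $\bar{G}$, we conclude that $\mathcal{F}/P$ is supersolvable and hence so is $\mathcal{F}$.

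It then remains to treat the case $O_p(G) = 1$, which I would handle via Lemma~\ref{criterion_FS_one} by verifying that $N_{\mathcal{F}}(Q)$ is supersolvable for every $Q \in \mathcal{E}_{\mathcal{F}}^{*}$ (it suffices to check fully $\mathcal{F}$-normalized representatives, since essentiality and the relevant automizers are invariant under $\mathcal{F}$-conjugacy). The key identity is $N_{\mathcal{F}}(Q) = \mathcal{F}_{N_S(Q)}(N_G(Q))$, together with the fact that, for fully normalized $Q$, one has $N_S(Q) = S \cap N_G(Q) \in \mathrm{Syl}_p(N_G(Q))$. If $Q = S$ and $S \ne 1$, then $S \ne O_p(G) = 1$ forces $N_G(S) < G$, and applying the hypothesis to $H = N_G(S)$, where $S \cap H = S$ and $O_p(G) = 1 < S$, gives that $N_{\mathcal{F}}(S) = \mathcal{F}_S(N_G(S))$ is supersolvable; the case $S = 1$ is trivial. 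If $Q$ is essential, then $Q \ne 1$, so $N_G(Q) = G$ would force $Q \le O_p(G) = 1$, a contradiction; hence $N_G(Q) < G$, while $1 = O_p(G) < Q \le N_S(Q)$, so the hypothesis applied to $H = N_G(Q)$ yields that $N_{\mathcal{F}}(Q) = \mathcal{F}_{N_S(Q)}(N_G(Q))$ is supersolvable. Lemma~\ref{criterion_FS_one} then gives that $\mathcal{F}$ is supersolvable.

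The main obstacle I anticipate is bookkeeping rather than a deep difficulty: verifying cleanly that the hypotheses transfer to the quotient $\bar{G}$, in particular matching up the conditions ``$H$ proper'', ``$S \cap H \in \mathrm{Syl}_p(H)$'' and ``$O_p(G) < S \cap H$'' with their barred analogues, and checking that supersolvability of $\mathcal{F}_{S\cap H}(H)$ descends to $\mathcal{F}_{S \cap H}(H)/O_p(G)$ via Lemma~\ref{lemma_supersolvable_FS_2}(2). Passing to $\bar{G}$ is what guarantees $O_p(\bar G) = 1$, which in turn is exactly what makes the numerical conditions ``$O_p < N_S(Q)$'' automatic and lets the hypothesis be applied to every relevant normalizer. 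The conceptual crux is the clean division of labour between the two hypotheses: $O_p(G) \le Z_{\mathcal{U}}(G)$ builds the supersolvable series below $O_p(G)$, and the condition on the local fusion systems $\mathcal{F}_{S \cap H}(H)$ is precisely what is needed to run Lemma~\ref{criterion_FS_one} after quotienting out $O_p(G)$.
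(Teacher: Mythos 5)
Your proof is correct and takes essentially the same route as the paper's: both use the series supplied by $O_p(G) \le Z_{\mathcal{U}}(G)$ together with Lemma \ref{criterion_FS_zero} to reduce to showing that $\mathcal{F}_S(G)/O_p(G) = \mathcal{F}_{S/O_p(G)}(G/O_p(G))$ is supersolvable, and then establish this via Lemma \ref{criterion_FS_one}, applying the hypothesis to normalizers $N_G(Q) < G$ of subgroups $Q$ with $O_p(G) < Q \le S$ and descending supersolvability to the quotient with Lemma \ref{lemma_supersolvable_FS_2}(2). The only difference is bookkeeping: you transfer the whole hypothesis package to $G/O_p(G)$ (where the $p$-core is trivial) and then run the criterion there, whereas the paper verifies the hypotheses of Lemma \ref{criterion_FS_one} for the quotient system directly by lifting its essential subgroups back to $G$ — the underlying verifications are identical.
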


\begin{proof}
Set $\mathcal{F} := \mathcal{F}_S(G)$, $\overline G := G/O_p(G)$ and $\overline{\mathcal{F}} := \mathcal{F}_{\overline S}(\overline G)$. 

Since $O_p(G) \le Z_{\mathcal{U}}(G)$, there is a series $1 = U_0 \le U_1 \le \dots \le U_n = O_p(G)$ of subgroups of $O_p(G)$ such that $U_i \trianglelefteq G$ for all $0 \le i \le n$ and such that $U_{i+1}/U_i$ is cyclic for all $0 \le i < n$. In particular, $U_i$ is strongly $\mathcal{F}$-closed for all $0 \le i \le n$. So, by Lemma \ref{criterion_FS_zero}, it suffices to show that $\mathcal{F}/O_p(G) = \overline{\mathcal{F}}$ is supersolvable. This is clear when $S = O_p(G)$, and so we assume that $O_p(G) \ne S$. 

Let $O_p(G) < Q \le S$ such that $\overline Q$ is fully $\overline{\mathcal{F}}$-normalized. Then $N_{\overline S}(\overline Q) = \overline{N_S(Q)}$ is a Sylow $p$-subgroup of $N_{\overline G}(\overline Q) = \overline{N_G(Q)}$. Hence, $N_S(Q)$ is a Sylow $p$-subgroup of $N_G(Q)$. Since $O_p(G) < Q$, we have $N_G(Q) < G$. Moreover, $O_p(G) < Q \le S \cap N_G(Q)$, and $S \cap N_G(Q) = N_S(Q)$ is a Sylow $p$-subgroup of $N_G(Q)$. So, by hypothesis, the fusion system $N_{\mathcal{F}}(Q) = \mathcal{F}_{N_S(Q)}(N_G(Q))$ is supersolvable. Lemma \ref{lemma_supersolvable_FS_2} (2) implies that $N_{\mathcal{F}}(Q)/O_p(G) = \mathcal{F}_{\overline{N_S(Q)}}(\overline{N_G(Q)}) = N_{\overline{\mathcal{F}}}(\overline Q)$ is supersolvable.

Now, let $O_p(G) \le R \le S$ such that $\overline{R} \in \mathcal{E}_{\overline{\mathcal F}}^{*}$. Then $\overline R$ is $\overline{\mathcal F}$-centric and fully $\overline{\mathcal F}$-normalized. Since $\overline S \ne 1$, we have $\overline R \ne 1$ and thus $O_p(G) < R$, and the preceding paragraph implies that $N_{\overline{\mathcal F}}(\overline R)$ is supersolvable. Lemma \ref{criterion_FS_one} yields that $\overline{\mathcal F}$ is supersolvable, as required. 
\end{proof}



\begin{lemma}
\label{criterion_FS_three}
Let $p$ be a prime number and $\mathcal{F}$ be a saturated fusion system on a finite $p$-group $S$. Suppose that, whenever $R$ is a subgroup of $S$ with $O_p(\mathcal{F}) < R$, any proper saturated subsystem of $\mathcal{F}$ on $R$ is supersolvable. Suppose moreover that there is a series $1 = S_0 \le S_1 \le \dots \le S_n = O_p(\mathcal{F})$ of subgroups of $O_p(\mathcal{F})$ such that $S_i$ is strongly $\mathcal{F}$-closed for all $0 \le i \le n$ and such that $S_{i+1}/S_i$ is cyclic for all $0 \le i < n$. Then $\mathcal{F}$ is supersolvable. 
\end{lemma}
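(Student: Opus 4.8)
The plan is to carry over the argument of Lemma \ref{criterion_FS_two} to the abstract setting, replacing ``$\mathcal{F}_{S \cap H}(H)$ for a proper subgroup $H$'' by ``a proper saturated subsystem of $\mathcal{F}$''. Write $\overline{\mathcal{F}} := \mathcal{F}/O_p(\mathcal{F})$ and $\overline{S} := S/O_p(\mathcal{F})$. Since $O_p(\mathcal{F}) \trianglelefteq \mathcal{F}$ and the given series $1 = S_0 \le \dots \le S_n = O_p(\mathcal{F})$ has strongly $\mathcal{F}$-closed terms with cyclic quotients, Lemma \ref{criterion_FS_zero} reduces the whole statement to proving that $\overline{\mathcal{F}}$ is supersolvable. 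The case $O_p(\mathcal{F}) = S$ is immediate (then $\overline{S} = 1$), so I would assume $O_p(\mathcal{F}) < S$ and aim to apply Lemma \ref{criterion_FS_one} to $\overline{\mathcal{F}}$.

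The main step is to show that $N_{\overline{\mathcal{F}}}(\overline{R})$ is supersolvable for every subgroup $O_p(\mathcal{F}) < R \le S$ whose image $\overline{R}$ is fully $\overline{\mathcal{F}}$-normalized. Because $O_p(\mathcal{F})$ is strongly $\mathcal{F}$-closed, the $\mathcal{F}$-conjugates of $R$ all contain $O_p(\mathcal{F})$ and correspond to the $\overline{\mathcal{F}}$-conjugates of $\overline{R}$; hence $R$ is itself fully $\mathcal{F}$-normalized, $N_{\mathcal{F}}(R)$ is a saturated fusion system on $N_S(R)$, and $N_{\overline{\mathcal{F}}}(\overline{R}) = N_{\mathcal{F}}(R)/O_p(\mathcal{F})$. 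The crucial observation is that $N_{\mathcal{F}}(R)$ is a \emph{proper} saturated subsystem of $\mathcal{F}$ on $N_S(R)$: if $N_S(R) < S$ this is automatic, while if $N_S(R) = S$ then $R \trianglelefteq S$, and $O_p(\mathcal{F}) < R$ forces $R \not\trianglelefteq \mathcal{F}$ (otherwise $R \le O_p(\mathcal{F})$), so $N_{\mathcal{F}}(R) \ne \mathcal{F}$. Since also $O_p(\mathcal{F}) < R \le N_S(R)$, the hypothesis applies and yields that $N_{\mathcal{F}}(R)$ is supersolvable; as $O_p(\mathcal{F})$ is strongly $N_{\mathcal{F}}(R)$-closed, Lemma \ref{lemma_supersolvable_FS_2} (2) then gives that $N_{\overline{\mathcal{F}}}(\overline{R}) = N_{\mathcal{F}}(R)/O_p(\mathcal{F})$ is supersolvable.

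To conclude I would take an arbitrary $\overline{R} \in \mathcal{E}_{\overline{\mathcal{F}}}^{*}$; such $\overline{R}$ is fully $\overline{\mathcal{F}}$-normalized and, since $\overline{S} \ne 1$, nontrivial, so its preimage $R$ satisfies $O_p(\mathcal{F}) < R$ and the previous paragraph shows that $N_{\overline{\mathcal{F}}}(\overline{R})$ is supersolvable. Lemma \ref{criterion_FS_one} then gives that $\overline{\mathcal{F}}$ is supersolvable, and the reduction of the first paragraph finishes the proof.

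I expect the only real subtlety --- and the step that looks circular at first glance --- to be the term $\overline{R} = \overline{S}$ of $\mathcal{E}_{\overline{\mathcal{F}}}^{*}$, that is, the case $R = S$. Here $N_{\mathcal{F}}(S)$ is the subsystem generated by the restrictions of $\mathrm{Aut}_{\mathcal{F}}(S)$, and the point is that it is a \emph{proper} subsystem of $\mathcal{F}$ precisely because $S$ is not normal in $\mathcal{F}$ (equivalently $O_p(\mathcal{F}) < S$); this is what lets the ``proper subsystem'' hypothesis feed $N_{\overline{\mathcal{F}}}(\overline{S})$ into Lemma \ref{criterion_FS_one} without circularity. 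Beyond this, the work is the routine bookkeeping of the quotient, namely the identity $N_{\overline{\mathcal{F}}}(\overline{R}) = N_{\mathcal{F}}(R)/O_p(\mathcal{F})$ and the correspondence between full normalization in $\mathcal{F}$ and in $\overline{\mathcal{F}}$.
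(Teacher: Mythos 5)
Your proposal is correct and follows essentially the same route as the paper's proof: reduce via Lemma \ref{criterion_FS_zero} to showing $\mathcal{F}/O_p(\mathcal{F})$ is supersolvable, show for each $O_p(\mathcal{F}) < Q \le S$ with $\overline{Q}$ fully normalized that $N_{\mathcal{F}}(Q)$ is a proper saturated subsystem (hence supersolvable by hypothesis) and pass to the quotient via Lemma \ref{lemma_supersolvable_FS_2}~(2) and the identity $N_{\mathcal{F}}(Q)/O_p(\mathcal{F}) = N_{\overline{\mathcal{F}}}(\overline{Q})$, then conclude with Lemma \ref{criterion_FS_one} applied to $\mathcal{E}_{\overline{\mathcal{F}}}^{*}$. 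The only difference is presentational: the quotient bookkeeping you describe as routine (full normalization descending to $\mathcal{F}$, and the normalizer-quotient identity) is handled in the paper by citing \cite[Proposition 5.58~(iii)]{Craven} and \cite[Exercise 5.11]{Craven}.
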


\begin{proof}
Set $\overline S := S/O_p(\mathcal{F})$ and $\overline{\mathcal{F}} := \mathcal{F}/O_p(\mathcal{F})$. By Lemma \ref{criterion_FS_zero}, it suffices to show that $\overline{\mathcal{F}}$ is supersolvable. This is clear when $O_p(\mathcal{F}) = S$, and so we assume that $O_p(\mathcal{F}) \ne S$. 	
	
Let $O_p(\mathcal{F}) < Q \le S$ such that $\overline{Q}$ is fully $\overline{\mathcal F}$-normalized. Then, by \cite[Proposition 5.58 (iii)]{Craven}, $Q$ is fully $\mathcal{F}$-normalized. Hence, $N_{\mathcal{F}}(Q)$ is a saturated subsystem of $\mathcal{F}$ on $N_S(Q)$. Since $O_p(\mathcal{F}) < Q$, we have $O_p(\mathcal{F}) < N_S(Q)$ and $N_{\mathcal{F}}(Q) \ne \mathcal{F}$. So, by hypothesis, the fusion system $N_{\mathcal{F}}(Q)$ is supersolvable. Lemma \ref{lemma_supersolvable_FS_2} (2) implies that $N_{\mathcal{F}}(Q)/O_p(\mathcal{F})$ is supersolvable. By \cite[Exercise 5.11]{Craven}, we have $N_{\mathcal{F}}(Q)/O_p(\mathcal{F}) = N_{\overline{\mathcal{F}}}(\overline Q)$, and so $N_{\overline{\mathcal{F}}}(\overline Q)$ is supersolvable. 

Let $O_p(\mathcal{F}) \le R \le S$ such that $\overline{R} \in \mathcal{E}_{\overline{\mathcal F}}^{*}$. Then $\overline R$ is fully $\overline{\mathcal{F}}$-normalized and $\overline{\mathcal{F}}$-centric. Since $\overline S \ne 1$, we have $\overline R \ne 1$ and thus $O_p(\mathcal F) < R$, and the preceding paragraph implies that $N_{\overline{\mathcal F}}(\overline R)$ is supersolvable. Lemma \ref{criterion_FS_one} yields that $\overline{\mathcal{F}}$ is supersolvable, as required.
\end{proof}

\begin{lemma}
\label{lemma_extraspecial} 
Let $p$ be an odd prime number and $SL_2(p) \le H \le GL_2(p)$. Let $V$ denote the group consisting of all row vectors $\begin{pmatrix} x & y \end{pmatrix}$, where $x,y \in \mathbb{F}_p$, together with the componentwise addition. Moreover, let $G = V \rtimes H$ be the outer semidirect product of $V$ and $H$ with respect to the natural action of $H$ on $V$, i.e. $G$ is the group consisting of all pairs $(h,v)$ with $h \in H$ and $v \in V$, together with the multiplication given by 
\begin{equation*} 
	(h_1,v_1) \cdot (h_2,v_2) := (h_1h_2, v_1h_2+v_2)
\end{equation*} 
for all $h_1,h_2 \in H$, $v_1,v_2 \in V$. Let 
\begin{equation*}
U := \left\lbrace \begin{pmatrix} 1 & a \\ 0 & 1 \end{pmatrix} \ : \ a \in \mathbb{F}_p \right\rbrace \le SL_2(p) \le H
\end{equation*}
and 
\begin{equation*} 
S := \lbrace (u,v) \ : \ u \in U, v \in V \rbrace.	
\end{equation*} 
Then $S$ is a Sylow $p$-subgroup of $G$, and there is a maximal subgroup of $S$ which is not weakly $\mathcal{F}_S(G)$-closed. 
\end{lemma}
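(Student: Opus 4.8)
The plan is to determine the structure of $S$ and then exhibit one maximal subgroup of $S$ together with an explicit conjugating element witnessing the failure of weak closure. First I would check that $S$ is a subgroup of $G$ of order $p^3$ and that it is Sylow: closure of $S$ under the given multiplication is immediate since $U \le H$ and $v_1 u_2 + v_2 \in V$, so $|S| = |U|\,|V| = p^3$, and because the full power of $p$ dividing $|GL_2(p)| = p(p-1)^2(p+1)$ is just $p$, we have $U \in \mathrm{Syl}_p(H)$ and hence $S \in \mathrm{Syl}_p(G)$. (One also checks that $S$ is extraspecial of exponent $p$, using $\sum_{i=0}^{p-1} u^i = 0$ over $\mathbb{F}_p$ for odd $p$, but for the argument all I need is $|S| = p^3$, so that the maximal subgroups of $S$ are exactly its subgroups of order $p^2$.) I would then record, for each $s \in \mathbb{F}_p$, the set $M_s := \lbrace (u^k, (sk, y)) : k, y \in \mathbb{F}_p \rbrace$; a one-line computation shows each $M_s$ is closed under multiplication, hence is a maximal subgroup of $S$, and that $M_s \ne M_{s'}$ for $s \ne s'$ (the element $(u, (s,0))$ lies in $M_s$ but not in $M_{s'}$).

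The heart of the matter is that, to violate weak closure of a maximal subgroup $M$, I must find $g \in G$ with $M^g \le S$ and $M^g \ne M$, and the real difficulty is keeping the conjugate inside $S$: a general $h \in H$ conjugates $U$ out of upper-unitriangular shape and therefore moves $S$ itself. I would avoid this by taking $h$ diagonal. Concretely, fix $\alpha \in \mathbb{F}_p^{\times}$, set $h := \mathrm{diag}(\alpha, \alpha^{-1}) \in SL_2(p) \le H$ and $g := (h, 0) \in G$. Since a diagonal matrix normalizes both $U$ and $V$, the element $g$ normalizes $S$, so conjugation by $g$ induces an automorphism of $S$ belonging to $\mathrm{Aut}_{\mathcal{F}_S(G)}(S)$; in particular $M^g \le S$ for every $M \le S$, with no further checking. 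Using the conjugation formula $g^{-1}(k, w) g = (h^{-1} k h,\, w h)$ (which is valid here because the $V$-component of $g$ is trivial), a direct computation gives $h^{-1} u h = u^{\alpha^{-2}}$ and $(sk, y) h = (sk\alpha,\, y\alpha^{-1})$, and tracking these through the description of $M_s$ yields the transformation law $M_s^{\,g} = M_{s\alpha^3}$.

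Taking $s = 1$, I obtain $M_1^{\,g} = M_{\alpha^3}$, a maximal subgroup of $S$ that differs from $M_1$ precisely when $\alpha^3 \ne 1$; for such an $\alpha$, conjugation by $g$ restricts to an $\mathcal{F}_S(G)$-morphism $M_1 \to S$ with image $M_{\alpha^3} \ne M_1$, so $M_1$ is not weakly $\mathcal{F}_S(G)$-closed, as required. The only genuinely $p$-dependent input is the existence of $\alpha \in \mathbb{F}_p^{\times}$ with $\alpha^3 \ne 1$, and this holds for every odd prime $p$: the cubing endomorphism of the cyclic group $\mathbb{F}_p^{\times}$ has kernel of order $\gcd(3, p-1) \le 3$, which is strictly less than $|\mathbb{F}_p^{\times}| = p - 1$ (note $p - 1 \ne 3$), so not every element is a cube root of unity; for $p = 3$ one may simply take $\alpha = 2$. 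I expect the main obstacle to be bookkeeping rather than anything conceptual, namely arranging for the conjugate to remain inside $S$ (handled by the diagonal choice of $h$) and correctly deriving the slope transformation $s \mapsto s\alpha^3$; once this law is in place, the existence of a suitable $\alpha$ makes the conclusion immediate.
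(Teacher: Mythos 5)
Your proof is correct and takes essentially the same route as the paper: the same Sylow count, the same maximal subgroup (your $M_1$ is exactly the paper's $S_1 = \langle r,s \rangle$ with $r = (A,a)$, $s = (B,b)$), and conjugation by an element of the form $(m, 0_V)$ with $m \in SL_2(p)$ normalizing $U$, so that the conjugate stays inside $S$. The only difference is the choice of conjugator and bookkeeping: the paper uses $m = \begin{pmatrix} -1 & 1 \\ 0 & -1 \end{pmatrix}$ and derives the coordinate contradiction $-1 = 1$ for odd $p$, whereas you use the diagonal element $\mathrm{diag}(\alpha,\alpha^{-1})$ and the transformation law $M_s \mapsto M_{s\alpha^3}$ together with the existence of $\alpha$ satisfying $\alpha^3 \neq 1$.
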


\begin{proof}
Since  $|SL_2(p)| = (p^2-1)p$ by \cite[Kapitel II, Hilfssatz 6.2 (2)]{Huppert} and since $|H : SL_2(p)|$ is not divisible by $p$, the largest power of $p$ dividing $|H|$ is $p$. Therefore, $p^3$ is the largest power of $p$ dividing $|G|$. Clearly, $S$ is a subgroup of $G$ of order $p^3$. Thus $S \in \mathrm{Syl}_p(G)$. 

Let 
\begin{equation*}
A := \begin{pmatrix} 1 & 1 \\ 0 & 1 \end{pmatrix} \in U, \ \ \ \ B := I_2 = \begin{pmatrix} 1 & 0 \\ 0 & 1 \end{pmatrix} \in U,
\end{equation*}
and
\begin{equation*}
a := \begin{pmatrix} 1 & 0 \end{pmatrix} \in V, \ \ \ \, b := \begin{pmatrix} 0 & 1 \end{pmatrix} \in V.  
\end{equation*}
Set $r := (A,a) \in S$ and $s := (B,b) \in S$. A direct calculation shows that $|\langle r \rangle| = |\langle s \rangle| = p$, $rs = sr$ and $\langle r \rangle \cap \langle s \rangle = 1$. Therefore, $S_1 := \langle r,s \rangle = \langle r \rangle \langle s \rangle$ has order $p^2$, whence $S_1$ is maximal in $S$. 

We show now that $S_1$ is not weakly $\mathcal{F}_S(G)$-closed. Let 
\begin{equation*} 
M := \begin{pmatrix} -1 & 1 \\ 0 & -1 \end{pmatrix} \in SL_2(p) \le H
\end{equation*} 
and $g := (M,0_V) \in G$. Then
\begin{equation*} 
g^{-1}rg = (A,b-a) \in S
\end{equation*}
and 
\begin{equation*} 
g^{-1}sg = (B,-b) \in S.
\end{equation*} 
Thus $g^{-1} S_1 g = \langle g^{-1} r g, g^{-1} s g \rangle \le S$. Assume that $S_1 = g^{-1}S_1g$. Then $g^{-1}rg \in S_1$, and so there exist $0 \le i, j < p$ with $g^{-1}rg = r^i s^j$. Then $A = A^i$ and hence $i = 1$. It follows that $(A,b-a) = g^{-1}rg = rs^j = (A,a+jb)$, whence $\begin{pmatrix} -1 & 1 \end{pmatrix} = b-a = a+jb = \begin{pmatrix} 1 & j \end{pmatrix}$ and thus $-1 = 1$. This is a contradiction since $p$ is odd. So we have $S_1 \ne g^{-1}S_1g \le S$, and this shows that $S_1$ is not weakly $\mathcal{F}_S(G)$-closed. 
\end{proof}

The next lemma is certainly well-known, but we include a proof for the convenience of the reader. 

\begin{lemma}
	\label{Sylow_GL} 
	Let $p$ be an odd prime number and $n \in \lbrace 2,3 \rbrace$. Then the following hold: 
	\begin{enumerate}
		\item[\textnormal{(1)}] The Sylow $p$-subgroups of $GL_n(p)$ have exponent $p$.
		\item[\textnormal{(2)}] If $H$ is a nontrivial cyclic $p$-subgroup of $GL_n(p)$, then $H \cong C_p$.
	\end{enumerate}
\end{lemma}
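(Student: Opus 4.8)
The plan is to treat the two values of $n$ separately, with the case $n = 2$ being essentially trivial and the case $n = 3$ carrying the real content. First I would compute the $p$-part of $|GL_n(p)|$. Writing $|GL_n(p)| = \prod_{k=0}^{n-1}(p^n - p^k)$ and factoring $p^n - p^k = p^k(p^{n-k}-1)$ with $p \nmid (p^{n-k}-1)$, the largest power of $p$ dividing $|GL_n(p)|$ is $p^{0+1+\cdots+(n-1)} = p^{n(n-1)/2}$. For $n = 2$ this equals $p$, so a Sylow $p$-subgroup has order $p$, and both statements are immediate. Thus the work concentrates on $n = 3$, where the Sylow $p$-subgroups have order $p^3$.

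For $n = 3$ I would exhibit a concrete Sylow $p$-subgroup as the group $U$ of upper unitriangular $3 \times 3$ matrices over $\mathbb{F}_p$, which has order $p^3$ and hence lies in $\mathrm{Syl}_p(GL_3(p))$. The key step is to show that $U$ has exponent $p$. Writing an element of $U$ as $M = I_3 + N$ with $N$ strictly upper triangular, one has $N^3 = 0$, so the binomial expansion collapses to $M^p = (I_3 + N)^p = I_3 + pN + \binom{p}{2}N^2$. In characteristic $p$ the term $pN$ vanishes, and since $p$ is odd, $\binom{p}{2} = p(p-1)/2$ is divisible by $p$, so $\binom{p}{2}N^2$ vanishes as well; hence $M^p = I_3$. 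Thus every element of $U$ has order dividing $p$, and since $U$ is a nontrivial $p$-group it has exponent exactly $p$. As all Sylow $p$-subgroups are conjugate, every Sylow $p$-subgroup of $GL_3(p)$ has exponent $p$, which proves (1).

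Part (2) would then follow formally from (1): a nontrivial cyclic $p$-subgroup $H$ is contained in some $P \in \mathrm{Syl}_p(GL_n(p))$, and since $P$ has exponent $p$, every element of $H$ has order dividing $p$; a nontrivial cyclic group of exponent dividing $p$ is isomorphic to $C_p$.

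The only delicate point, and the place where the hypothesis that $p$ is odd is genuinely used, is the divisibility $p \mid \binom{p}{2}$. This holds precisely because $p$ odd forces $(p-1)/2$ to be an integer; for $p = 2$ the group $U$ is isomorphic to the dihedral group of order $8$ and has exponent $4$, so the statement fails there. Apart from this observation the argument is routine.
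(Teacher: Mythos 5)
Your proof is correct and takes essentially the same approach as the paper: compute the $p$-part of $|GL_n(p)|$ (giving order $p$ for $n=2$), exhibit the upper unitriangular group $U$ as a Sylow $p$-subgroup of $GL_3(p)$, verify it has exponent $p$, and deduce (2) formally. The only cosmetic difference is that you obtain $M^p = I_3$ via the binomial expansion $(I_3+N)^p$ with $N^3 = 0$, while the paper proves an explicit formula for $x^k$ by induction; both verifications hinge on exactly the same point, namely that $p$ divides $\binom{p}{2}$ because $p$ is odd.
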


\begin{proof}
By \cite[Kapitel II, Hilfssatz 6.2 (1)]{Huppert}, we have $|GL_2(p)| = (p^2-1)(p^2-p) = p(p^2-1)(p-1)$. Hence, the Sylow $p$-subgroups of $GL_2(p)$ have order $p$. In particular, they have exponent $p$, and so (1) holds for $n = 2$. 

Again by \cite[Kapitel II, Hilfssatz 6.2 (1)]{Huppert}, we have $|GL_3(p)| = (p^3-1)(p^3-p)(p^3-p^2) = p^3(p^3-1)(p^2-1)(p-1)$. Hence, the Sylow $p$-subgroups of $GL_3(p)$ have order $p^3$. Set 
\begin{equation*} 
U := \left \lbrace \begin{pmatrix} 1 & a & b \\ 0 & 1 & c \\ 0 & 0 & 1 \end{pmatrix} \ : \ a, b, c \in \mathbb{F}_p \right\rbrace.
\end{equation*} 
Then $U$ is easily seen to be a subgroup of $GL_3(p)$ with order $p^3$. In other words, $U$ is a Sylow $p$-subgroup of $GL_3(p)$. Let $a,b,c \in \mathbb{F}_p$ and 
\begin{equation*}
	x := \begin{pmatrix} 1 & a & b \\ 0 & 1 & c \\ 0 & 0 & 1 \end{pmatrix} \in U.
\end{equation*}
One can show by induction that 
\begin{equation*}
x^k = \begin{pmatrix} 1 & ka & kb + \frac{(k-1)k}{2}ac \\ 0 & 1 & kc \\ 0 & 0 & 1 \end{pmatrix} 
\end{equation*}
for every positive integer $k$. In particular, $x^p$ is the identity matrix $I_3$. Since $x$ was an arbitrarily chosen element of $U$, it follows that $U$ has exponent $p$. Hence, any Sylow $p$-subgroup of $GL_3(p)$ has exponent $p$, and so (1) holds for $n = 3$. 

Now, let $n \in \lbrace 2,3 \rbrace$ and $H$ be a nontrivial cyclic $p$-subgroup of $GL_n(p)$. Then $H = \langle y \rangle$ for some nontrivial $p$-element $y$ of $GL_n(p)$. By (1), $y$ has order $p$. Thus $H = \langle y \rangle \cong C_p$, and so (2) holds. 
\end{proof}

Recall that a group $G$ is called \textit{minimal nonnilpotent} if any proper subgroup of $G$ is nilpotent, while $G$ itself is not nilpotent. 

\begin{lemma}
\label{lemma_minimal_non_nilpotent}
Let $G$ be a minimal nonnilpotent finite group and $S$ be a nonnormal Sylow subgroup of $G$. Then $S$ is cyclic. 
\end{lemma}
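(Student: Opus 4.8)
The plan is to reduce the lemma to the classical structure theorem for minimal nonnilpotent groups, due to Schmidt (see, e.g., \cite{Huppert}). That theorem says that a minimal nonnilpotent finite group $G$ has order $p^a q^b$ for two distinct primes $p$ and $q$, that $G$ has a normal Sylow subgroup for one of them, say a normal Sylow $p$-subgroup $P$, and that its Sylow $q$-subgroups are cyclic; in fact $G = PQ$ with $Q$ a cyclic Sylow $q$-subgroup. Granting this, the statement is immediate: since $P$ is the unique (hence normal) Sylow $p$-subgroup and the only primes dividing $|G|$ are $p$ and $q$, the nonnormal Sylow subgroup $S$ cannot be $P$, so $S$ must be a Sylow $q$-subgroup. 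By Sylow's theorem $S$ is conjugate to $Q$, and therefore $S$ is cyclic. Thus I would cite Schmidt's theorem and append this two-line deduction.

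The part of Schmidt's theorem that actually produces the word ``cyclic'' is short, and I would spell it out if a more self-contained proof were wanted. Assume the remaining structure, namely that $G = PQ$ with $P \trianglelefteq G$ a Sylow $p$-subgroup and $Q$ a Sylow $q$-subgroup, and that $G$ is not nilpotent. Suppose $Q$ were not cyclic. Then $Q$ has two distinct maximal subgroups $M_1, M_2$, each of index $q$ and normal in $Q$, and $M_1 M_2 = Q$. For each $i$ the set $P M_i$ is a subgroup (as $P \trianglelefteq G$) of index $q$ in $G$, hence a proper subgroup, hence nilpotent by hypothesis; since $P$ and $M_i$ are its Sylow subgroups for the distinct primes $p$ and $q$, we get $[P, M_i] = 1$. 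Therefore $Q = \langle M_1, M_2 \rangle \le C_G(P)$, so $G = P \times Q$ is nilpotent, a contradiction. Hence $Q$ is cyclic.

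The genuine content, and the main obstacle, is not the cyclicity argument above but the underlying structure it rests on: that a minimal nonnilpotent group is solvable, is divisible by exactly two primes, and has a normal Sylow subgroup complemented by a Sylow subgroup of the second prime. Establishing this is the delicate, classical part (handled via the normal $p$-complement theorems of Burnside, Frobenius and Tate), and it is precisely what makes citing \cite{Huppert} the efficient route. Once that structure is in hand, identifying the nonnormal Sylow subgroup $S$ as a Sylow $q$-subgroup and invoking the cyclicity of $Q$ finishes the proof in a couple of lines.
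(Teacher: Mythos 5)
Your proposal is correct and takes essentially the same approach as the paper: its proof likewise invokes Schmidt's structure theorem in the form of \cite[Kapitel III, Satz 5.2]{Huppert} and finishes with exactly your two-line deduction that a nonnormal Sylow subgroup must be a Sylow $q$-subgroup and hence cyclic. Your additional self-contained derivation of the cyclicity of $Q$ is a correct supplement, but the paper does not include it.
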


\begin{proof}
By \cite[Kapitel III, Satz 5.2]{Huppert}, we have $|G| = p^aq^b$ with distinct prime numbers $p$, $q$ and positive integers $a$, $b$, where $G$ has a normal Sylow $p$-subgroup and cyclic Sylow $q$-subgroups. Since $S$ is not normal in $G$, we have that $S$ is a Sylow $q$-subgroup of $G$. Consequently, $S$ is cyclic. 
\end{proof}

The next lemma is due to Oliver \cite{Oliver}. It will play an important role in the proof of Theorem \ref{third_main_result}. 

\begin{lemma}
\label{lemma_oliver}
(\cite[Lemma 1.11]{Oliver}) Let $p$ be a prime number, $A$ be finite abelian $p$-group and $G$ be a subgroup of $\mathrm{Aut}(A)$. Suppose that the following hold:
\begin{enumerate}
	\item[\textnormal{(1)}] The Sylow $p$-subgroups of $G$ have order $p$ and are not normal in $G$.
	\item[\textnormal{(2)}] For each $x \in G$ of order $p$, the group $[x,A]$ has order $p$. 
\end{enumerate}
Set $H := O^{p'}(G)$, $A_1 := C_A(H)$ and $A_2 := [H,A]$. Then $G$ normalizes $A_1$ and $A_2$, $A = A_1 \times A_2$, and $A_2 \cong C_p \times C_p$. 
\end{lemma}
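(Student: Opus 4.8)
The plan is to translate everything into the additive language of $H$ acting on the abelian $p$-group $A$ and to reduce the statement to a question about an $\mathbb{F}_p$-representation of $H$. First I would dispose of the easy assertion that $G$ normalizes $A_1$ and $A_2$: since $H = O^{p'}(G) \trianglelefteq G$, for any $g \in G$ conjugation sends $C_A(H)$ to $C_A(H^g) = C_A(H)$ and $[H,A]$ to $[H^g, A^g] = [H,A]$, so both are $G$-invariant. For the rest, the basic consequences of hypothesis (2) are that for each element $x$ of order $p$ the endomorphism $x - 1$ has image $[x,A]$ of order $p$ and kernel $C_A(x)$ of index $p$, and that every such image, being cyclic of order $p$, lies in $\Omega_1(A)$. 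Since the Sylow $p$-subgroups of $H$ have order $p$ and $H = O^{p'}(H)$ is generated by them, $H$ is the normal closure of a single $P = \langle x \rangle$; hence $A_2 = [H,A] = \sum_{g \in H} [x^g, A]$ is a sum of subgroups of order $p$ of $\Omega_1(A)$, so $A_2$ is elementary abelian and $H$ acts trivially on $A/A_2$.

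The next step, which makes the problem tractable, is to prove that $H$ acts faithfully on $A_2$. Set $K := C_H(A_2)$. Then $K$ centralizes both $A_2$ and $A/A_2$, hence lies in the kernel of $\mathrm{Aut}(A) \to \mathrm{Aut}(A_2) \times \mathrm{Aut}(A/A_2)$, which is an abelian $p$-group isomorphic to $\mathrm{Hom}(A/A_2, A_2)$; so $K$ is a normal $p$-subgroup of $H$ of order at most $p$. Because $A_2$ is $G$-invariant, $K$ is even normal in $G$, so if $|K| = p$ it would be a normal Sylow $p$-subgroup of $G$, contradicting (1). Thus $K = 1$, and I may regard $H \le GL(A_2) = GL_d(p)$ with $A_2 \cong \mathbb{F}_p^d$. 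A short computation then gives $[x,A] = [x,A_2]$ for each transvection $x$ (both equal its center), whence $A_2 = \sum_g [x^g, A_2] = [H, A_2]$; that is, $A_2$ has no nonzero trivial $H$-quotient. Since $K = 1$ and $x \neq 1$, each order-$p$ element acts on $A_2$ as a genuine transvection, and because $P$ is nonnormal not all conjugates $x^g$ can commute (otherwise $H$ would be abelian and, being equal to $O^{p'}(H)$, a $p$-group of order $p$, again contradicting (1)). Choosing two non-commuting transvections $t_1, t_2$, their centers and axes are distinct, the plane $W := [t_1,A_2] + [t_2,A_2]$ is invariant under $\langle t_1, t_2\rangle$, and two non-commuting transvections generate $SL_2(p)$ acting naturally on $W$.

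The hard part will be to show that this copy of $SL_2(p)$ exhausts the action, i.e. that $d = 2$ and $A_2 \cong C_p \times C_p$. I would argue representation-theoretically: every element of $H$ of order $p$ is conjugate into $P$, hence acts on $A_2$ with $\mathrm{rank}(x-1) \le 1$, so on every composition factor of $A_2$ a nontrivial unipotent acts with rank at most one. The only nontrivial irreducible $\mathbb{F}_p$-module for $SL_2(p)$ on which the unipotents act with rank $\le 1$ is the $2$-dimensional natural module (the symmetric powers $\mathrm{Sym}^k$ have $\mathrm{rank}(x-1) = k$); together with the facts that $A_2 = [H,A_2]$ forbids trivial quotients and that faithfulness and $O^{p'}(H) = H$ rule out the degenerate and non-semisimple configurations, this should force $A_2$ to be exactly $W$, so $d = 2$ and $SL_2(p) \le H \le GL_2(p)$. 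This is the step where the full strength of all three hypotheses (Sylow of order exactly $p$, nonnormality, and the transvection condition) is used, and it is where I expect the genuine work — essentially a version of the classification of transvection groups whose Sylow $p$-subgroup has order $p$ — to lie.

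Finally, once $A_2 \cong C_p \times C_p$ is realized as the natural module, the direct decomposition comes cheaply. The scalar $-I$ lies in $SL_2(p) \le H$ and, being central in $GL(A_2)$, lies in $Z(H)$; let $z \in H$ be the corresponding element. Then $z$ has order $2$, acts as $-1$ on $A_2$ and trivially on $A/A_2$. Since $\langle z \rangle$ has order coprime to $p$, coprime action yields $A = C_A(z) \oplus [z,A]$; here $[z,A] = A_2$, because $z - 1 = -2$ is invertible on $A_2$ while $z$ is trivial on $A/A_2$, and because $z$ is central in $H$ the summand $C_A(z)$ is $H$-invariant with $H$ acting trivially on it (it maps isomorphically onto the trivial module $A/A_2$). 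Hence $C_A(z) \le C_A(H) = A_1$, whereas $A_1 \cap A_2 = C_{A_2}(H) = 0$; comparing orders then forces $A_1 = C_A(z)$ and $A = A_1 \oplus A_2$, which finishes the argument.
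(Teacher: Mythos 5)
First, note that the paper itself offers no proof to compare against: Lemma \ref{lemma_oliver} is quoted verbatim from \cite[Lemma 1.11]{Oliver} and used as a black box, so your reconstruction has to stand on its own. It does not, because the step you yourself flag as ``the hard part'' is a genuine gap, not a proof. Everything up to it is correct and cleanly argued: the $G$-invariance of $A_1$ and $A_2$, that $A_2$ is elementary abelian with $H$ trivial on $A/A_2$, the faithfulness of $H$ on $A_2$ (via the embedding of the group of automorphisms trivial on $A_2$ and $A/A_2$ into $\mathrm{Hom}(A/A_2,A_2)$, combined with hypothesis (1)), and that every order-$p$ element acts as a transvection. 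But for the crux --- that $A_2=W$, i.e. $d=2$ --- you offer only ``this should force'' plus an appeal to the representation theory of $SL_2(p)$, and that appeal is off target: $H$ is not known to be $SL_2(p)$ (that is essentially what is to be proved), and restricting $A_2$ to your constructed copy $S=\langle t_1,t_2\rangle$ only yields $[S,A_2]=W$ with $S$ acting trivially on $A_2/W$. What must be excluded is a conjugate $y=x^g$ whose center $[y,A]$ lies outside $W$, and nothing in your sketch addresses how such a $y$ interacts with $S$; that exclusion \emph{is} the content of the lemma. Appealing instead to ``a classification of transvection groups whose Sylow $p$-subgroup has order $p$'' (a McLaughlin-type theorem) either is circular or imports a result at least as deep as the lemma, on top of an unproven reduction to the irreducible case.

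Second, one of your intermediate claims is false as stated: two non-commuting transvections with distinct centers and distinct axes need not generate $SL_2(p)$ on the span of their centers. In $GL_3(p)$, take $t_1=I+E_{12}$ and $t_2=I+E_{23}$, where $E_{ij}$ has a single nonzero entry $1$ in position $(i,j)$: these do not commute, their centers $\langle e_1\rangle$, $\langle e_2\rangle$ and axes $\langle e_1,e_3\rangle$, $\langle e_1,e_2\rangle$ are all distinct, yet they generate a Heisenberg group of order $p^3$, and $t_2$ restricts to the identity on $\langle e_1,e_2\rangle$. What saves the claim in your situation is hypothesis (1), not distinctness of centers and axes: writing $f_i=t_i-1$, if the center of one transvection lies in the axis of the other, say $f_2f_1=0$, then every element of $\langle t_1,t_2\rangle$ has the form $1+af_1+bf_2+cf_1f_2$ with $a,b,c\in\mathbb{F}_p$, so $\langle t_1,t_2\rangle$ is a nonabelian $p$-group, of order greater than $p$, which is impossible inside $H$; hence each center avoids the other's axis, and only in that configuration is your $SL_2(p)$ statement valid. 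This case analysis needs to be made explicit. Finally, your concluding step uses $z=-I$ and is therefore vacuous for $p=2$, although the lemma is stated for every prime; for odd $p$ (the only case the paper actually uses, in the proof of Theorem \ref{third_main_result}) that final coprime-action argument is fine once the hard part is granted.
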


Let $p$ be a prime number and $S$ be a finite $p$-group. Following \cite[§65]{BerkovichJanko}, we say that $S$ is an \textit{$\mathcal{A}_2$-group} if $S$ contains a nonabelian maximal subgroup, while any subgroup of $S$ with index $p^2$ is abelian. 

\begin{lemma}
\label{A2-groups} 
Let $p$ be a prime number and $S$ be a finite $p$-group. Suppose that $S$ is a nonmetacyclic $\mathcal{A}_2$-group and that $|S| > p^4$. Then the following hold: 
\begin{enumerate}
	\item[\textnormal{(1)}] (\cite[Proposition 71.4]{BerkovichJanko}) If $S$ possesses precisely one abelian maximal subgroup and if $S' \le Z(S)$, then $Z(S) = \Phi(S)$. 
	\item[\textnormal{(2)}] (\cite[Proposition 71.5]{BerkovichJanko}) If any maximal subgroup of $S$ is nonabelian and if $p$ is odd, then $|S| = p^5$. 
\end{enumerate}
\end{lemma}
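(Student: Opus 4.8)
The plan is simply to quote the two results from \cite{BerkovichJanko} that are already named in the statement: part (1) is \cite[Proposition 71.4]{BerkovichJanko} and part (2) is \cite[Proposition 71.5]{BerkovichJanko}. Under the standing hypothesis of the lemma that $S$ is a nonmetacyclic $\mathcal{A}_2$-group with $|S| > p^4$, the only remaining task is to match the extra hypotheses of each part with those imposed in the corresponding proposition and then to read off the conclusion.

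Concretely, for part (1) I would check that the two additional assumptions --- that $S$ has precisely one abelian maximal subgroup and that $S' \le Z(S)$ --- are exactly the conditions under which Berkovich and Janko establish $Z(S) = \Phi(S)$, and for part (2) that the assumptions ``every maximal subgroup of $S$ is nonabelian'' together with ``$p$ is odd'' are exactly those forcing $|S| = p^5$. Since the notion of an $\mathcal{A}_2$-group used here is precisely the one recalled just before the lemma (taken from \cite{BerkovichJanko}), the hypotheses line up verbatim and each part then follows immediately.

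I do not expect any genuine obstacle. The substantive content is carried entirely by the classification of $\mathcal{A}_2$-groups developed in \cite{BerkovichJanko}; a self-contained argument for either part would amount to reconstructing that classification, which is both unnecessary and well beyond the scope of what is needed here. The only point demanding care is the bookkeeping of conventions --- in particular the order bound $|S| > p^4$ and the precise definition of an $\mathcal{A}_2$-group --- so as to be certain that the cited propositions apply in the stated form without any reformulation.
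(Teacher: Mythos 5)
Your proposal matches the paper exactly: the paper gives no separate proof for this lemma, treating the inline citations to \cite[Propositions 71.4 and 71.5]{BerkovichJanko} as the proof, with the $\mathcal{A}_2$-group definition recalled beforehand precisely so that the hypotheses line up verbatim. Your plan of verifying the hypotheses and reading off the conclusions is exactly what the paper does, so there is nothing to add.
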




\section{Proofs of the main results} 
\label{proofs} 
\begin{proof}[Proof of Theorem \ref{first_main_result}]
Suppose that the theorem is false, and let $G$ be a minimal counterexample. We will derive a contradiction in several steps. Set $\mathcal{F} := \mathcal{F}_S(G)$. 

\medskip
(1) \textit{Let $H < G$ with $S \cap H \in \mathrm{Syl}_p(H)$ and $|S \cap H| \ge p|D|$. Then $\mathcal{F}_{S \cap H}(H)$ is supersolvable.}

By hypothesis, any subgroup of $S \cap H$ with order $|D|$ or $p|D|$ is abelian and weakly pronormal in $G$. Applying \cite[Lemma 2.2 (2)]{Asaad}, we conclude that any subgroup of $S \cap H$ with order $|D|$ or $p|D|$ is abelian and weakly pronormal in $H$. The minimality of $G$ implies that $\mathcal{F}_{S \cap H}(H)$ is supersolvable. 

\medskip
(2) \textit{Let $Q \in \mathcal{E}_{\mathcal{F}}^{*}$. Then $|Q| \ge p|D|$, and if $Q$ is not normal in $G$, then $N_{\mathcal{F}}(Q)$ is supersolvable.}

Suppose that $|Q| < p|D|$. Then there is a subgroup $R$ of $S$ such that $|R| = p |D|$ and $Q < R$. By hypothesis, $R$ is abelian, and so we have $R \le C_S(Q)$. As a member of $\mathcal{E}_{\mathcal F}^{*}$, the subgroup $Q$ is $\mathcal{F}$-centric. It follows that $R \le C_S(Q) \le Q$, a contradiction. Thus $|Q| \ge p|D|$. 

Suppose now that $Q$ is not normal in $G$. Hence, $N_G(Q)$ is a proper subgroup of $G$. As $Q$ is fully $\mathcal{F}$-normalized, we have $S \cap N_G(Q) = N_S(Q) \in \mathrm{Syl}_p(N_G(Q))$. Also, $|N_S(Q)| \ge |Q| \ge p|D|$. Now, (1) implies that $N_{\mathcal{F}}(Q) = \mathcal{F}_{N_S(Q)}(N_G(Q))$ is supersolvable. 

\medskip
(3) $|O_p(G)| \ge p|D|$. 

Assume that there is no $Q \in \mathcal{E}_{\mathcal F}^{*}$ with $Q \trianglelefteq G$. Then, for each $Q \in \mathcal{E}_{\mathcal{F}}^{*}$, the fusion system $N_{\mathcal{F}}(Q)$ is supersolvable by (2), and Lemma \ref{criterion_FS_one} implies that $\mathcal{F}$ is supersolvable. This contradiction shows that there is some $Q \in \mathcal{E}_{\mathcal F}^{*}$ with $Q \trianglelefteq G$. Applying (2), we conclude that $|O_p(G)| \ge |Q| \ge p|D|$. 

\medskip 
(4) $O_p(G) \le Z_{\mathcal{U}}(G)$. 

Let $U$ be a subgroup of $O_p(G)$ with order $|D|$ or $p|D|$. By hypothesis, $U$ is weakly pronormal in $G$. Hence, there is a subgroup $K$ of $G$ such that $G = UK$ and such that $U \cap K$ is pronormal in $G$. By \cite[Theorem 4.3]{LiuYu}, $U \cap K$ is weakly $\mathcal{F}$-closed. For each $g \in G$, we have $(U \cap K)^g \le O_p(G) \le S$, and so it follows that $(U \cap K)^g = U \cap K$. Consequently, $U \cap K$ is normal in $G$, and therefore, $U$ is $c$-supplemented in $G$ in the sense of \cite{Asaad_c_supplemented, BBWG}. 

Since $U$ was arbitrarily chosen, any subgroup of $O_p(G)$ with order $|D|$ or $p|D|$ is $c$-supplemented in $G$. Applying \cite[Theorem 3.2 and Corollary 3.4]{Asaad_c_supplemented}, we conclude that $O_p(G) \le Z_{\mathcal U}(G)$. 

\medskip
(5) \textit{The final contradiction.}

If $H$ is a proper subgroup of $G$ with $O_p(G) < S \cap H$ and $S \cap H \in \mathrm{Syl}_p(H)$, then $\mathcal{F}_{S \cap H}(H)$ is supersolvable by (1) and (3). Also, $O_p(G) \le Z_{\mathcal U}(G)$ by (4). So $\mathcal{F}$ is supersolvable by Lemma \ref{criterion_FS_two}. This contradiction completes the proof. 
\end{proof}

\begin{proof}[Proof of Theorem \ref{second_main_result}]
Suppose that the theorem is false, and let $\mathcal{F}$ be a counterexample such that $|\mathcal F|$, the number of morphisms in $\mathcal F$, is minimal. We will derive a contradiction in several steps. In some parts of the proof, we argue similarly as in the proof of Theorem \ref{first_main_result}.

\medskip
(1) \textit{Let $R$ be a subgroup of $S$ with $|R| \ge 2|D|$ and $\mathcal{F}_0$ be a proper saturated subsystem of $\mathcal{F}$ on $R$. Then $\mathcal{F}_0$ is supersolvable.}

By hypothesis, any subgroup of $R$ with order $|D|$ is abelian and weakly $\mathcal{F}$-closed, and if $R$ is nonabelian, then we moreover have that any subgroup of $R$ with order $2|D|$ is abelian and weakly $\mathcal{F}$-closed. Clearly, any weakly $\mathcal{F}$-closed subgroup of $R$ is weakly $\mathcal{F}_0$-closed. Consequently, the fusion system $\mathcal{F}_0$ satisfies the hypotheses of the theorem, and the minimality of $\mathcal{F}$ implies that $\mathcal{F}_0$ is supersolvable. 

\medskip
(2) \textit{Let $Q \in \mathcal{E}_{\mathcal{F}}^{*}$. Then $|Q| \ge 2|D|$, and if $Q$ is not normal in $\mathcal{F}$, then $N_{\mathcal{F}}(Q)$ is supersolvable.} 

Suppose that $|Q| < 2|D|$. Then there is a subgroup $R$ of $S$ such that $|R| = 2|D|$ and $Q < R$. By hypothesis, $R$ is abelian, and so we have $R \le C_S(Q)$. As a member of $\mathcal{E}_{\mathcal{F}}^{*}$, the subgroup $Q$ is $\mathcal{F}$-centric. It follows that $R \le C_S(Q) \le Q$, a contradiction. Thus $|Q| \ge 2|D|$. 

Suppose now that $Q$ is not normal in $\mathcal{F}$. Then $N_{\mathcal{F}}(Q)$ is a proper saturated subsystem of $\mathcal{F}$ on $N_S(Q)$. Since $|N_S(Q)| \ge |Q| \ge 2|D|$, it follows from (1) that $N_{\mathcal{F}}(Q)$ is supersolvable. 

\medskip
(3) $|O_2(\mathcal{F})| \ge 2|D|$.

Assume that there is no $Q \in \mathcal{E}_{\mathcal{F}}^{*}$ with $Q \trianglelefteq \mathcal{F}$. Then, for each $Q \in \mathcal{E}_{\mathcal{F}}^{*}$, the fusion system $N_{\mathcal{F}}(Q)$ is supersolvable by (2), and Lemma \ref{criterion_FS_one} implies that $\mathcal{F}$ is supersolvable. This contradiction shows that there is some $Q \in \mathcal{E}_{\mathcal{F}}^{*}$ with $Q \trianglelefteq \mathcal{F}$. Applying (2), we conclude that $|O_2(\mathcal{F})| \ge |Q| \ge 2|D|$. 

\medskip
(4) \textit{There is a series $1 = S_0 \le S_1 \le \dots \le S_n = O_2(\mathcal{F})$ of subgroups of $O_2(\mathcal{F})$ such that $S_i$ is strongly $\mathcal{F}$-closed for all $0 \le i \le n$ and such that $S_{i+1}/S_i$ is cyclic for all $0 \le i < n$.}

Let $N := O_2(\mathcal{F})$, $A := \mathrm{Aut}_{\mathcal{F}}(N)$ and $G := N \rtimes A$ be the outer semidirect product of $N$ and $A$ with respect to the natural action of $A$ on $N$. Identifying each element of $N$ with its canonical image in $G$, we may regard $N$ as a normal subgroup of $G$. Likewise, identifying each element of $A$ with its canonical image in $G$, we may regard $A$ as a subgroup of $G$. Then $G = NA$, $N \cap A = 1$, and for all $x \in N$ and $\alpha \in A$, we have $\alpha^{-1}x\alpha = \alpha(x)$.

Let $P \le N$ such that $P$ is weakly $\mathcal{F}$-closed. We show that $P \trianglelefteq G$. Since $P$ is weakly $\mathcal{F}$-closed, we have $P \trianglelefteq S$ and thus $N \le N_G(P)$. For all $\alpha \in A$, we have $\alpha^{-1}P\alpha = \alpha(P) = P$, where the latter equality holds since $P$ is weakly $\mathcal{F}$-closed. Thus $A \le N_G(P)$, and it follows that $G = NA \le N_G(P)$. So we have $P \trianglelefteq G$, as wanted. 


By the preceding paragraph and by hypothesis, any subgroup of $N$ with order $|D|$ is normal in $G$, and if $N$ is nonabelian, then we moreover have that any subgroup of $N$ with order $2|D|$ is normal in $G$. Lemma \ref{lemma_skiba} implies that $N \le Z_{\mathcal{U}}(G)$. Hence, there is a series $1 = S_0 \le S_1 \le \dots \le S_n = N$ of subgroups of $N$ such that $S_i \trianglelefteq G$ for all $0 \le i \le n$ and such that $S_{i+1}/S_i$ is cyclic for all $0 \le i < n$. To complete the proof of (4), we show that $S_i$ is strongly $\mathcal{F}$-closed for all $0 \le i \le n$. Let $P$ be a subgroup of $S_i$ for some $0 \le i \le n$, and let $P_1$ be a subgroup of $S$ such that there is an $\mathcal{F}$-isomorphism $\varphi: P \rightarrow P_1$. Clearly, $N$ is strongly $\mathcal{F}$-closed, and therefore, we have $P_1 \le N$. Since $N$ is normal in $\mathcal{F}$, the isomorphism $\varphi$ extends to an automorphism $\alpha \in \mathrm{Aut}_{\mathcal{F}}(N) = A$. We have $P_1 = \varphi(P) = \alpha(P) \le \alpha(S_i) = \alpha^{-1}S_i\alpha = S_i$, where the last equality holds since $S_i$ is normal in $G$. We have shown that each $\mathcal{F}$-conjugate of $P$ lies in $S_i$. Therefore, $S_i$ is strongly $\mathcal{F}$-closed, as required. 

\medskip
(5) \textit{The final contradiction.} 

Applying Lemma \ref{criterion_FS_three}, we deduce from (1), (3) and (4) that $\mathcal{F}$ is supersolvable. This contradiction completes the proof.  
\end{proof} 	

\begin{remark}
Another proof of Theorem \ref{second_main_result} works as follows: One assumes that Theorem \ref{second_main_result} is false and considers a counterexample $\mathcal{F}$ such that $|\mathcal{F}|$ is minimal. Arguing as in the above proof of Theorem \ref{second_main_result}, one shows that there is some $Q \in \mathcal{E}_{\mathcal F}^{*}$ with $Q \trianglelefteq \mathcal{F}$. Then $\mathcal{F}$ is constrained, and the model theorem \cite[Part III, Theorem 5.10]{AKO} implies that $\mathcal{F}$ is the $2$-fusion system of a finite group $G$. Theorem \ref{theorem_asaad_two} shows that $G$ is $2$-nilpotent, and so it follows that $\mathcal{F}$ is nilpotent, a contradiction. This proof of Theorem \ref{second_main_result} has the advantage of being shorter than the above proof of Theorem \ref{second_main_result}, but it has the disadvantage of being less elementary since it depends on the model theorem, which is a quite deep result. 
\end{remark}

\begin{proof}[Proof of Theorem \ref{third_main_result}]
Suppose that the theorem is false, and let $\mathcal{F}$ be a counterexample such that $|\mathcal{F}|$ is minimal. We will derive a contradiction in several steps. 

\medskip
(1) \textit{$S$ is not normal in $\mathcal{F}$.}

Assume that $S \trianglelefteq \mathcal{F}$. We claim that there is a $p$-closed finite group $G$ with $S \in \mathrm{Syl}_p(G)$ and $\mathcal{F} = \mathcal{F}_S(G)$. Indeed, the normality of $S$ in $\mathcal{F}$ implies that $\mathcal{F}$ is constrained, and so the existence of such a group $G$ follows from the model theorem \cite[Part III, Theorem 5.10]{AKO}. Alternatively, without using the model theorem, one can argue as follows: Let $A := \mathrm{Aut}_{\mathcal{F}}(S)$. Since $\mathcal{F}$ is saturated, $\mathrm{Inn}(S)$ is a Sylow $p$-subgroup of $A$, whence $|\mathrm{Inn}(S)|$ and $|A : \mathrm{Inn}(S)|$ are coprime. Since $\mathrm{Inn}(S) \trianglelefteq A$, the Schur-Zassenhaus theorem \cite[Theorem 3.8]{Isaacs} implies that $\mathrm{Inn}(S)$ has a complement $H$ in $A$. Let $G = S \rtimes H$ be the outer semidirect product of $S$ and $H$ with respect to the natural action of $H$ on $S$. Identifying each element of $S$ with its canonical image in $G$, we may regard $S$ as a normal subgroup of $G$. Then we have $S \in \mathrm{Syl}_p(G)$ since $|H|$ is not divisible by $p$, and it is not hard to show that $\mathcal{F} = \mathcal{F}_S(G)$. 

Since $S$ is normal in $G$ and since any maximal subgroup of $S$ is weakly $\mathcal{F}$-closed, we have that any maximal subgroup of $S$ is normal in $G$. Lemma \ref{lemma_skiba} implies that $S \le Z_{\mathcal{U}}(G)$. Hence, there is a series $1 = S_0 \le S_1 \le \dots \le S_m = S$ of subgroups of $S$ such that $S_i \trianglelefteq G$ for all $0 \le i \le m$ and such that $S_{i+1}/S_i$ is cyclic for all $0 \le i < m$. In particular, $S_i$ is strongly $\mathcal{F}$-closed for all $0 \le i \le m$, and so it follows that $\mathcal{F}$ is supersolvable. This contradiction shows that $S$ is not normal in $\mathcal{F}$. 

\medskip
(2) \textit{$S$ possesses more than one abelian maximal subgroup, and any abelian maximal subgroup of $S$ contains $Z(S)$. Moreover, we have $|S'| = p$ and $|S : Z(S)| = p^2$.}

Assume that $S$ is abelian. Then $S \trianglelefteq \mathcal{F}$ by \cite[Part I, Corollary 4.7 (a)]{AKO}, which contradicts (1). So $S$ must be nonabelian. In particular, $S$ is not cyclic, and so $S$ has more than one abelian maximal subgroup by hypothesis. If $R$ is an abelian maximal subgroup of $S$, then $Z(S) \le R$ because otherwise $S = RZ(S)$ would be abelian. We see from \cite[Lemma 1.9]{Oliver} that $|S'| = p$ and $|S : Z(S)| = p^2$.

\medskip
(3) \textit{$O_p(\mathcal{F})$ is an abelian maximal subgroup of $S$, we have $\mathcal{E}_{\mathcal{F}}^{*} = \lbrace O_p(\mathcal{F}), S \rbrace$, and the subgroups $O_p(\mathcal{F})$ and $S$ are precisely the $\mathcal{F}$-centric, $\mathcal{F}$-radical subgroups of $S$.} 

First, we argue that any member of $\mathcal{E}_{\mathcal{F}}^{*} \setminus \lbrace S \rbrace$ is an abelian maximal subgroup of $S$. Let $Q \in \mathcal{E}_{\mathcal{F}}^{*} \setminus \lbrace S \rbrace$. Then, since $Q$ is $\mathcal{F}$-centric, we have $Z(S) < Q < S$. As $|S : Z(S)| = p^2$ by (2), it follows that $Q$ is a maximal subgroup of $S$. Also, $Q$ is abelian since $|Q : Z(S)| = p$. 

Let $Q \in \mathcal{E}_{\mathcal{F}}^{*}$ such that $Q$ is not normal in $\mathcal{F}$. Then $N_{\mathcal{F}}(Q)$ is a proper saturated subsystem of $\mathcal{F}$ on $S$, and it is easy to see that $N_{\mathcal{F}}(Q)$ satisfies the hypotheses of the theorem. So $N_{\mathcal{F}}(Q)$ is supersolvable by the minimality of $\mathcal{F}$. If no member of $\mathcal{E}_{\mathcal{F}}^{*}$ is normal in $\mathcal{F}$, then $N_{\mathcal{F}}(Q)$ is supersolvable for each $Q \in \mathcal{E}_{\mathcal{F}}^{*}$, whence $\mathcal{F}$ is supersolvable by Lemma \ref{criterion_FS_one}. Since this is not the case, there must exist some $Q \in \mathcal{E}_{\mathcal{F}}^{*}$ with $Q \trianglelefteq \mathcal{F}$. We have $Q \ne S$ by (1), and so the preceding paragraph implies that $Q$ is an abelian maximal subgroup of $S$. Since $Q \le O_p(\mathcal{F})$ and $O_p(\mathcal{F}) \ne S$, we have $O_p(\mathcal{F}) = Q$. Hence, we have shown that $O_p(\mathcal{F})$ is an abelian maximal subgroup of $S$. 

Clearly, $S$ is $\mathcal{F}$-centric and $\mathcal{F}$-radical. Since $O_p(\mathcal{F})$ is $\mathcal{F}$-essential, we also have that $O_p(\mathcal{F})$ is $\mathcal{F}$-centric and $\mathcal{F}$-radical (see \cite[Part I, Proposition 3.3 (a)]{AKO}). Conversely, if $R$ is an $\mathcal{F}$-centric, $\mathcal{F}$-radical subgroup of $S$, then $O_p(\mathcal{F}) \le R$ by \cite[Part I, Proposition 4.5]{AKO}, and the maximality of $O_p(\mathcal{F})$ in $S$ implies that $R = O_p(\mathcal{F})$ or $R = S$. Consequently, the subgroups $O_p(\mathcal{F})$ and $S$ are precisely the $\mathcal{F}$-centric, $\mathcal{F}$-radical subgroups of $S$. 

Since any member of $\mathcal{E}_{\mathcal{F}}^{*}$ is $\mathcal{F}$-centric and $\mathcal{F}$-radical, it follows that $\mathcal{E}_{\mathcal{F}}^{*} \subseteq \lbrace O_p(\mathcal{F}), S \rbrace$. The other inclusion also holds, and so we have $\mathcal{E}_{\mathcal{F}}^{*} = \lbrace O_p(\mathcal{F}), S \rbrace$. 

\medskip(4) \textit{There is no nontrivial subgroup of $Z(S)$ which is normal in $\mathcal{F}$.}

Assume that there is a subgroup $1 \ne Z \le Z(S)$ with $Z \trianglelefteq \mathcal{F}$. From Lemma \ref{lemma_strongly_closed_1}, we see that any maximal subgroup of $S/Z$ is weakly $\mathcal{F}/Z$-closed. By (2), $S$ possesses more than one abelian maximal subgroup, and each of them contains $Z$. Hence, $S/Z$ has more than one abelian maximal subgroup. Consequently, the fusion system $\mathcal{F}/Z$ satisfies the hypotheses of the theorem, and so $\mathcal{F}/Z$ is supersolvable by the minimality of $\mathcal{F}$. 

Let $R_1$ and $R_2$ be two distinct abelian maximal subgroups of $S$. Since $R_1/Z$ and $R_2/Z$ are weakly $\mathcal{F}/Z$-closed and since $\mathcal{F}/Z$ is supersolvable, we see from Lemma \ref{lemma_strongly_closed_3} that $R_1/Z$ and $R_2/Z$ are in fact strongly $\mathcal{F}/Z$-closed. Lemma \ref{lemma_strongly_closed_2} implies that $R_1$ and $R_2$ are strongly $\mathcal{F}$-closed. Since $R_1$ and $R_2$ are abelian, it follows from \cite[Part I, Corollary 4.7 (a)]{AKO} that $R_1$ and $R_2$ are normal in $\mathcal{F}$. Hence, $S = R_1R_2$ is normal in $\mathcal{F}$, which contradicts (1). Therefore, there is no subgroup $1 \ne Z \le Z(S)$ with $Z \trianglelefteq \mathcal{F}$. 

\medskip
(5) \textit{The Sylow $p$-subgroups of $\mathrm{Aut}_{\mathcal{F}}(O_p(\mathcal{F}))$ have order $p$ and are not normal in $\mathrm{Aut}_{\mathcal{F}}(O_p(\mathcal{F}))$.}

As $\mathcal{F}$ is saturated and $O_p(\mathcal{F})$ is fully $\mathcal{F}$-normalized, $\mathrm{Aut}_S(O_p(\mathcal{F}))$ is a Sylow $p$-subgroup of $\mathrm{Aut}_{\mathcal{F}}(O_p(\mathcal{F}))$. Since $O_p(\mathcal{F})$ is $\mathcal{F}$-centric, abelian and maximal in $S$ by (3), we have $\mathrm{Aut}_S(O_p(\mathcal{F})) \cong S/O_p(\mathcal{F}) \cong C_p$. Since $O_p(\mathcal{F})$ is $\mathcal{F}$-radical by (3), we also have $1 = \mathrm{Inn}(O_p(\mathcal{F})) = O_p(\mathrm{Aut}_{\mathcal{F}}(O_p(\mathcal{F})))$. Hence, $\mathrm{Aut}_S(O_p(\mathcal{F}))$ is not normal in $\mathrm{Aut}_{\mathcal{F}}(O_p(\mathcal{F}))$.

\medskip
(6) \textit{Assume that $\alpha \in \mathrm{Aut}_{\mathcal{F}}(O_p(\mathcal{F}))$ has order $p$. Then $[\alpha,O_p(\mathcal{F})]$ has order $p$.}

Since $\mathrm{Aut}_S(O_p(\mathcal{F}))$ is a Sylow $p$-subgroup of $\mathrm{Aut}_{\mathcal{F}}(O_p(\mathcal{F}))$, there is some $\beta \in \mathrm{Aut}_{\mathcal{F}}(O_p(\mathcal{F}))$ such that $\gamma := \beta^{-1}\alpha\beta \in \mathrm{Aut}_S(O_p(\mathcal{F}))$. A direct calculation shows that $[\gamma,O_p(\mathcal{F})] = \beta([\alpha,O_p(\mathcal{F})])$, whence $|[\gamma,O_p(\mathcal{F})]| = |[\alpha,O_p(\mathcal{F})]|$. Therefore, we may assume without loss of generality that $\alpha \in \mathrm{Aut}_S(O_p(\mathcal{F}))$. Then there is some $s \in S$ with $\alpha(x) = x^s$ for all $x \in O_p(\mathcal{F})$, and we have $[\alpha,O_p(\mathcal{F})] = [s,O_p(\mathcal{F})] \le S'$. Since $S'$ has order $p$ by (2) and since $\alpha$ acts nontrivially on $O_p(\mathcal{F})$, it follows that $[\alpha,O_p(\mathcal{F})]$ has order $p$, as wanted. 

\medskip
(7) \textit{$S$ is extraspecial of order $p^3$ and exponent $p$.}

Set $A := O_p(\mathcal{F})$, $G := \mathrm{Aut}_{\mathcal{F}}(A)$, $H := O^{p'}(G)$, $A_1 := C_A(H)$ and $A_2 := [H,A]$. By (3), (5), (6) and Lemma \ref{lemma_oliver}, $G$ normalizes $A_1$ and $A_2$, $A = A_1 \times A_2$ and $A_2 \cong C_p \times C_p$.  

Since $\mathrm{Aut}_S(A)$ is a Sylow $p$-subgroup of $G$, we have $\mathrm{Aut}_S(A) \le H$. Consequently, $\mathrm{Aut}_S(A)$ centralizes $A_1$. In other words, we have $A_1 \le Z(S)$.

We show now that $A_1 \trianglelefteq \mathcal{F}$. Since $G$ normalizes $A_1$ and since $\mathcal{E}_{\mathcal{F}}^{*} = \lbrace A, S \rbrace$ by (3), it suffices to show that $A_1$ is $\mathrm{Aut}_{\mathcal{F}}(S)$-invariant (see \cite[Part I, Proposition 4.5]{AKO}). But this follows from the fact that $G$ normalizes $A_1$ because any $\mathcal{F}$-automorphism of $S$ restricts to an $\mathcal{F}$-automorphism of $A$.

We have shown that $A_1$ is a subgroup of $Z(S)$ which is normal in $\mathcal{F}$. Therefore, by (4), $A_1$ must be trivial. It follows that $A = A_2 \cong C_p \times C_p$. Since $A$ is maximal in $S$ by (3), it further follows that $|S| = p^3$. By (2), $S$ is nonabelian, and since any nonabelian $p$-group of order $p^3$ is extraspecial, it follows that $S$ is extraspecial. 

It remains to show that $S$ has exponent $p$. Assume that this is not the case. Then $S$ has exponent $p^2$, and hence, $S$ has a cyclic maximal subgroup. So $S$ is metacyclic, and \cite[Theorem C]{BB} implies that $\mathcal{F}$ is supersolvable. This contradiction shows that $S$ has exponent $p$.   

\medskip
(8) \textit{The final contradiction.} 

By (7), $S$ is extraspecial of order $p^3$ and exponent $p$. As a consequence of (3), $S$ has precisely one elementary abelian $\mathcal{F}$-centric and $\mathcal{F}$-radical subgroup. Applying \cite[Theorem 1.1]{RuizViruel}, or only \cite[Lemma 4.7]{RuizViruel}, we conclude that $\mathcal{F}$ is isomorphic to one of the fusion systems considered in Lemma \ref{lemma_extraspecial}. Alternatively, this can be seen from \cite[Lemma 9.2]{Craven}. Therefore, by Lemma \ref{lemma_extraspecial}, there is a maximal subgroup of $S$ which is not weakly $\mathcal{F}$-closed. On the other hand, we have by hypothesis that every maximal subgroup of $S$ is weakly $\mathcal{F}$-closed. With this contradiction, the proof is complete. 
\end{proof}

For the proof of Theorem \ref{fourth_main_result}, we need the following lemma. It verifies Theorem \ref{fourth_main_result} for the case that the fusion system $\mathcal{F}$ in the statement of the theorem is realized by a finite group. 

\begin{lemma}
\label{lemma_Thm_D} 
Let $G$ be a finite group, $p$ be an odd prime number, $S$ be a Sylow $p$-subgroup of $G$ and $\mathcal{F} := \mathcal{F}_S(G)$. Suppose that there is a subgroup $D$ of $S$ with $1 < D < S$ such that any subgroup of $S$ with order $|D|$ is abelian and weakly $\mathcal{F}$-closed. Then $\mathcal{F}$ is supersolvable. 
\end{lemma}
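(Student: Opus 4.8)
The plan is to argue by induction on $|G|$, assuming the lemma is false and taking $G$ to be a minimal counterexample. Set $\mathcal{F} := \mathcal{F}_S(G)$. The overall strategy mirrors the proof of Theorem \ref{first_main_result}: I would reduce to understanding the normalizer systems of members of $\mathcal{E}_{\mathcal{F}}^{*}$, show that some member of $\mathcal{E}_{\mathcal{F}}^{*}$ is normal in $G$ so that $O_p(G)$ is large, establish that $O_p(G) \le Z_{\mathcal{U}}(G)$, and then invoke Lemma \ref{criterion_FS_two} to reach a contradiction. The crucial difference from Theorem \ref{first_main_result} is that here we only control subgroups of the single order $|D|$ (not also $p|D|$), and the subgroups are only weakly $\mathcal{F}$-closed (not the stronger weakly pronormal), so the inductive step and the hypercentre step both require more delicate arguments.

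The first steps proceed as follows. I would first check that the hypotheses pass to relevant proper subgroups: if $H < G$ with $S \cap H \in \mathrm{Syl}_p(H)$ and $|S \cap H| \ge |D|$, then any subgroup of $S \cap H$ of order $|D|$ is abelian and (being weakly $\mathcal{F}$-closed, hence weakly $\mathcal{F}_{S\cap H}(H)$-closed) weakly $\mathcal{F}_{S \cap H}(H)$-closed, so by minimality $\mathcal{F}_{S \cap H}(H)$ is supersolvable. Next, for $Q \in \mathcal{E}_{\mathcal{F}}^{*}$, I would show $|Q| \ge |D|$: if $|Q| < |D|$, pick $R \le S$ with $|R| = |D|$ and $Q < R$; then $R$ is abelian, so $R \le C_S(Q) \le Q$ (using that $Q$ is $\mathcal{F}$-centric), a contradiction. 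If moreover $Q$ is not normal in $G$, then $N_G(Q) < G$ and $N_S(Q) \in \mathrm{Syl}_p(N_G(Q))$ with $|N_S(Q)| \ge |Q| \ge |D|$, so the previous step gives that $N_{\mathcal{F}}(Q)$ is supersolvable. As in step (3) of Theorem \ref{first_main_result}, Lemma \ref{criterion_FS_one} then forces some $Q \in \mathcal{E}_{\mathcal{F}}^{*}$ to be normal in $G$, whence $|O_p(G)| \ge |D|$.

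The main obstacle is the hypercentre step: showing $O_p(G) \le Z_{\mathcal{U}}(G)$. In Theorem \ref{first_main_result} this used $c$-supplementation together with control of subgroups of order $p|D|$, but here we have only weakly $\mathcal{F}$-closed subgroups of order exactly $|D|$. The key observation is that if $P \le O_p(G)$ is weakly $\mathcal{F}$-closed, then for each $g \in G$ we have $P^g \le O_p(G) \le S$, and weak $\mathcal{F}$-closedness forces $P^g = P$, so $P$ is actually normal in $G$. Thus every subgroup of $O_p(G)$ of order $|D|$ is abelian and normal in $G$. Since $p$ is odd, the troublesome case $|D| = 2$ in Lemma \ref{lemma_skiba} does not arise, so Lemma \ref{lemma_skiba} applies directly (with $P := O_p(G)$) and yields $O_p(G) \le Z_{\mathcal{U}}(G)$. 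This is exactly the point where the oddness of $p$ is essential and where the argument becomes cleaner than in the general $c$-supplemented setting. (I would note that, in contrast to Theorem \ref{fourth_main_result} itself, the realizability hypothesis here lets us work entirely inside $G$ and sidesteps the deeper classification-dependent input from \cite{Kappe}.)

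Finally, I would assemble the contradiction. For any proper $H < G$ with $O_p(G) < S \cap H$ and $S \cap H \in \mathrm{Syl}_p(H)$, we have $|S \cap H| > |O_p(G)| \ge |D|$, so $\mathcal{F}_{S \cap H}(H)$ is supersolvable by the first step. Combined with $O_p(G) \le Z_{\mathcal{U}}(G)$, Lemma \ref{criterion_FS_two} gives that $\mathcal{F} = \mathcal{F}_S(G)$ is supersolvable, contradicting the choice of $G$ as a counterexample. I expect the hypercentre step to be where all the care is needed, since collapsing ``weakly $\mathcal{F}$-closed inside $O_p(G)$'' to ``normal in $G$'' is precisely what compensates for having only one order $|D|$ available rather than two.
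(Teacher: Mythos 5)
Your argument breaks down exactly at the case $|O_p(G)| = |D|$, and that case is the entire difficulty of this lemma. Lemma \ref{lemma_skiba}, applied with $P := O_p(G)$, requires a subgroup $D'$ with $1 < D' < O_p(G)$ such that all subgroups of $O_p(G)$ of order $|D'|$ are normal in $G$; your hypotheses only supply this when $|D| < |O_p(G)|$ \emph{strictly}. But your steps (1)--(3) only give $|O_p(G)| \ge |D|$, and when equality holds $O_p(G)$ has no proper nontrivial subgroup of order $|D|$, so Skiba's lemma cannot be invoked at all. The paper's own proof runs precisely your argument (weakly $\mathcal{F}$-closed inside $O_p(G)$ implies normal in $G$, then Lemma \ref{lemma_skiba}, then Lemma \ref{criterion_FS_two}) as its step (3) --- but only to conclude that a minimal counterexample must have $|O_p(G)| = |D|$. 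Everything after that, which is the bulk of the proof, treats the case you have left open: it shows (using constrainedness and \cite[Proposition 8.8]{Hethelyi}) that $G$ is the model of $\mathcal{F}$, so $C_G(O_p(G)) \le O_p(G)$ and $O_{p'}(G)=1$; that $O_p(G)$ is elementary abelian; that every proper subgroup of $G$ containing $O_p(G)$ is $p$-closed; that $S/O_p(G)$ is cyclic, via the minimal non-$p$-closed analysis of \cite{LG} and \cite[Theorem 3.5]{Kappe}; that $O_p(G)$ is not maximal in $S$ (via Theorem \ref{third_main_result}) and has order at least $p^4$ (via Lemma \ref{Sylow_GL}); and it finally reaches a contradiction through the $\mathcal{A}_2$-group results of Lemma \ref{A2-groups} and Oliver's Lemma \ref{lemma_oliver}. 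None of this is optional: there is no cheap route to $O_p(G) \le Z_{\mathcal{U}}(G)$ when $|O_p(G)| = |D|$.

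Two further points. First, your parenthetical claim that realizability ``sidesteps the deeper classification-dependent input from \cite{Kappe}'' is exactly backwards: the paper's appeal to \cite[Theorem 3.5]{Kappe} occurs inside the proof of this very lemma (its step (6)), and the realizability hypothesis is what makes that group-theoretic machinery applicable; the short deduction of Theorem \ref{fourth_main_result} from this lemma is where the model theorem enters. Second, a minor but fixable slip: your step (1) assumes $|S \cap H| \ge |D|$, but invoking minimality of $G$ for $H$ requires a subgroup of order $|D|$ \emph{properly} between $1$ and $S \cap H$, i.e.\ $|S \cap H| > |D|$; in the application to $N_G(Q)$ this strict inequality does hold, since either $Q \ne S$ and normalizers grow in $p$-groups, giving $|N_S(Q)| > |Q| \ge |D|$, or $Q = S$ and $|N_S(Q)| = |S| > |D|$ because $D < S$.
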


\begin{proof}
Suppose that the lemma is false, and let $G$ be a minimal counterexample. We will derive a contradiction in several steps. 

\medskip 
(1) \textit{Let $H < G$ with $S \cap H \in \mathrm{Syl}_p(H)$ and $|S \cap H| > |D|$. Then $\mathcal{F}_{S \cap H}(H)$ is supersolvable.}

By hypothesis, any subgroup of $S \cap H$ with order $|D|$ is abelian and weakly $\mathcal{F}$-closed. Clearly, any weakly $\mathcal{F}$-closed subgroup of $S \cap H$ is weakly $\mathcal{F}_{S \cap H}(H)$-closed. Hence, any subgroup of $S \cap H$ with order $|D|$ is abelian and weakly $\mathcal{F}_{S \cap H}(H)$-closed. The minimality of $G$ implies that $\mathcal{F}_{S \cap H}(H)$ is supersolvable. 

\medskip 
(2) $C_G(O_p(G)) \le O_p(G)$.

Let $Q \in \mathcal{E}_{\mathcal{F}}^{*}$. Since $C_S(Q) \le Q$ and since any subgroup of $S$ with order $|D|$ is abelian, we have $|Q| \ge |D|$. This implies that $|N_S(Q)| > |D|$. Indeed, if $Q \ne S$, then $|N_S(Q)| > |Q| \ge |D|$, and if $Q = S$, then $|N_S(Q)| = |S| > |D|$. 

Assume that $Q$ is not normal in $G$. Hence $N_G(Q) < G$. We have $S \cap N_G(Q) = N_S(Q) \in \mathrm{Syl}_p(N_G(Q))$ since $Q$ is fully $\mathcal{F}$-normalized and $|S \cap N_G(Q)| = |N_S(Q)| > |D|$ by the preceding paragraph. So, by (1), the fusion system $N_{\mathcal{F}}(Q) = \mathcal{F}_{N_S(Q)}(N_G(Q))$ is supersolvable. 

If no member of $\mathcal{E}_{\mathcal{F}}^{*}$ is normal in $G$, then $N_{\mathcal{F}}(Q)$ is supersolvable for each $Q \in \mathcal{E}_{\mathcal{F}}^{*}$, whence $\mathcal{F}$ is supersolvable by Lemma \ref{criterion_FS_one}. Since this is not the case, there must exist some $Q \in \mathcal{E}_{\mathcal{F}}^{*}$ with $Q \trianglelefteq G$. Hence, $\mathcal{F}$ is constrained, and \cite[Proposition 8.8]{Hethelyi} shows that the model of $\mathcal{F}$ is isomorphic to $G/O_{p'}(G)$.  

If $O_{p'}(G) \ne 1$, then $\mathcal{F}_{SO_{p'}(G)/O_{p'}(G)}(G/O_{p'}(G))$ is supersolvable by the minimality of $G$, whence $\mathcal{F}$ is supersolvable. Thus $O_{p'}(G) = 1$. Consequently, $G$ is the model of $\mathcal{F}$, and so we have $C_G(O_p(G)) \le O_p(G)$, as wanted.  

\medskip
(3) $|O_p(G)| = |D|$. 

Assume that $|O_p(G)| > |D|$. By hypothesis, any subgroup of $O_p(G)$ with order $|D|$ is weakly $\mathcal{F}$-closed. Clearly, any weakly $\mathcal{F}$-closed subgroup of $O_p(G)$ is normal in $G$. Therefore, any subgroup of $O_p(G)$ with order $|D|$ is normal in $G$. So we have $O_p(G) \le Z_{\mathcal U}(G)$ by Lemma \ref{lemma_skiba}. If $H$ is a proper subgroup of $G$ with $O_p(G) < S \cap H$ and $S \cap H \in \mathrm{Syl}_p(H)$, then $\mathcal{F}_{S \cap H}(H)$ is supersolvable by (1). So, by Lemma \ref{criterion_FS_two}, $\mathcal{F}$ is supersolvable. This contradiction shows that $|O_p(G)| \le |D|$. 

Because of (2), we also have $|O_p(G)| \ge |D|$. Thus $|O_p(G)| = |D|$, as desired.  

\medskip
(4) \textit{$O_p(G)$ is elementary abelian.}

Assume that $O_p(G)$ is not elementary abelian. Then $\Phi(O_p(G)) \ne 1$ by \cite[Lemma 4.5]{Isaacs}. Set $\overline{G} := G/\Phi(O_p(G))$ and $\overline{\mathcal{F}} := \mathcal{F}/\Phi(O_p(G)) = \mathcal{F}_{\overline S}(\overline G)$. By hypothesis and by Lemma \ref{lemma_strongly_closed_1}, any subgroup of $\overline{S}$ with order $\frac{|D|}{|\Phi(O_p(G))|}$ is abelian and weakly $\overline{\mathcal{F}}$-closed. The minimality of $G$ implies that $\overline{\mathcal{F}}$ is supersolvable. 

By Lemma \ref{lemma_supersolvable_FS_2} (1), there is a series $\Phi(O_p(G)) = V_0 \le V_1 \le \dots \le V_m = S$ of subgroups of $S$ such that $\overline{V_i}$ is strongly $\overline{\mathcal{F}}$-closed for all $0 \le i \le m$, such that the quotient $\overline{V_{i+1}}/\overline{V_i}$ is cyclic for all $0 \le i < m$ and such that $V_j = O_p(G)$ for some $0 < j < m$. Since $\overline{V_i} \trianglelefteq \overline{G}$ for all $0 \le i \le j$, it follows that $\overline{O_p(G)} \le Z_{\mathcal{U}}(\overline{G})$. Applying \cite[Chapter 1, Theorem 7.19]{Weinstein}, we conclude that $O_p(G) \le Z_{\mathcal{U}}(G)$.



By (1) and (3), for any proper subgroup $H$ of $G$ with $O_p(G) < S \cap H$ and $S \cap H \in \mathrm{Syl}_p(H)$, the fusion system $\mathcal{F}_{S \cap H}(H)$ is supersolvable. Lemma \ref{criterion_FS_two} implies that $\mathcal{F}$ is supersolvable. This contradiction shows that $O_p(G)$ is elementary abelian. 

\medskip
(5) \textit{If $O_p(G) \le H < G$, then $H$ is $p$-closed.}

Let $O_p(G) \le H < G$. Without loss of generality, we assume that $S \cap H \in \mathrm{Syl}_p(H)$. Clearly, $O_p(G) \le S \cap H$, and if $O_p(G) = S \cap H$, then $H$ is $p$-closed. Assume now that $O_p(G) < S \cap H$. Then $\mathcal{E} := \mathcal{F}_{S \cap H}(H)$ is supersolvable by (1) and (3). So, by \cite[Proposition 2.3]{Su}, $S \cap H$ is normal in $\mathcal{E}$. Thus $\mathcal{E} = N_{\mathcal{E}}(S \cap H) = \mathcal{F}_{S \cap H}(N_H(S \cap H))$. For each $h \in H$, let $c_h$ denote the automorphism of $O_p(G)$ induced by conjugation with $h$, i.e. 
\begin{equation*}
c_h: O_p(G) \rightarrow O_p(G), x \mapsto x^h.
\end{equation*}
Let $h \in H$. Then $c_h$ is a morphism in $\mathcal{E} = \mathcal{F}_{S \cap H}(N_H(S \cap H))$, and so we have $c_h = c_u$ for some $u \in N_H(S \cap H)$. Then $hu^{-1} \le C_G(O_p(G)) \le O_p(G) \le N_H(S \cap H)$ by (2). It follows that $h \in N_H(S \cap H)$, and since $h$ was an arbitrarily chosen element of $H$, we can conclude that $S \cap H \trianglelefteq H$. Hence, $H$ is $p$-closed, as desired. 

\medskip
(6) \textit{$S/O_p(G)$ is cyclic.}

Set $\overline{G} := G/O_p(G)$. Let $O_p(G) \le H < G$. Then $H$ is $p$-closed by (5). Hence, $\overline{H}$ is $p$-closed. The group $\overline{G}$ is not $p$-closed because otherwise $S$ would be normal in $G$, which is not true because of (3). We have shown that $\overline{G}$ is a non-$p$-closed group all of whose proper subgroups are $p$-closed. In other words, $\overline{G}$ is minimal non-$p$-closed in the sense of \cite{LG}. Applying \cite[Lemma 1]{LG}, we conclude that $\overline{G}$ is minimal nonnilpotent or that $\overline{G}/\Phi(\overline{G})$ is nonabelian simple. 

If $\overline{G}$ is minimal nonnilpotent, then $\overline{S}$ is cyclic by Lemma \ref{lemma_minimal_non_nilpotent}. 

We assume now that $\widehat{G} := \overline{G}/\Phi(\overline{G})$ is nonabelian simple, and we show that $\overline{S}$ is cyclic also in this case. Since $\Phi(\overline{G})$ is nilpotent and $O_p(\overline{G}) = 1$, we have that $\Phi(\overline{G})$ is a $p'$-group. Thus, $\overline{S}$ is isomorphic to the Sylow $p$-subgroups of $\widehat{G}$, and therefore, it suffices to show that $\widehat{G}$ has cyclic Sylow $p$-subgroups. 

Let $O_p(G) \le L < G$ such that $\overline{L} = \Phi(\overline{G})$. If $L \le H < G$, then $H$ is $p$-closed by (5), whence $H/L$ is $p$-closed. Consequently, any proper subgroup of $G/L \cong \widehat{G}$ is $p$-closed. Since $\widehat{G}$ is simple, the only proper quotient of $\widehat{G}$ is $\widehat{G}/\widehat{G}$, which is of course $p$-closed. The simplicity of $\widehat{G}$ also implies that $\widehat{G}$ is not $p$-closed. We have shown that $\widehat{G}$ is a non-$p$-closed group all of whose proper subgroups and proper quotients are $p$-closed. Hence, $\widehat{G}$ is minimal non-$p$-closed in the sense of \cite{Kappe} (note that the definition of a minimal non-$p$-closed group used in \cite{Kappe} does not coincide with the one used in \cite{LG}). Applying \cite[Theorem 3.5]{Kappe}, we conclude that $\widehat{G}$ has cyclic Sylow $p$-subgroups, as required. 

\medskip
(7) \textit{$O_p(G)$ is not maximal in $S$.}

Assume that $O_p(G)$ is maximal in $S$. Then, by (3) and by hypothesis, any maximal subgroup of $S$ is abelian and weakly $\mathcal{F}$-closed. Theorem \ref{third_main_result} implies that $\mathcal{F}$ is supersolvable, which is a contradiction. Hence, $O_p(G)$ is not maximal in $S$. 

\medskip
(8) $|O_p(G)| \ge p^4$. 

By (2) and (4), we have $C_S(O_p(G)) = O_p(G)$. Hence, $S/O_p(G)$ is isomorphic to a $p$-subgroup of $\mathrm{Aut}(O_p(G))$. By (6), $S/O_p(G)$ is cyclic. We have $|S/O_p(G)| \ge p^2$ since $O_p(G) \ne S$ by (3) and since $O_p(G)$ is not maximal in $S$ by (7). 

Now, let $e \in \mathbb{N}$ such that $|O_p(G)| = p^e$. We have to show that $e \ge 4$. As $|O_p(G)| = |D| > 1$ by (3), we have $e \ne 0$. Since $O_p(G)$ is elementary abelian by (4), we have $\mathrm{Aut}(O_p(G)) \cong GL_e(p)$. If $e = 1$, then $\mathrm{Aut}(O_p(G)) \cong GL_1(p) \cong C_{p-1}$ does not have any nontrivial $p$-subgroups, and if $e \in \lbrace 2,3 \rbrace$, then we see from Lemma \ref{Sylow_GL} (2) that $\mathrm{Aut}(O_p(G)) \cong GL_e(p)$ does not have any cyclic $p$-subgroups of order greater than $p$. Since $S/O_p(G)$ is isomorphic to a cyclic $p$-subgroup of $\mathrm{Aut}(O_p(G))$ of order at least $p^2$, it follows that $e \ge 4$, as required.  

\medskip
(9) \textit{The final contradiction.} 

Since $|S/O_p(G)| \ge p^2$ by (3) and (7), there exists $O_p(G) < T \le S$ with $|T/O_p(G)| = p^2$. Then $O_p(G)$ is properly contained in a maximal subgroup $T_1$ of $T$, and since $C_{T_1}(O_p(G)) \le C_G(O_p(G)) \le O_p(G)$ by (2), we have that $T_1$ is nonabelian. By (3) and by hypothesis, any subgroup of $T$ with index $p^2$ is abelian. Hence, $T$ is an $\mathcal{A}_2$-group in the sense of the definition given before Lemma \ref{A2-groups}. We have $|T| > |O_p(G)| \ge p^4$ by (8). 

Assume that $T$ is metacyclic. Then $O_p(G)$ is metacyclic as well. At the same time, $O_p(G)$ is elementary abelian by (4). It is rather easy to see that an elementary abelian finite $p$-group can only be metacyclic when its order is at most $p^2$. So it follows that $|O_p(G)| \le p^2$. This contradicts (8), and therefore, $T$ is not metacyclic. 

Assume that $T$ has no abelian maximal subgroups. Then $|T| = p^5$ by Lemma \ref{A2-groups} (2) and hence $|O_p(G)| = p^3$, which is a contradiction to (8). Thus, there exists an abelian maximal subgroup of $T$. 

Assume that there is only one abelian maximal subgroup of $T$, say $U$. We claim that $T' \le Z(T)$. Because of (2), we have $O_p(G) \not\le U$ and hence $T = O_p(G)U$. Set $Z := O_p(G) \cap U$. Since $O_p(G)$ and $U$ are abelian, the subgroup $Z$ is centralized by both $O_p(G)$ and $U$. This implies that $Z \le Z(T)$. We have $O_p(G)/Z \cong O_p(G)U/U = T/U \cong C_p$, and so $Z$ is maximal in $O_p(G)$. As a consequence of (2), $Z(T)$ is a proper subgroup of $O_p(G)$. So we have $Z \le Z(T) < O_p(G)$, and the maximality of $Z$ in $O_p(G)$ implies that $Z = Z(T)$. Therefore, $O_p(G)/Z(T)$ is a normal subgroup of $T/Z(T)$ with order $p$ and hence a central subgroup of $T/Z(T)$. The corresponding factor group $(T/Z(T))/(O_p(G)/Z(T)) \cong T/O_p(G)$ is cyclic by (6). It follows that $T/Z(T)$ is abelian. Thus $T' \le Z(T)$, as claimed above. Applying Lemma \ref{A2-groups} (1), we conclude that $\Phi(T) = Z(T) < O_p(G)$. So $T/O_p(G)$ is elementary abelian by \cite[Lemma 4.5]{Isaacs}, and since $|T/O_p(G)| = p^2$, it follows that $T/O_p(G)$ is not cyclic. This is a contradiction to (6). Therefore, $T$ has more than one abelian maximal subgroup. 

Now, applying \cite[Lemma 1.9]{Oliver}, we conclude that $Z(T)$ has index $p^2$ in $T$. Since $O_p(G)$ also has index $p^2$ in $T$ and $Z(T) \le C_G(O_p(G)) \le O_p(G)$ by (2), it follows that $O_p(G) = Z(T)$. In particular, we have $C_G(O_p(G)) \not\le O_p(G)$. This contradicts (2), and the proof is complete with this contradiction. 
\end{proof}

With Lemma \ref{lemma_Thm_D} at hand, we are able to prove Theorem \ref{fourth_main_result} in few lines. Our proof strongly relies on the model theorem \cite[Part III, Theorem 5.10]{AKO}.

\begin{proof}[Proof of Theorem \ref{fourth_main_result}]
Suppose that the theorem is false, and let $\mathcal{F}$ be a counterexample such that $|\mathcal{F}|$ is minimal. 

Let $Q$ be a member of $\mathcal{E}_{\mathcal{F}}^{*}$ which is not normal in $\mathcal{F}$. Since $C_S(Q) \le Q$ and since any subgroup of $S$ with order $|D|$ is abelian, we have $|Q| \ge |D|$. This implies that $|N_S(Q)| > |D|$. Hence, $N_{\mathcal{F}}(Q)$ is a proper saturated subsystem of $\mathcal{F}$ on $N_S(Q)$ satisfying the hypotheses of the theorem. Thus, $N_{\mathcal{F}}(Q)$ is supersolvable by the minimality of $\mathcal{F}$. 

If no member of $\mathcal{E}_{\mathcal{F}}^{*}$ is normal in $\mathcal{F}$, then $\mathcal{F}$ is supersolvable by the preceding paragraph and Lemma \ref{criterion_FS_one}. Therefore, there must be a member of $\mathcal{E}_{\mathcal{F}}^{*}$ which is normal in $\mathcal{F}$. Hence, $\mathcal{F}$ is constrained. The model theorem \cite[Part III, Theorem 5.10]{AKO} implies that there is a finite group $G$ with $S \in \mathrm{Syl}_p(G)$ and $\mathcal{F} = \mathcal{F}_S(G)$. Then Lemma \ref{lemma_Thm_D} implies that $\mathcal{F}$ is supersolvable. This contradiction completes the proof. 
\end{proof}

\noindent\textbf{Declarations of interest:} none.

\end{document}